\DeclareMathOperator{\Mod}{MCG}
\DeclareMathOperator{\PMod}{PMCG}
\DeclareMathOperator{\Cantor}{Cantor}
\begin{document}

\newtheorem{theorem}{Theorem}[subsection]
\newtheorem{lemma}[theorem]{Lemma}
\newtheorem{corollary}[theorem]{Corollary}
\newtheorem{conjecture}[theorem]{Conjecture}
\newtheorem{proposition}[theorem]{Proposition}
\newtheorem*{proposition*}{Proposition}
\newtheorem*{question}{Question}
\newtheorem*{answer}{Answer}
\newtheorem{problem}[theorem]{Problem}
\newtheorem*{simplex_theorem}{Superideal Simplex Theorem}
\newtheorem*{claim}{Claim}
\newtheorem*{criterion}{Criterion}
\newtheorem*{theorem*}{Theorem}
\newtheorem*{lemma*}{Lemma}
\newtheorem*{corollary*}{Corollary}
\theoremstyle{definition}
\newtheorem{definition}[theorem]{Definition}
\newtheorem{construction}[theorem]{Construction}
\newtheorem{notation}[theorem]{Notation}
\newtheorem{convention}[theorem]{Convention}
\newtheorem*{warning}{Warning}
\newtheorem*{assumption}{Simplifying Assumptions}

\theoremstyle{remark}
\newtheorem{remark}[theorem]{Remark}
\newtheorem{example}[theorem]{Example}
\newtheorem{scholium}[theorem]{Scholium}
\newtheorem*{case}{Case}

\def\Id{\text{Id}}
\def\H{\mathbb H}
\def\Z{\mathbb Z}
\def\N{\mathbb N}
\def\R{\mathbb R}
\def\C{\mathbb C}
\def\CP{{\mathbb {CP}}}
\def\CC{\mathcal C}
\def\HC{\mathcal H}
\def\S{\mathcal S}
\def\P{\mathcal P}
\def\Sph{\mathbb S}
\def\P{\mathcal P}
\def\Q{\mathbb Q}
\def\L{L}
\def\A{\mathcal A}
\def\E{\mathcal E}
\def\U{\mathcal U}
\def\W{\mathcal W}
\def\I{\mathbb I}
\def\Homeo{\textnormal{Homeo}}
\def\inte{\textnormal{int}}
\def\scl{\textnormal{scl}}
\def\loc{\textnormal{loc}}
\def\RG{{\mathcal{R}'}}
\def\RGC{\mathcal{R}}

\def\Or{{\mathcal{O}}}

\newcommand{\marginal}[1]{\marginpar{\tiny #1}}

\title[Two simultaneous actions of big mapping class groups]{Two simultaneous actions\\
of big mapping class groups}
\author{Juliette Bavard}
\address{Univ Rennes, CNRS, IRMAR - UMR 6625, F-35000 Rennes, France}
\email{juliette.bavard@univ-rennes1.fr}
\author{Alden Walker}
\address{Center for Communications Research \\ La Jolla, CA 92121 \\ USA}
\email{akwalke@ccrwest.org}

\begin{abstract} We study two actions of big mapping class groups.
The first is an action by isometries on a Gromov-hyperbolic graph. The second is an action by homeomorphisms on a circle in which the vertices of the graph naturally embed. 

The first two parts of the paper are devoted to the definition of objects and tools needed to introduce these two actions; in particular, we define and prove the existence of \emph{equators} for infinite type surfaces, we define the hyperbolic graph and the circle needed for the actions, and we describe the Gromov-boundary of the graph using the embedding of its vertices in the circle. 

The third part focuses on some fruitful relations between the dynamics of the two actions. For example, we prove that loxodromic elements (for the first action) necessarily have rational rotation number (for the second action). In addition, we are able to construct non trivial quasimorphisms on many subgroups of big mapping class groups, even if they are not acylindrically hyperbolic. 

\end{abstract}

\maketitle

\setcounter{tocdepth}{1}
\tableofcontents

\section{Introduction}

\subsection{Context}  
\subsubsection{Dynamics} The study of mapping class groups of infinite type surfaces is motivated by various dynamical questions related to group actions on surfaces, one-variable complex dynamics, foliations of $3$-manifolds, subgroups of $\Homeo^+(\Cantor)$, etc (more details on these motivations are given by D. Calegari in~\cite{Calegari-blog}). These ``big mapping class groups'' are uncountable and thus very different from the mapping class groups of finite type surfaces. However, a good understanding of these groups and their subgroups would be useful to approach some basic questions for which only a few techniques exist. Here is an example of such a question from K. Mann:
\begin{question}
Does there exist a finitely generated group which is torsion free and which does not act on the plane by homeomorphisms?
\end{question}
As far as we know, no example of such a group is known: this illustrates how difficult it is to construct obstructions to such actions. Nevertheless, infinite type surfaces appear when we remove certain orbits of a group action on a finite type surface. The associated big mapping class groups are a possible fruitful approach to the search of obstructions to the action. A concrete example can be found in \cite{Calegari-circular}, where D. Calegari studies mapping class groups of complements of compact, totally disconnected subsets of the plane. He uses these groups to prove in particular that any group of orientation preserving $\CC^1$-diffeomorphisms of the plane with a bounded orbit is circularly orderable.

In this setting, it becomes very useful to know which subgroups of big mapping class groups admit non trivial quasimorphisms. Indeed, the existence of such quasimorphisms gives specific obstructions on some group actions on surfaces. The space of non trivial quasimorphisms $\tilde Q(G)$ of a group $G$ is a group invariant which pulls back through surjections: if there exists a surjective homomorphism from a group $G$ to a group $H$, then $\tilde Q(G)$ contains a copy of $\tilde Q(H)$. Calegari's construction gives us surjective homomorphisms from groups acting on surfaces to subgroups of (big) mapping class groups. It follows that the existence of quasimorphisms on subgroups of big mapping class groups is an obstruction to some actions of groups without quasimorphisms.

\subsubsection{Gromov-hyperbolic spaces}

Gromov-hyperbolic graphs associated to a surface $S$ are useful tools to study the mapping class group $\Mod(S)$ of~$S$ and its subgroups: an action by isometries on such a graph gives access to the machinery of geometric group theory to determine properties of the group. For finite topological surfaces, i.e. compact surfaces possibly with finitely many punctures, the curve graph and the arc graph of the surface are well studied Gromov-hyperbolic spaces which give information about $\Mod(S)$ and its subgroups. However, these curve and arc graphs have finite diameter for infinite type surfaces. The ray graph of the plane minus a Cantor set was originally defined by D. Calegari in~\cite{Calegari-blog}, and the first author proved in \cite{Juliette} that it is Gromov-hyperbolic and has infinite diameter. Gromov-hyperbolic graphs related to infinite type surfaces have then also been studied by J. Aramayona, A. Fossas and H. Parlier in \cite{Aramayona-F-P}, by J. Aramayona and F. Valdez in~\cite{Aramayona-V}, by M. Durham, F. Fanoni and N. Vlamis in~\cite{Durham-F-V}, and by A. Rasmussen in  \cite{Rasmussen}. 

\subsubsection{Quasimorphisms}
While it is a first step to have a Gromov-hyperbolic graph on which a group $G$ acts with loxodromic elements, this alone is not enough to prove the existence of non trivial quasimorphisms on $G$. In \cite{Bestvina-Fujiwara}, M. Bestvina and K. Fujiwara give an additional criterion which certifies the existence of non trivial quasimorphisms on $G$: they prove that whenever $G$ has a \emph{weakly properly discontinuous} (WPD) action on the graph, then the space of non trivial quasimorphisms on $G$ is infinite dimensional. However, the first author and A. Genevois proved in \cite{Juliette-Anthony} that big mapping class groups are not acylindrically hyperbolic, and thus have no WPD action on any graph. Hence we need new techniques to prove that the big mapping class groups under consideration (and some of their subgroups) admit non trivial quasimorphisms. We will use a weaker criterion, given again by Bestvina-Fujiwara in \cite{Bestvina-Fujiwara}, which guarantees the existence of non trivial quasimorphisms whenever the action of $G$ has two \emph{"anti-aligned"} loxodromic elements: roughly speaking, two loxodromic elements $g$ and $h$ are \emph{anti-aligned} if no element of $G$ maps long segments of the quasi-axis of $g$ to a close neighborhood of a quasi-axis of $h$. We will prove the existence of such elements in big mapping class groups in order to get non trivial quasimorphisms. Note that our methods are very different from the one given in \cite{Juliette}, were we used the description of the neighborhood of one particular axis on a particular loxodromic element of $\Mod(\R^2 -\Cantor)$ to prove the \emph{anti-aligned criterion}: here we use together a description of the Gromov-boundary of the graph and the action by homeomorphisms on the circle, and we are able to construct quasimorphisms on many more groups.

\subsection{Results} 

Our main goal in this paper is to generalize and extend
the results of~\cite{Juliette} and~\cite{boundary} to
the case of more general infinite type surfaces.
That is, to understand the structure of
mapping class groups of infinite type surfaces which fix an isolated marked puncture.
If $S$ is a surface (we define \emph{surface} below and in
Section~\ref{sec:surfaces}), and $p\in S$ is a marked puncture,
then we define $\Mod(S;p)$ to be the group of homeomorphisms
of $S$ which fix $p$, taken up to isotopy.

\subsubsection{Equators for infinite type surfaces}

One of the most important tools in~\cite{boundary} was
an obvious equator in $S^2 - \Cantor$: a locally finite
family of mutually disjoint and proper simple arcs whose complement
is two topological disks.  The equator is essentially
equivalent to a nice fundamental domain.
For general surfaces, the existence of an equator is nontrivial,
and is our first result.  We remark that even the decision
about what we mean by a surface and how we
represent them up to homeomorphism are not obvious.
However, fortunately we can rely on the background work
in~\cite{Richards}, which classifies our surfaces of interest
up to homeomorphism, where surface means an triangulable
$2$-manifold without boundary (but with punctures), and
we will only be interested in the cases of infinite type or
negative Euler characteristic.
Our first result produces a nice fundamental domain,
and we use its boundary as our equator:

\begin{theorem*}{\textbf{\emph{\ref{thm:hyperbolic}}}}
Let $S$ be a surface with at least one end which is not a disk or
annulus.  Then $S$ admits a complete hyperbolic metric of the first
kind, and there is a fundamental polygon for $S$ which is geodesic
and has no vertex in the interior of the hyperbolic plane.
Further, every isolated puncture is a vertex on this fundamental polygon,
and for any particular isolated puncture, the polygon can be chosen so that
the Fuchsian group for the polygon has a parabolic element fixing
the puncture.
\end{theorem*}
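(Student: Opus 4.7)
My plan is to prove the theorem in three steps corresponding to the three assertions. First, I would construct a complete hyperbolic metric of the first kind on $S$ via an infinite pants decomposition. Under the hypothesis that at least one end of $S$ is not a disk or annulus, Richards' classification allows $S$ to be written as a countable, locally finite union of pairs of pants glued along cuffs, where each isolated puncture becomes a cusp cuff. Putting the unique hyperbolic structure on each pair of pants with the specified cuff lengths, and choosing arbitrary twist parameters at the gluings, yields a complete hyperbolic metric on $S$. Because every cuff is either a cusp or shared between two pairs of pants (no funnels), the limit set of the resulting Fuchsian group is all of $\partial \H^2$, so the metric is of the first kind.

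Second, for the fundamental polygon, I would choose in each pair of pants a locally consistent system of three disjoint geodesic seams that spiral into each closed cuff (or run from cusp to cusp as complete geodesic arcs when the cuffs are cusps), splitting each pants into two ideal triangles or their degenerations. The union of these seams across all pants forms a locally finite graph $\Gamma \subset S$ whose complement is a disjoint union of ideal polygons. Lifting to the universal cover $\H^2$, one obtains an ideal tessellation, and a fundamental polygon is assembled by selecting one polygon per orbit and gluing along a spanning tree of the dual graph of the tessellation. All vertices of this polygon lie on $\partial \H^2$---at parabolic fixed points for cusps and at endpoints of cuff axes for closed cuffs---so there is no interior vertex, and every isolated puncture, being a cusp, corresponds to at least one vertex.

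Third, for the final clause, given a specific isolated puncture $p$, I would arrange the pants decomposition of Step 1 so that $p$ is a cusp of some pair of pants. Then the holonomy around $p$ is parabolic, and the corresponding vertex of the polygon on $\partial \H^2$ is fixed by that parabolic element. The main obstacle I anticipate lies in the second step: ensuring that the spiraling seams can be chosen in a coherent, locally finite way across infinitely many pants, and that their complement really consists of disjoint ideal polygons with all vertices on $\partial \H^2$ rather than accumulating in the interior of $S$. The standard constructions for finite-type surfaces (e.g.\ those based on maximal geodesic laminations in the style of Bonahon or Penner) would need to be adapted to the infinite-type setting, likely by exhausting $S$ by finite subsurfaces and building the seam system inductively, checking at each stage that the previously constructed arcs extend without accumulation.
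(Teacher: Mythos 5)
Your first step (the infinite pants decomposition with uniformly bounded cuff lengths, completeness via local finiteness, and ``first kind'' because there are no funnels) is essentially the paper's argument, which runs the same construction through an explicit rooted core tree embedded in $\R^3$ and verifies ``first kind'' via a uniform upper bound on the injectivity radius. That part is fine.

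The gap is in your second step. Seams that spiral into a closed cuff accumulate onto that cuff, so the union of all seams is \emph{not} a locally finite graph in $S$; more seriously, the resulting ideal triangulation has a disconnected dual graph. The two ideal triangles inside a given pair of pants share the three spiraling seams of that pair of pants, but a triangle in $P_1$ and a triangle in an adjacent pair of pants $P_2$ share \emph{no} common side: the seams of $P_1$ spiral into the common cuff $c$ from one side and the seams of $P_2$ from the other, and in the universal cover any lift of a $P_1$-triangle is separated from any lift of a $P_2$-triangle by the lift of $c$ together with infinitely many accumulating lifts of seams. Hence a spanning tree of the dual graph never leaves a single pair of pants, and your gluing procedure cannot assemble a connected fundamental domain for $S$. (Using the compact perpendicular seams instead would reconnect the dual graph but reintroduces hexagon vertices in the interior of $\H^2$, which the theorem forbids.) The paper circumvents this by cutting along \emph{proper} simple arcs that run from end to end of the surface and cross the cuffs transversally --- this is exactly where the hypothesis that $S$ has at least one end is used --- chosen (Lemma~\ref{lem:proper_arcs}) so that their complement is precisely two disks, and then realized by geodesics via an ambient-isotopy argument (Lemma~\ref{lem:isotopy_of_arcs}). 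The fundamental polygon is the union of the two disks glued along one arc, all of whose vertices are ideal; choosing the gluing arc to be one of the two arcs entering the distinguished cusp $p$ makes the side pairing at $p$ a parabolic of the Fuchsian group, which is the precise content of the last clause (your appeal to ``the holonomy around $p$ is parabolic'' gives a parabolic in the deck group but does not by itself exhibit it as a side-pairing of the chosen polygon).
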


Along the way to proving Theorem~\ref{thm:hyperbolic}, we
give a concrete, useful parameterization of any surface and show
how to cut it into two topological disks with a carefully chosen
collection of proper simple arcs.

\subsubsection{Definition of the two actions}

The second part of the paper deals with Gromov-hyperbolic graphs of surfaces of finite or infinite type and with an isolated marked puncture and their Gromov boundaries. In~\cite{boundary}, we gave a concrete description of the Gromov boundary of the loop graph of the plane minus a Cantor set.  Namely, we showed it is homeomorphic, with suitable topologies, to the set of cliques in a graph containing the loop and ray graphs.

In this paper, we generalize results of~\cite{boundary} to
our more general class of surfaces.  In particular, we define
the loop graph $L(S;p)$ of a surface $S$ based at a puncture $p$
and show it is hyperbolic and that $\Mod(S;p)$ acts on it
by isometries.  This is the first action.

To better understand the loop graph, we define a specific cover of the surface which focuses on the distinguished puncture:  the \emph{conical cover}, which is the cover we obtain by quotienting the universal
cover by the parabolic about the puncture. The boundary of this cover is a circle $S^1$, in which the loop graph naturally embeds. Moreover, the mapping class group of the surface naturally acts by homeomorphisms on this circle.  This is the second
action.

In addition to loops on the surface, we also study geodesic rays; these
are easy to understand from the perspective of the conical cover: a
ray is the projection of the geodesic ray from $p$ to a point on the
boundary $S^1$ of the conical cover, where this ray is required to
be simple.  We can define a more general graph $\RGC$ which contains
the loops and rays, and on which $\Mod(S;p)$ also acts by isometries.
Some rays are special: they are \emph{high-filling}, which means
they are not in the connected component of any loop.
We prove results analogous to~\cite{boundary}; namely, that
these long rays appear in cliques, and the cliques
are in bijection with the Gromov boundary of the loop graph
(Theorem~\ref{theorem:boundary_bijection}).  Moreover, 
with the topology that these rays inherit from (a quotient of)
the circle, they are homeomorphic to the Gromov boundary of the
loop graph (Theorem~\ref{theorem:boundary_homeo}).  We omit
the precise statements of the theorems because they involve
a surplus of notation.

Every loxodromic element $h \in \Mod(S;p)$ has two fixed
points on the boundary of $L(S;p)$; with the above correspondence,
we get two preserved cliques of high-filling rays. When $S$
is a finite type surface and $h$ is a pseudo-Anosov, then $h$
has a loxodromic action on $\L(S;p)$ and its associated
cliques are the set of leaves of the attracting and
repelling foliations preserved by $h$ that end at the
prong singularity $p$. Thus, in this case, we can construct
the two usual preserved laminations associated to $h$ by
taking the closure of our cliques. However, loxodromic elements on
general surfaces do not necessarily preserve any finite type 
subsurface (see for example the element $h$ constructed in
Section $4.1$ of \cite{Juliette}), so minimal filling laminations
of finite type subsurfaces are not enough to describe the
whole boundary of loop graphs of infinite type surfaces.

\subsubsection{Simultaneous dynamics of the actions and applications}

In the third part of the paper, we study links between the two
actions of $\Mod(S;p)$. On one hand, we can associate to
every loxodromic element two cliques of high-filling rays,
each of them representing a point of the Gromov-boundary of the
loop graph. On the other hand, every element also acts as a
homeomorphism of the circle (seen as the boundary of the conical cover).
Using this action on the circle, we prove the following result:

\begin{theorem*}{\textbf{\emph{\ref{theorem:finite cliques}.}}} Let $S$ be a surface with an isolated marked puncture $p$.
Let $h \in \Mod(S;p)$ be a loxodromic element. The cliques $\CC^-(h)$ and $\CC^+(h)$ of high-filling rays associated to $h$ are finite and have the same number of elements.
Moreover, if we identify $S^1$ as the boundary of the conical cover,
then there exists $k\in \N$ such that the action of $h^k$ on $S^1$
has a Morse-Smale dynamics with fixed points exactly the
high-filling rays of the attractive and repulsive cliques.
\end{theorem*}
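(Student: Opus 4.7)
Since $h$ is loxodromic on the Gromov-hyperbolic graph $\L(S;p)$, it has exactly two fixed points $\xi_+,\xi_-$ on $\partial \L(S;p)$. By Theorem~\ref{theorem:boundary_bijection}, these correspond to the two cliques $\CC^+(h)$ and $\CC^-(h)$ of high-filling rays, and because $h$ acts continuously on $\partial \L(S;p)$ fixing $\xi_\pm$, it preserves each clique as a set. In particular, the homeomorphism induced by $h$ on $S^1$ (the boundary of the conical cover) restricts to a permutation of the set of endpoints of each clique.

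\textbf{Finiteness of the cliques.} To show $\CC^+(h)$ and $\CC^-(h)$ are finite I would argue by contradiction. A clique is a pairwise compatible family of high-filling rays, which in the conical cover are realised as pairwise disjoint simple geodesic rays from the lift of $p$, whose endpoints are cyclically ordered on $S^1$. If an infinite such family $\{r_i\}$ existed, its endpoints would accumulate at some $z\in S^1$; selecting two rays $r_i,r_j$ whose endpoints are very close to $z$ and joining short subarcs of them by a loop $\ell$ encircling a nontrivial end of the complement should produce a loop which is disjoint from all but finitely many of the $r_n$. Such an $\ell$ would then be a common loop-graph neighbor of infinitely many rays of the clique, contradicting the high-filling hypothesis (high-filling rays have no loop neighbor in $\RGC$). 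I expect this combinatorial step to be the main obstacle of the proof; it requires careful use of the conical cover and of the description of $\partial\L(S;p)$ from Theorem~\ref{theorem:boundary_homeo} to rule out accumulation.

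\textbf{North-South dynamics and Morse-Smale structure.} With finiteness in hand, choose $k\in\N$ so that $h^k$ pointwise fixes the finite set $F:=\CC^+(h)\cup\CC^-(h)\subset S^1$. Loxodromicity gives North-South dynamics on $\L(S;p)\cup\partial\L(S;p)$: every point different from $\xi_-$ converges to $\xi_+$ under forward iteration of $h$, and dually for backward iteration. Via the homeomorphism of Theorem~\ref{theorem:boundary_homeo}, the $h$-orbit of any vertex of $\L(S;p)$ therefore accumulates on $\CC^+(h)$ forward in time and on $\CC^-(h)$ backward in time; since vertices of $\L(S;p)$ form a dense subset of $S^1\setminus F$ and $h^k$ is a continuous orientation-preserving homeomorphism of $S^1$, the same is true for every point of $S^1\setminus F$. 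On each of the finitely many open arcs of $S^1\setminus F$, the restriction of $h^k$ is then a fixed-point-free monotone homeomorphism; forward/backward convergence of orbits to $F$ forces each such arc to join a point of $\CC^-(h)$ (source) to a point of $\CC^+(h)$ (sink) in the cyclic order. Consequently $\CC^+(h)$ and $\CC^-(h)$ alternate around $S^1$, which simultaneously gives $|\CC^+(h)|=|\CC^-(h)|$ and produces the claimed Morse-Smale dynamics of $h^k$ on $S^1$ with fixed-point set exactly $F$.
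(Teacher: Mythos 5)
Your overall skeleton (cliques are $h$-invariant compact subsets of $S^1$; pass to a power $h^k$; analyze the complementary arcs) points in the right direction, but the two load-bearing steps both have genuine gaps.

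First, the finiteness argument cannot work as proposed. You want to build a loop $\ell$ that is disjoint from (infinitely many of) the rays $r_n$ of the clique and then contradict the fact that high-filling rays have no loop neighbours. But a high-filling ray intersects \emph{every} loop by definition, so the loop you are trying to construct does not exist, for a finite clique or an infinite one; accumulation of endpoints at $z\in S^1$ gives you no way to produce it, and no contradiction is ever derived. Worse, if some version of this construction did succeed it would prove that \emph{every} clique of high-filling rays is finite, with no reference to $h$ at all --- a much stronger statement than the theorem, and not one the paper establishes (Lemma~\ref{lemma:cliques_compact} only gives compactness). The paper's finiteness proof genuinely uses the loxodromic dynamics: disjointness plus compactness of $\CC^{\pm}(h)$ shows there are only finitely many complementary arcs with one endpoint in each clique; then, for a closed complementary interval $I_e$ with both endpoints in (say) $\CC^-(h)$ and $h^k(I_e)=I_e$, one takes a loop $\ell$ whose endpoint lies in the interior of $I_e$ and observes that $h^{nk}(\ell)$ must converge to a point of $\CC^+(h)$ inside $I_e$, which is impossible; hence $I_e$ is a single point.

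Second, your Morse--Smale step leans on the claim that vertices of $\L(S;p)$ are dense in $S^1\setminus F$. Endpoints of \emph{simple} loops are not obviously dense there; what is actually needed (and is the technical heart of the paper's proof, Lemma~\ref{lemma:loop_approximation}) is that every high-filling ray is accumulated on both sides by endpoints of simple loops. The paper explicitly flags this as nontrivial: one must follow $\lambda$ for a long time and then close up into a loop that is guaranteed to be simple and to land on the correct side, which requires the non-preperiodicity statements of Lemma~\ref{lemma:high_filling_equator_sequence}. Once you have a loop in each complementary interval $I_d$ and know $h^k(I_d)=I_d$, your source-sink/alternation conclusion is essentially the paper's; but without that approximation lemma the argument that each arc carries the claimed dynamics, and indeed the finiteness argument above, both fail.
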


See Figure~\ref{figu:action_boundary_intro} for the picture
of the two actions in Theorem~\ref{theorem:finite cliques}.

\begin{figure}[htb]
\labellist
\pinlabel $h$ at 160 210
\pinlabel $\textrm{Loop graph }\L(S;p)$ at 200 -40
\pinlabel $\textrm{Boundary of the conical cover}$ at 750 -40
\endlabellist
\centering
\includegraphics[scale=0.3]{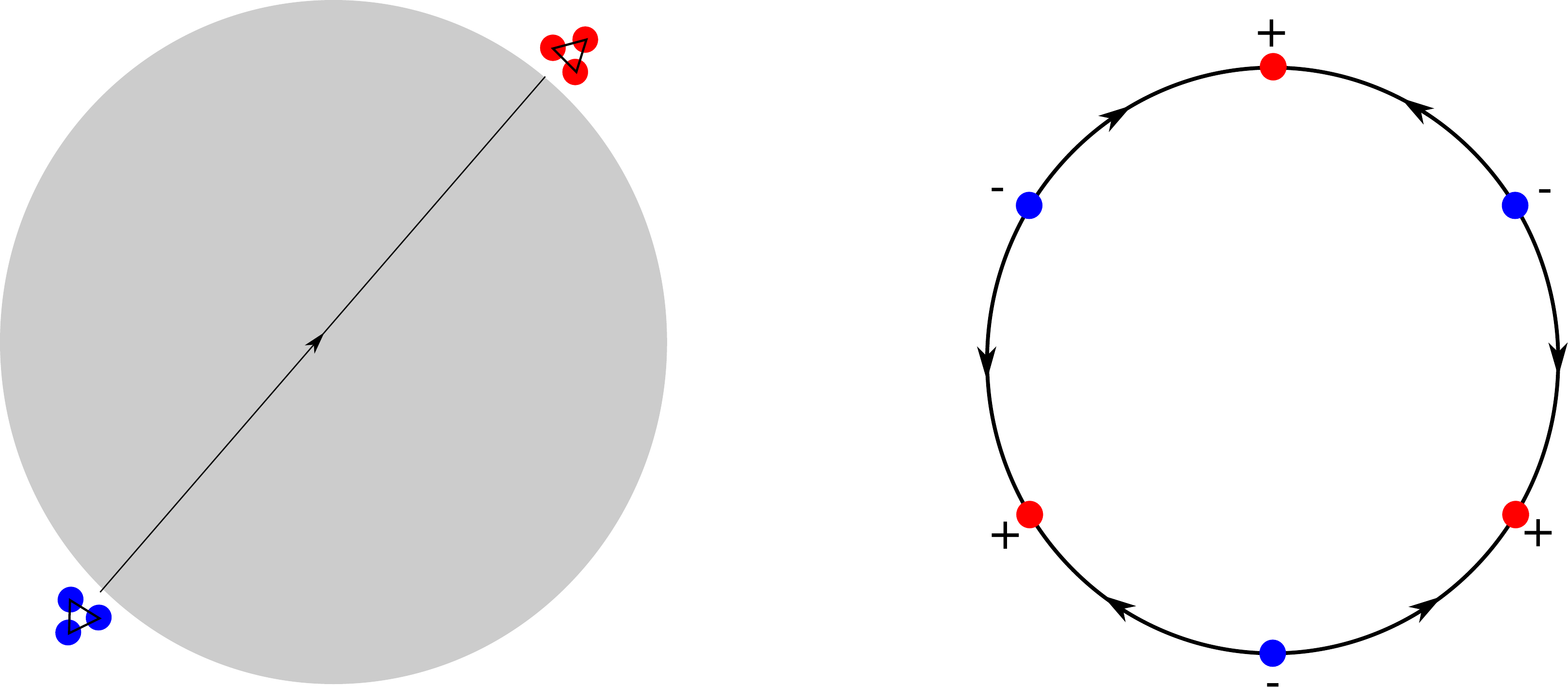}
\vspace{1cm}
\caption{On the left, the action of a loxodromic element $h$ on the loop graph: the two fixed points on the boundary are cliques. On the right, the action of $h^n$ (where $n$ is the cardinal of the associated cliques) on the boundary of the conical cover: the elements of the two attractive and repulsive cliques associated to $h$ gives the attractive and repulsive points for this circular action.}
\label{figu:action_boundary_intro}
\end{figure}

In particular, as homeomorphisms of the circle, loxodromic elements have a rational rotation number.
After Theorem~\ref{theorem:finite cliques}, we can define the
\emph{weight} of a loxodromic element as the cardinality of
each of its associated cliques. For example, the weight of the
element in the cartoon of Figure~\ref{figu:action_boundary_intro}
is $3$. We show that on an infinite type surface, any weight
is possible:

\begin{theorem*}{\textbf{\emph{\ref{theo:weights}.}}}
Let $S$ be an infinite type surface with an isolated marked puncture $p$. For any $n\in \N$,
there exist elements of $\Mod(S;p)$ with a loxodromic action on the loop graph
$\L(S;p)$ whose weight is equal to $n$.
\end{theorem*}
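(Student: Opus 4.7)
The plan is to construct, for each $n \in \N$, an explicit element $h \in \Mod(S;p)$ acting loxodromically on $\L(S;p)$ with weight exactly $n$, by exploiting an $n$-fold rotationally symmetric configuration around $p$. By Theorem~\ref{theorem:finite cliques}, it is enough to produce such an $h$ whose circle action on $S^1$ (the boundary of the conical cover) cyclically permutes $n$ attracting and $n$ repelling high-filling rays. Since $S$ has infinite type, one can embed in $S \setminus \{p\}$ a family of $n$ pairwise disjoint homeomorphic finite-type subsurfaces $\Sigma_1, \dots, \Sigma_n$, joined to $p$ by pairwise disjoint proper simple arcs $\alpha_1, \dots, \alpha_n$ arranged cyclically in a small disk neighborhood of $p$. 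By the change of coordinates principle for big mapping class groups, there is an element $r \in \Mod(S;p)$ of order $n$ cyclically permuting the pairs $(\Sigma_i, \alpha_i)$.

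On the fundamental piece $P := \Sigma_1 \cup \alpha_1$ (enlarged by a disk sector of $p$ incident to $\alpha_1$), I would construct a pseudo-Anosov $\phi$, extended by the identity, whose invariant singular foliations have a one-prong singularity at $p$ entering along $\alpha_1$; denote by $\rho^+$ and $\rho^-$ the corresponding attracting and repelling rays at $p$, and set $h := r\phi$. The $n$ conjugates $r^k \phi r^{-k}$ are pseudo-Anosovs with pairwise disjoint supports $r^k P$, hence commute pairwise, and a direct expansion gives $h^n = \phi \cdot (r^{-1}\phi r) \cdots (r^{-(n-1)}\phi r^{n-1})$. Consequently, $h^n$ fixes each of the $2n$ rays $\{r^k \rho^\pm\}$ on $S^1$, while $h$ itself cycles them with rotation number $1/n$. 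A ping-pong argument on $\L(S;p)$ based on the disjointness of the supports, in the style of~\cite{Juliette}, then shows that $h$ is loxodromic with attracting and repelling cliques $\{r^k \rho^+\}_{k=0}^{n-1}$ and $\{r^k \rho^-\}_{k=0}^{n-1}$, giving weight $n$.

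The main obstacle is to ensure that the $2n$ rays $r^k\rho^\pm$ are genuinely high-filling in the ambient surface $S$, not merely filling in $\bigcup_k r^k P$: otherwise they would not lie in the Gromov boundary of $\L(S;p)$ and would not form attracting and repelling cliques of a loxodromic element. This requires choosing the $\Sigma_i$ and $\phi$ so that the complement of $\bigcup_k r^k P$ in $S$ is separated from $p$ by leaves of $\phi$'s foliation, guaranteeing that every simple loop in $S$ based at $p$ has positive intersection with some $r^k\rho^+$. Verifying this concrete filling property, and converting the ping-pong confined to $\bigcup_k r^k P$ into a true quasi-axis in the full loop graph $\L(S;p)$, is the central technical step, and would be handled by mimicking the intersection estimates carried out for $\R^2 \setminus \Cantor$ in~\cite{Juliette}.
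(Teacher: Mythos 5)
There is a genuine gap, and it is fatal to the construction as described rather than a technicality. Your element $h=r\phi$ has $h^n$ equal to a product of pseudo-Anosovs supported on $n$ pairwise disjoint proper subsurfaces $r^kP$ of $S$; in particular $h$ preserves, up to isotopy and up to a finite permutation, the system of dividing arcs and boundary curves separating the pieces $r^kP$ from each other and from $S\setminus\bigcup_k r^kP$ (which is nonempty since the $\Sigma_i$ are finite type and $S$ is not). From this preserved configuration one extracts a loop or short ray based at $p$ with finite $h$-orbit in $\RGC(S;p)$, so $h$ has bounded orbits on the loop graph and is \emph{not} loxodromic; no ping-pong argument can repair this. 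Relatedly, each ray $r^k\rho^\pm$ lies inside the single piece $r^kP$ and is therefore disjoint from any loop based at $p$ supported in a different piece, so these rays are not loop-filling, hence not high-filling and not points of $\partial\L(S;p)$. The fix you propose --- arranging that leaves of the invariant foliations separate $p$ from the complement of $\bigcup_k r^kP$ --- is self-contradictory: for the leaves to trap every loop leaving $p$, the foliations' supports must fill a punctured neighborhood of $p$ without leaving gaps along the dividing arcs, which is incompatible with the $n$ supports being pairwise disjoint proper subsurfaces for $n\ge 2$. The underlying obstruction is that loxodromicity supported on a finite-type subsurface requires the restriction to that subsurface to be pseudo-Anosov, whereas your $h$ restricted to $\bigcup_k r^kP$ is reducible.

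The paper's proof avoids all of this by using a \emph{single} essential finite-type subsurface $S'\subseteq S$ containing $p$, carrying a pseudo-Anosov whose invariant foliations have an $n$-pronged singularity at $p$ (such exist once $S'$ has enough complexity, by the Euler--Poincar\'e--Hopf constraint); a suitable power is extended by the identity. The weight $n$ then comes from the $n$ prongs of one singularity rather than from $n$ symmetric one-prong pieces, and loxodromicity in the ambient graph is transferred via the quasi-isometric embedding $\phi^\rightarrow:\bar\L(S';p)\to\bar\L(S;p)$ and the $3$-Lipschitz projection $\phi^\leftarrow$ of Section~\ref{section:subsurfaces_1} (Lemmas~\ref{lemma:lipschitz}--\ref{lemma:qi_embed}), which show that every loop or ray disjoint from the image foliation leaves must live in $\phi(S')$ and that orbits project back to unbounded orbits. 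If you want to salvage the spirit of your symmetric construction, you would have to make the rotation-plus-pseudo-Anosov genuinely irreducible on a subsurface containing $p$ --- which is essentially the same as producing an $n$-pronged singularity at $p$, i.e., the paper's construction.
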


The proof use the existence of quasi-isometric embeddings between loop graphs of subsurfaces: this allows us to use results from finite type surfaces theory to construct our examples.  We remark that we
would like more examples of high weight loxodromic actions
which are fundamentally infinite-type, but these are difficult
to understand.  Note that the infinite-type result in
Theorem~\ref{theo:weights} is in contrast
to the situation for finite type surfaces:
\begin{lemma*}{\textbf{\emph{\ref{lemma:finite_type}.}}}
Let $S$ be a finite type surface with a marked puncture $p$.
There is a uniform bound on the weight of any element in $\Mod(S;p)$.
\end{lemma*}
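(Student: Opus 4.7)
The plan is to use the Nielsen-Thurston classification to reduce to the pseudo-Anosov case on a subsurface containing $p$, and then to invoke the Euler--Poincar\'e formula for measured foliations to bound the number of prongs at $p$.

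First, suppose $h \in \Mod(S;p)$ acts loxodromically on $\L(S;p)$. The weight of $h$ is unchanged on replacing $h$ by a nonzero power, so I would pass to a sufficiently divisible power so that, via Nielsen-Thurston, $h$ preserves each component of its canonical reduction system and acts on each complementary subsurface as either the identity, a pure multitwist on boundary curves, or a pseudo-Anosov. Let $\Sigma$ be the component containing $p$. If $h|_\Sigma$ were the identity or a multitwist along $\partial\Sigma$, then some essential simple loop based at $p$ inside $\Sigma$ would be $h$-invariant up to isotopy, which would force $h$ to fix a vertex of $\L(S;p)$ and contradict loxodromicity. Hence $h|_\Sigma$ is pseudo-Anosov.

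Second, the remark preceding Theorem~\ref{theorem:finite cliques} already identifies, for a pseudo-Anosov $h|_\Sigma$, the cliques $\CC^\pm(h|_\Sigma)$ with the set of leaves of the stable and unstable measured foliations of $h|_\Sigma$ ending at the prong singularity $p$. I would then argue that these cliques coincide with $\CC^\pm(h)\subset \L(S;p)$: any high-filling ray fixed by a power of $h$ must be disjoint from the reduction multicurve (otherwise the twists along those curves would distort it unboundedly, preventing its projective class on the boundary circle from being $h$-invariant), so it is supported inside $\Sigma$; conversely, a leaf of the pseudo-Anosov foliation on $\Sigma$ ending at $p$ remains high-filling when viewed in $S$ because minimality of the foliation on $\Sigma$ forces the ray to meet every simple loop through $p$ contained in $\Sigma$, while any simple loop lying outside $\Sigma$ is already separated from $p$ by $\partial\Sigma$. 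Therefore, the weight of $h$ equals the number $k_p$ of prongs of the stable foliation at $p$ on $\Sigma$.

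The bound is then purely topological. The Euler--Poincar\'e formula for measured foliations on $\Sigma$, with $k_x$-prong singularities at interior points and at punctures, reads
$$\sum_{x\in\mathrm{Sing}} (k_x - 2) = -2\chi(\Sigma),$$
so in particular $k_p - 2 \leq -2\chi(\Sigma)$. Since $\Sigma$ is an essential subsurface of the finite type surface $S$, we have $|\chi(\Sigma)| \leq |\chi(S)|$, and hence $k_p \leq 2 + 2|\chi(S)|$, a bound depending only on $S$.

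The main obstacle is the subsurface reduction step: checking carefully that every high-filling ray in $\CC^\pm(h)\subset\L(S;p)$ is actually supported in $\Sigma$, and conversely that a leaf of the pseudo-Anosov foliation ending at $p$ in $\Sigma$ remains high-filling in $S$. Both directions require matching the circular dynamics on the boundary of the conical cover of $S$ (furnished by Theorem~\ref{theorem:finite cliques}) with the dynamics on the conical cover of $\Sigma$, across the Nielsen-Thurston decomposition. Once this bijection between the cliques is established, the Euler--Poincar\'e estimate above immediately yields the uniform bound.
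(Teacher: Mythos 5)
Your proposal is correct and follows essentially the same route as the paper: identify the clique of a loxodromic element with the set of prongs of a pseudo-Anosov foliation at $p$ and bound that number via the Euler--Poincar\'e formula (the paper states this more tersely, citing the formula in the form $2-2g=\sum_i(1-d(s_i)/2)$ and not spelling out the Nielsen--Thurston reduction to a subsurface that you carry out). One small correction: from $\sum_x(k_x-2)=-2\chi(\Sigma)$ you cannot immediately deduce $k_p-2\le -2\chi(\Sigma)$, because $1$-prong singularities at other punctures contribute negatively to the sum; but since such singularities can occur only at the boundedly many punctures and boundary components of $\Sigma$, the uniform bound survives --- this is exactly the point the paper makes when it observes that a single prong can only occur at a puncture.
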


We next focus on the construction of quasimorphisms for subgroups of $\Mod(S;p)$. Again using the properties of the
two simultaneous actions, we prove the following result:

\begin{theorem*}{\textbf{\emph{\ref{theo:quasimorphisms_weight}.}}} Let $S$ be a surface with an isolated marked puncture $p$.
Let $G$ be a subgroup of $\Mod(S;p)$ such that there exist two
loxodromic elements $g,h \in G$ with weights $w(g) \neq w(h)$.
Then the space $\tilde Q(G)$ of non-trivial quasimorphisms
on $G$ is infinite dimensional. 
\end{theorem*}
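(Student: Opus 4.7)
The plan is to invoke the Bestvina--Fujiwara anti-alignment criterion mentioned in the introduction: it suffices to exhibit two loxodromic elements in $G$ whose quasi-axes in $\L(S;p)$ are anti-aligned, and I will argue that the given pair $g,h$ is already such a pair.

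The first step is to record two invariance properties of the weight. Any $f\in\Mod(S;p)$ induces a homeomorphism of the boundary circle $S^1$ of the conical cover, and the cliques $\CC^\pm(g)\subset S^1$ are determined by the fixed points of $g$ on $\partial\L(S;p)$; hence $\CC^\pm(fgf^{-1})=f\cdot\CC^\pm(g)$, and in particular $w(fgf^{-1})=w(g)$. Likewise $g$ and $g^k$ share their attracting and repelling fixed points on $\partial\L(S;p)$, so $w(g^k)=w(g)$ for every $k\ne 0$. Combining this with the bijection between $\partial\L(S;p)$ and cliques (Theorem~\ref{theorem:boundary_bijection}), I conclude that for every $f\in G$, the endpoints $f\cdot\CC^\pm(g)$ of the quasi-axis of $fgf^{-1}$ are cliques of cardinality $w(g)$, hence distinct in $\partial\L(S;p)$ from the endpoints $\CC^\pm(h)$ of the quasi-axis of $h$, which have cardinality $w(h)\ne w(g)$.

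The second step is an application of the Morse lemma in $\L(S;p)$. Fix a base vertex $v_0$ and use $\{g^n v_0\}_{n\in\Z}$ as a quasi-axis of $g$; then $f\cdot\{g^n v_0\}$ is a quasi-axis of $fgf^{-1}$ with the same quasi-geodesic constants, since $f$ acts by isometries on $\L(S;p)$. Because the four endpoints $f\cdot\CC^\pm(g)$ and $\CC^\pm(h)$ on $\partial\L(S;p)$ are pairwise distinct for every $f\in G$, the Morse lemma in a $\delta$-hyperbolic space provides, for any $C>0$, a constant $L=L(C,\delta,g,h)$ independent of $f$ such that
\[
\mathrm{diam}\bigl(f\cdot\{g^n v_0\}\ \cap\ N_C(\{h^n v_0\})\bigr)\le L
\qquad\text{for every } f\in G.
\]
This is exactly the anti-alignment of $g$ and $h$, so the Bestvina--Fujiwara criterion yields an infinite-dimensional $\tilde Q(G)$. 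The principal obstacle is making this Morse-lemma bound genuinely uniform in $f$: the crucial input is that conjugation moves the quasi-axis of $g$ to that of $fgf^{-1}$ by an isometry, preserving the quasi-geodesic constants, so once Step~1 rules out any coincidence among endpoints the standard Gromov-hyperbolic argument applies uniformly. It is worth emphasising that the whole scheme works in the non-acylindrical setting because the weaker anti-alignment criterion, rather than WPD, is being invoked.
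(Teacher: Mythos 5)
Your Step 1 (conjugation- and power-invariance of the weight, hence distinctness of all the relevant boundary points) is correct, but Step 2 contains a genuine gap that sinks the argument. The Morse lemma does not give a bound on $\mathrm{diam}\bigl(f\cdot\{g^nv_0\}\cap N_C(\{h^nv_0\})\bigr)$ that is uniform in $f$. For two quasi-geodesics with the same constants in a $\delta$-hyperbolic space, having four pairwise distinct endpoints on the Gromov boundary only bounds their overlap in terms of the Gromov products of those endpoint pairs, and as $f$ ranges over $G$ the endpoints $f\cdot\CC^{\pm}(g)$ can approach $\CC^{\pm}(h)$ in $\partial\L(S;p)$, making those Gromov products, and hence the overlap, arbitrarily large. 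In other words, your constant $L$ genuinely depends on $f$, and uniformity in $f$ is precisely the content of the Bestvina--Fujiwara relation $g\nsim h$; if mere distinctness of translated endpoints implied anti-alignment, then any two independent loxodromics with non-coinciding axes would be anti-aligned, which is false in general and would trivialize the criterion. Nothing in your argument uses the hypothesis $w(g)\neq w(h)$ beyond ruling out equality of endpoints, so it cannot be the whole story.

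The paper's proof supplies the missing uniform obstruction by running the two actions simultaneously. It builds a particular quasi-axis $(z_n)$ of $g$ through $w(g)$ chosen loops lying in pairwise non-adjacent complementary intervals of $\CC^{+}(g)\cup\CC^{-}(g)$ on the circle $S^1=\partial\tilde S$, and uses Lemmas~\ref{lemma:quasi-axis nbhd} and~\ref{lemma:quasi-axis in neighborhood of cliques} to show that any sufficiently long subsegment $W$ has its initial marked loops in disjoint neighborhoods of distinct rays of $\CC^{-}(g)$ and its terminal marked loops in disjoint neighborhoods of distinct rays of $\CC^{+}(g)$. If some $\phi$ mapped $W$ into the $C$-neighborhood of a quasi-axis of $h$, then after composing with a power of $h$ the images of these $2w(g)$ loops would have to land in the union of the neighborhoods of the $2w(h)$ rays of $\CC^{\pm}(h)$, respecting the cyclic order because $h^k\phi$ acts on $S^1$ by a homeomorphism; since $2w(h)<2w(g)$, this is combinatorially impossible. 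That circle-homeomorphism counting argument is the step your proposal is missing, and it is where the inequality of weights actually does its work.
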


This theorem has several corollaries; for example, together
with Theorem~\ref{theo:weights}, it proves that 
$\tilde Q(\Mod(S;p))$ is infinite dimensional.  It also provides
a tangible criterion for understanding its subgroups.

\subsection{Organization}
As intimated in the preceding description of our
results, this paper roughly divides into three parts:

\begin{enumerate}
\item Part 1 is only about surfaces and uses
mainly topological arguments.
\item Part 2 extends the results from~\cite{Juliette,boundary}
about mapping class groups and uses a variety of topology and
geometric group theory.
\item Part 3 is about dynamics of mapping class groups and
combines parts (1) and (2) along with dynamical arguments.
\end{enumerate}

With the hope of clarifying the structure, we have explicitly
recorded the parts in the text.  Part 2 is devoted mostly to
directly generalizing the results from \cite{boundary}.
The nature of the arguments in~\cite{boundary}
means that many of the proofs generalize verbatim.
In the interest of brevity, we rely heavily on~\cite{boundary}
and merely indicate which parts require more care in
our more general situation.

\subsection{Acknowledgment}
For helpful discussions around different objects of this paper, we thank Javier Aramayona, Danny Calegari, Serge Cantat, Fran{\c c}oise Dal'bo, Steven Frankel, S\'ebastien Gou\"ezel, Jeremy Kahn, Fran\c cois Laudenbach, Fr\'ed\'eric Le Roux, Yair Minsky, Alexander Rasmussen, Juan Souto and Ferr\'an Valdez.

The first author acknowledges support from the Centre Henri Lebesgue ANR-11-LABX-0020-01.

\part{Topology and geometry of infinite type surfaces}
\section{Surfaces and embeddings}
\label{sec:surfaces}

Although the generalization of the definitions and
theorems in~\cite{boundary} is intuitively straightforward,
one of the main issues is clarifying exactly what topological
assumptions are necessary.  We could provide the
generalization in the case of finite type surfaces
without much trouble, but it seems a shame to disregard
the most general possible case in a paper specifically
intended to provide a useful theoretical basis for further
study of potentially infinite type surfaces.

\subsection{Pants}

Recall that a pair of pants is a sphere minus three open
disks, as shown in Figure~\ref{figu:3_pants_1}, right.
That is, a genus zero surface with three boundary components.
It will be convenient for us to allow up to two of the
boundary components of a pair of pants to be degenerate, meaning
up to two of the boundary components may be replaced by punctures.
Figure~\ref{figu:3_pants_1} shows the possibilities.

\begin{figure}[htb]
\labellist
\endlabellist
\centering
\vspace{0.2cm}
\includegraphics[scale=0.2]{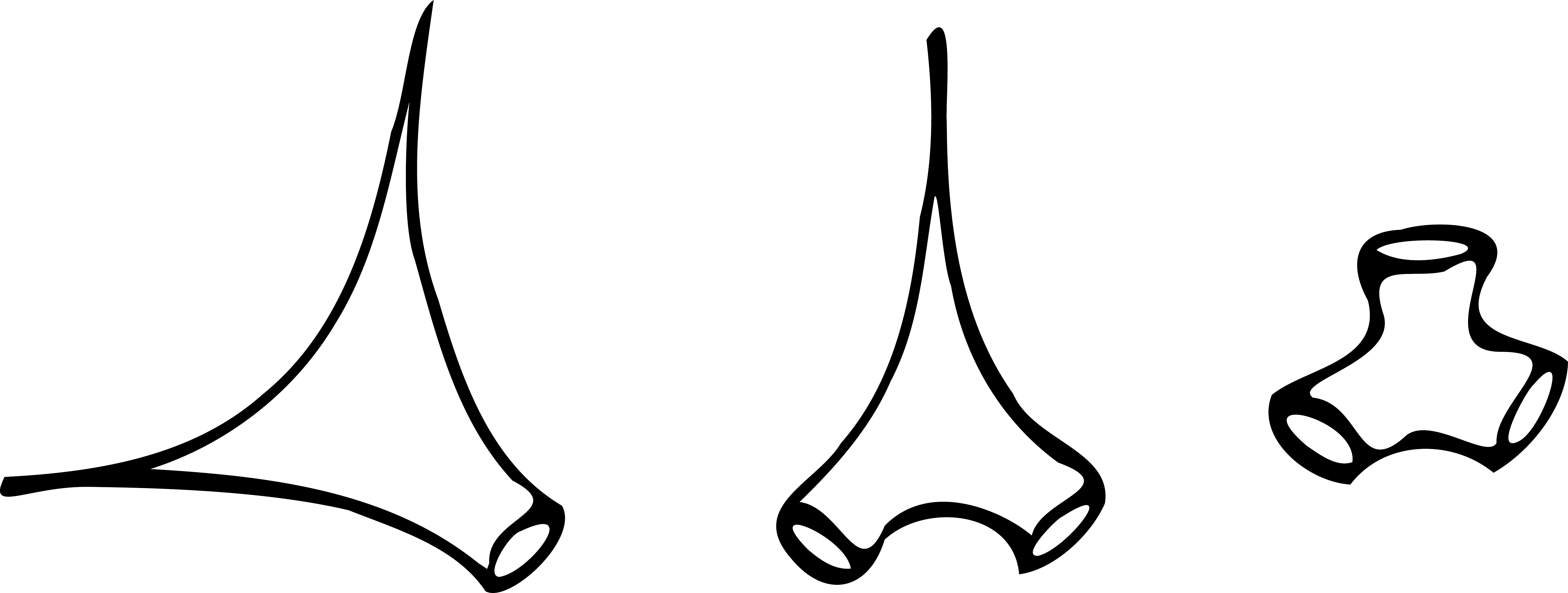}
\caption{The three allowed pairs of pants.}
\label{figu:3_pants_1}
\end{figure}

Later, we will put a standard hyperbolic metric on all
our pairs of pants.  For now, however, it is enough
to deal with them topologically.  Each pair of pants
can be cut into two disks by slicing with the plane parallel to
the page in Figure~\ref{figu:3_pants_1}.  The curves along
which the pants are sliced are called the \emph{seams}
of the pair of pants.  The result of slicing is better thought
of as two (possibly degenerate) hexagons whose sides
alternate between punctures or boundaries of the pants and seams.

If we are given a $2$-manifold $S$ and a collection of
sub-$2$-manifolds
$P$ which are each homeomorphic to a pair of pants, we say
that $P$ is a decomposition of $S$ into pairs of pants
if $\cup P = S$ and the intersection of any pair of elements
of $P$ is either empty or a boundary component of both.
It is a standard fact that any orientable surface of finite or infinite
type admits a pants decomposition.  In the next section,
we provide a specific pants decomposition for a wide class 
of $2$-manifolds, and which will be useful for our purpose.

\subsection{Infinite type surfaces}
\label{sec:infinite_type}

To start our study, we must first wrangle
an arbitrary topological surface into a useful standard
form, from which we will derive a pants decomposition
and hyperbolic metric.
By a \emph{surface}, we mean a triangulable (and thus
separable; see~\cite{Richards}) orientable 2-manifold
without boundary (but with any punctures) with either
infinite type or with negative Euler characteristic.
In one situation in Section~\ref{section:action_lox},
we will require Euler characteristic less than $-2$,
so we simply make that assumption everywhere for simplicity
(our primary interest is infinite type, so any such restriction
is essentially an unimportant ``boundary condition'').
Restricting to the case without boundary makes it possible
to use results from~\cite{Richards} and deal with
the hyperbolic metric and technicalities.  However,
we remark that the loop graph of a surface is the same
as the loop graph of that surface without its boundary
(i.e. with the boundary curves replaced by punctures).
Therefore, all the results in this paper apply equally well
to surfaces with boundary.  It is simply technically easier to deal
with the surface with the boundaries replaced by punctures.

The hard work of putting such a surface into a nice form
is accomplished
by~\cite{Richards}, and in particular~\cite{Richards}, Theorem~1:

\begin{theorem*}[\cite{Richards}, Theorem~1]
Let $S$ and $S'$ be two surfaces of the same (possibly infinite)
genus and orientability class.  Then $S$ and $S'$ are
homeomorphic if and only if their ideal boundaries $B(S)$ and
$B(S')$
are topologically equivalent (as triples of spaces).
\end{theorem*}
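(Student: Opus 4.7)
The "only if" direction is immediate: the space of ends of a topological space, the subspace of ends at which genus accumulates, and (for non-orientable surfaces) the subspace of ends at which non-orientability accumulates, are all topological invariants, so the triple $(B(S), B'(S), B''(S))$ is determined up to homeomorphism by the homeomorphism type of $S$. Genus and orientability class are similarly invariant. The real content is the converse, and the plan is to build a homeomorphism $F\colon S \to S'$ as the direct limit of compatible homeomorphisms on nested compact exhaustions.

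The first step is to construct, on any triangulable surface satisfying the hypothesis, a canonical compact exhaustion $K_1 \subset K_2 \subset \dots$ with $\bigcup K_n = S$, such that each $K_n$ is a compact subsurface bounded by finitely many disjoint simple closed curves, and such that each connected component $U$ of $S \setminus K_n$ has exactly one boundary curve and corresponds to a clopen subset $\widehat{U} \subseteq B(S)$ consisting of the ends lying in $U$. Refining $n \mapsto n+1$ further partitions each such $\widehat{U}$ into finitely many clopen subsets, and as $n\to \infty$ these partitions generate the topology on $B(S)$. Such an exhaustion exists because $S$ is triangulable and separable and because any compact subsurface can be enlarged so that complementary components become connected around a single boundary circle by absorbing planar pieces. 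On $S'$, we construct an analogous exhaustion $K_1' \subset K_2' \subset \dots$.

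Given a homeomorphism $\varphi\colon (B(S), B'(S), B''(S)) \to (B(S'), B'(S'), B''(S'))$, I would refine the two exhaustions simultaneously so that, for every $n$, the clopen partition of $B(S)$ induced by the components of $S \setminus K_n$ corresponds under $\varphi$ to the clopen partition of $B(S')$ induced by the components of $S' \setminus K_n'$. Then I build $F$ inductively: the piece $K_1$ is a compact surface with boundary, classified by genus, orientability, and number of boundary components, and these match $K_1'$ provided we have booked the correct finite genus/crosscap count in each piece. For the inductive step, each shell $K_{n+1}\setminus K_n$ decomposes according to the matched clopen partition into finitely many compact surfaces with boundary; by the classification of compact surfaces, each of these matches the corresponding piece in $K_{n+1}'\setminus K_n'$, and we may extend the already-chosen boundary identifications on $\partial K_n$ by a standard boundary-extension argument for homeomorphisms of surfaces with boundary. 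The limit $F = \bigcup_n F_n$ is a well-defined homeomorphism of $S$ onto $S'$ because every point of $S$ lies in some $K_n$ and the $F_n$ are nested.

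The main obstacle is the bookkeeping needed to make the two exhaustions genuinely compatible: at each stage one must arrange that the genus and non-orientability contained in a complementary component $U$ on the $S$-side agrees with that of its partner $U'$ on the $S'$-side. This is where the triple structure really gets used, because the accumulation subsets $B'$ and $B''$ determine, for every clopen $W \subseteq B$, whether $W$ absorbs infinite genus or infinite non-orientability, and one must refine $K_n, K_n'$ far enough to either push any ``finite'' excess genus/crosscaps into the compact part (where it can be realized inside $K_n$ itself) or to certify infinite accumulation by exhibiting it in some later shell. Managing this refinement along a back-and-forth — enlarging $K_n$ on $S$, then $K_n'$ on $S'$, alternately, to equalize finite genus/crosscap counts and partition-compatibility — is the technical heart of the argument, and it relies essentially on $B(S)$ being a compact, metrizable, totally disconnected space so that clopen partitions form a countable basis one can traverse inductively.
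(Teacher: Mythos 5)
The statement you were asked about is not proved in this paper at all: it is Richards' Theorem~1, quoted verbatim from \cite{Richards} as imported background, so there is no internal proof to compare yours against. What you have written is a sketch of the classical Ker\'ekj\'art\'o--Richards classification argument, and as an outline it is the right strategy: the ``only if'' direction by topological invariance of the end triple, and the converse by matching canonical compact exhaustions whose complementary components induce compatible clopen partitions of the end spaces, then extending homeomorphisms shell by shell.

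That said, as a proof it is incomplete in exactly the place you flag as ``the technical heart.'' The inductive step --- showing that after a suitable back-and-forth refinement each compact shell $K_{n+1}\setminus K_n$ on the $S$-side is homeomorphic, rel the already-chosen identification of $\partial K_n$, to the corresponding shell on the $S'$-side --- is where essentially all of the content of Richards' theorem lives, and you only assert that it can be done. Two concrete points need actual arguments: (i) that one can always enlarge $K_n$ so that each complementary component has connected boundary and so that any ``finite excess'' of genus in a complementary component whose end set misses $B'(S)$ can be absorbed into the compact part (this uses that a component whose ends avoid $B'$ has finite genus, which itself requires proof from the definition of $B'$); and (ii) that a homeomorphism between compact bordered subsurfaces prescribed on some boundary circles extends over the shells compatibly with orientation --- the classification of compact surfaces with boundary gives existence of some homeomorphism, not one extending given boundary data, and reconciling this requires choosing the boundary identifications carefully at each stage. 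Since this is a cited external theorem, the sensible course in the context of this paper is to invoke \cite{Richards} rather than reprove it; if you do want a self-contained proof, those two steps are what must be written out.
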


We would now refer to the ``ideal boundary'' of this theorem 
as the space of ends of the surface, although we temporarily use
the language of~\cite{Richards} to explain the theorem.
The ideal boundary can be thought of
as a triple by considering $B(S) \supset B'(S) \supset B''(S)$,
where $B(S)$ is the entire boundary, $B'(S)$ is the nonplanar
part (the ends accumulated by genus), and $B''(S)$ is the
nonorientable part.  This last part is not an issue for us
because we consider only orientable surfaces.

Following~\cite{Richards}, Theorem~1, we will adapt
the ideal boundary triple to be applicable to our case.
We parameterize all surfaces by the triple
$(g,K,K')$, where $g \in \bar{\N}$ is the
(possibly infinite) genus, $K$ is the set of ends,
and $K'$ is the set of ends accumulated by genus.
Then~\cite{Richards}, Theorem~1 implies that any two surfaces
with the same associated triples are homeomorphic.

In~\cite{Richards}, a second theorem constructs a surface
with any given triple in a simple way.  For our purposes,
it will be more convenient to use a different construction,
as follows.

\subsection{Core trees and surface embeddings}

It will be useful to construct a combinatorial object,
the \emph{core tree}, which will record all the information
necessary to reconstruct a surface (and some additional
information we will use later).  A core tree is a possibly
infinite tree with some vertices marked and which is allowed
the following kinds of vertices:
\begin{enumerate}
\item Marked $1$-, $2$-, and $3$-valent vertices
\item Unmarked $1$- and $3$-valent vertices
\end{enumerate}
Given a core tree $T$, we construct a topological surface $S(T)$
from $T$ by replacing each vertex with a surface which has
one boundary for each incident edge
and then gluing all pairs of boundaries indicated by the edges.
We replace:
\begin{enumerate}
\item Marked $1$-, $2$-, and $3$-valent vertices with genus one
surfaces with $1$,$2$, and $3$ boundaries, respectively.
\item Unmarked $3$-valent vertices with pairs of pants.
\item Unmarked $1$-valent vertices with nothing.
\end{enumerate}

\begin{figure}[htb]
\labellist
\endlabellist
\centering
\includegraphics[scale=0.2]{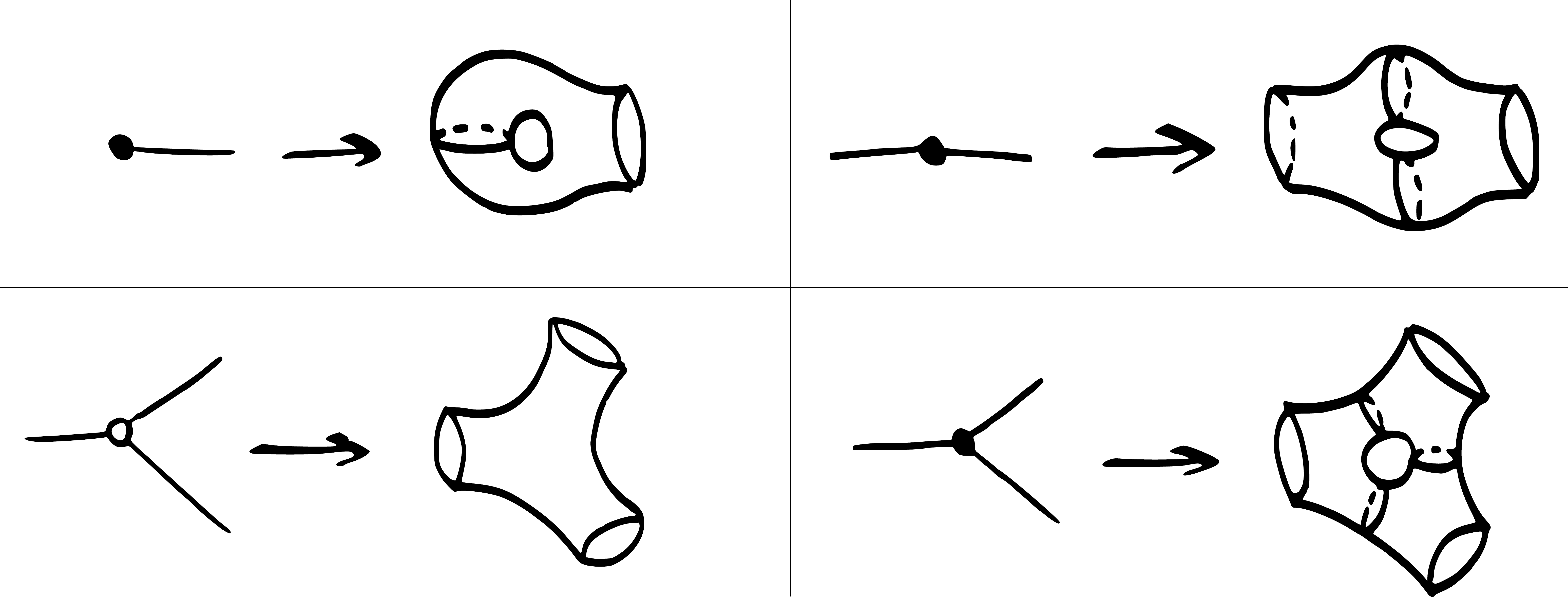}
\caption{Expanding the vertices of $T$ into the
components of $S(T)$.}
\label{figu:tree_vertices}
\end{figure}

\begin{figure}[htb]
\labellist
\endlabellist
\centering
\includegraphics[scale=0.18]{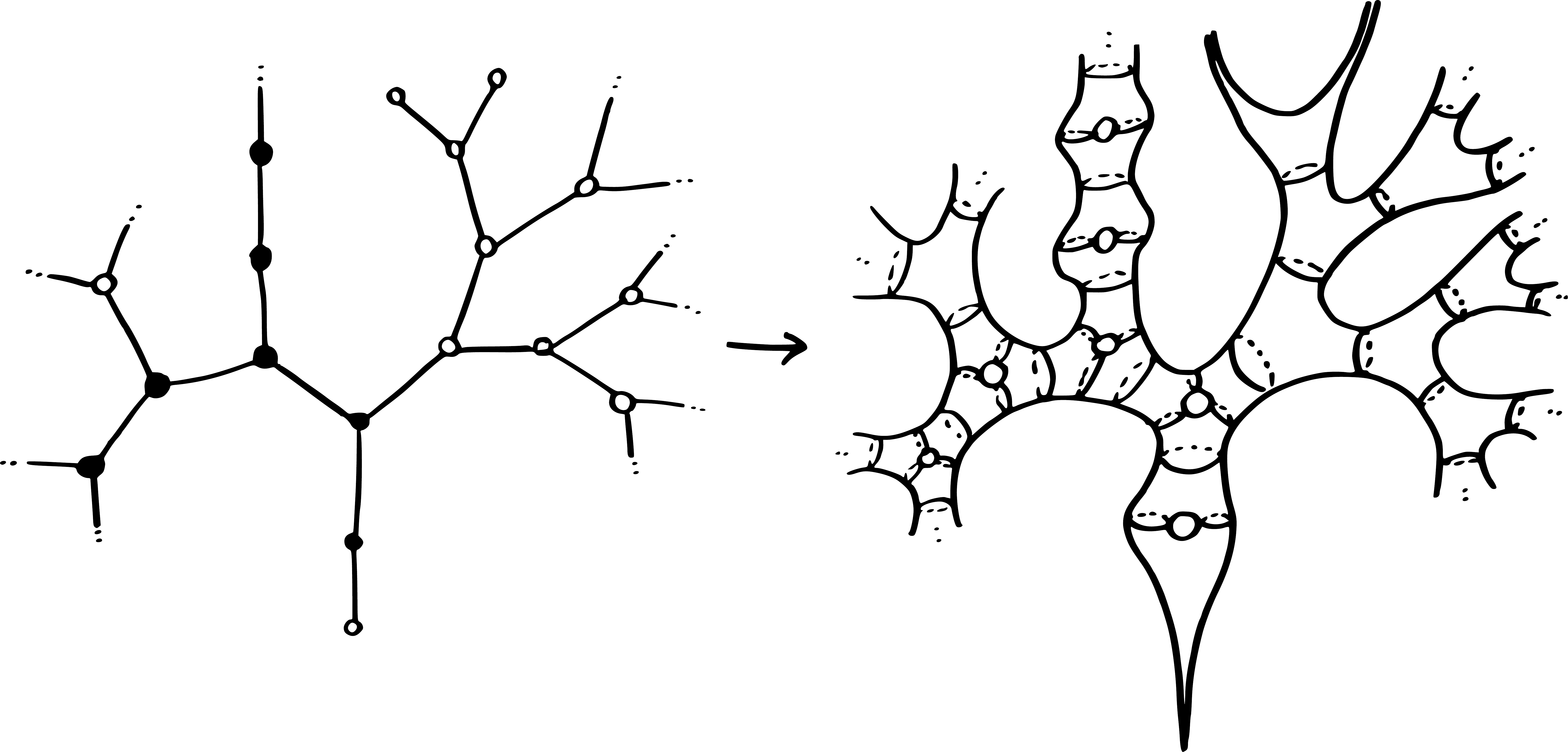}
\caption{An example core graph $T$ and the surface $S(T)$.}
\label{figu:tree_example}
\end{figure}

See Figures~\ref{figu:tree_vertices} and~\ref{figu:tree_example}
for pictures of the construction of $S(T)$.
Note that because the $1$-valent vertices are replaced with nothing,
the surface $S(T)$ may have boundary.  For technical reasons,
it will be more convenient to have these boundaries be punctures
instead, so in the construction of $S(T)$, if a boundary would
remain unglued, it is replaced with a puncture.

The core tree $T$ records the topological type of the surface $S(T)$,
but we will use it to record more.  A \emph{rooted} core tree is a
core tree with one of the vertices chosen to be a root.  This vertex
must be marked or be a $3$-valent unmarked vertex.

\begin{lemma}\label{lem:core_tree}
Given a surface $S$ which is not a sphere, torus, disk, or annulus
(i.e. $S$ has $\chi(S) < 0$ or is of infinite type),
there is a
rooted core tree $T$ so that $S \cong S(T)$.
\end{lemma}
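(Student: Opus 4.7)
The plan is to reduce the existence question to Richards' classification theorem: a surface in our class is determined up to homeomorphism by its triple $(g,K,K')$ (genus, space of ends, subspace of ends accumulated by genus), so it suffices to exhibit a rooted core tree $T$ such that $S(T)$ has the prescribed triple. The triples that arise from our hypotheses are exactly those in which $K$ is a (possibly empty) compact, metrizable, totally disconnected space and $K'\subseteq K$ is closed.

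First I would build an auxiliary locally finite tree $T_0$ encoding $K$. Present $K$ as a nested sequence of finite clopen partitions $\mathcal{P}_0, \mathcal{P}_1,\ldots$, each refining the previous, and build $T_0$ level by level: the vertices at level $n$ are the pieces of $\mathcal{P}_n$, each connected by an edge to the unique piece of $\mathcal{P}_{n-1}$ containing it. Whenever a piece shrinks to a single isolated point of $K$, terminate that branch with an unmarked $1$-valent vertex (which will become a puncture in $S(T_0)$). Whenever a piece splits into $k\geq 2$ sub-pieces, resolve the resulting high-valence branching by a binary tree of unmarked $3$-valent vertices. The resulting $T_0$ is locally finite and, under $S(\cdot)$, has leaves corresponding bijectively to the isolated points of $K$ and tree-ends corresponding bijectively and homeomorphically to the perfect part of $K$, so the end space of $S(T_0)$ matches $K$ with the correct topology.

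Next I would incorporate the genus. A marked $2$-valent vertex inserted in the interior of an edge has the effect of gluing a handle along that edge, raising the genus by one without altering the end space. For each $e\in K'$, fix a ray $r_e$ in $T_0$ converging to $e$ and subdivide infinitely many of its edges with marked $2$-valent vertices; this makes $e$ accumulated by genus. For the remaining (necessarily finite) residual genus not yet produced, insert the appropriate finite number of marked $2$-valent vertices on a single fixed edge. Small end spaces require minor adjustments: when $K=\emptyset$ (forcing $S$ closed of finite genus $g\geq 2$), take $T$ to be a path of $g$ marked vertices ($2$ endpoints marked $1$-valent and $g-2$ interior vertices marked $2$-valent); when $|K|\leq 2$, combine the leaf and handle constructions into a small tree satisfying the vertex-type constraints. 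Finally designate as root any marked vertex or unmarked $3$-valent vertex of $T$; at least one such vertex exists under our hypotheses, which exclude the four trivial surfaces.

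The main obstacle is the bookkeeping needed to verify that $S(T)$ realizes exactly the prescribed triple $(g,K,K')$: the end space of $S(T)$ must match $K$ with the correct topology, and the accumulated-by-genus ends must be exactly $K'$, with no accidental accumulation elsewhere. The first point follows from the correspondence between $(\text{leaves}\sqcup\text{tree-ends})$ of $T$ and points of $K$, combined with the fact that the end topology of $S(T)$ coincides with the natural cylinder topology on $T_0$. The second point follows because handles are placed only along the chosen rays to points of $K'$, plus a single finite residual block that accumulates at no end. Once the triple is matched, Richards' theorem yields $S\cong S(T)$.
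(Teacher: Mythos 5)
Your proposal is correct and rests on the same pillar as the paper's proof --- Richards' classification reduces everything to realizing the triple $(g,K,K')$ by some $S(T)$ --- but the tree is built in a genuinely different way. The paper starts from the full infinite trivalent tree (whose end space is a Cantor set containing $K$ as a closed subset), takes the convex hull of the root and the ends in $K$, marks the vertices on the rays to $K'$ (or any $g$ vertices when $g$ is finite), and only then collapses unmarked $2$-valent chains, so that isolated ends of $K$ emerge automatically as the infinite unmarked rays that get replaced by leaves. You instead build the tree ``from the inside'' out of a nested sequence of finite clopen partitions of $K$, declare leaves at isolated points by fiat, and create genus by subdividing edges with marked $2$-valent vertices along rays to points of $K'$. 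Your route makes the homeomorphism between $K$ and the end space of $S(T)$ somewhat more transparent (it is the standard tree model of a compact totally disconnected metrizable space), at the cost of having to check by hand that no end outside $K'$ is accidentally accumulated by genus; that check does go through, precisely because a marked vertex beyond a vertex $v$ lies on a ray to some $e\in K'$ ending in the subtree beyond $v$, and $K'$ is closed. The paper's route gets the isolated ends and the non-accumulation for free from the pruning, at the cost of a less direct identification of the end topology.

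Two small points to tighten. First, your level-by-level tree has unmarked $2$-valent vertices wherever a partition piece fails to split from one level to the next; these are not allowed vertex types for a core tree, so you must contract those chains (exactly the paper's final cleanup step) before invoking $S(\cdot)$. Second, the tree-ends of your $T_0$ correspond to the non-isolated points of $K$, not to its perfect kernel (e.g.\ for $K=\{0\}\cup\{1/n\}$ the perfect kernel is empty but $0$ is still a tree-end); this is only a naming slip and does not affect the argument.
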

\begin{proof}
The outline of the proof is to produce a rooted core tree $T$ and then
apply~\cite{Richards}, Theorem~1, as discussed above, to show
that $S \cong S(T)$.

First, we note that the set of ends of any surface
is a closed subset of the Cantor set (see~\cite{Richards},
Proposition~3), so given the triple $(g,K,K')$ for a surface,
$K$ is a closed subset of the standard Cantor set, and
$K'$ is a closed subset of $K$.  We take $(g,K,K')$ to be
the triple associated with our target surface $S$.

There are some special cases to be dealt with first: if $S$ has no
end and $g$ is finite, then $S$ is a closed surface, and by assumption
it has genus at least $2$.  In this case, we can take $T$ to simply
be a line of $g$ vertices, all of which are marked, and we can choose
one of them arbitrarily to be the root.  Then $S(T)$ is obviously
a closed genus $g$ surface, and we are done.

Otherwise, take $T$ to be an infinite valence $3$ tree.  We will repeatedly
modify $T$ until it becomes our desired rooted core tree.  The
ends of $T$ is a Cantor set, so we can identify $K$ with a subset
of the ends of $T$; abusing notation, we will freely think of
$K$ as the ends of $S$ or as a subset of the ends of $T$. 

If $K$ contains at least three points, then choose any three of them;
there is a unique geodesic triangle in $T$ whose vertices are these
three ends in $K$, and this triangle has a single central
vertex.  Let this vertex be the root.  If $K$ contains two
points, mark any vertex on the geodesic between these ends and let it
be the root.  If $K$ has a single point, choose any vertex in $T$,
mark it, and let it be the root.

Denote this chosen root by $r$, and consider the subtree of $T$
which is the union of all vertices and edges on any geodesic ray
from $r$ to some end in $K$.  We now rename $T$ to be this subtree,
as we imagine removing from $T$ everything not on one of these rays.
Note that now $K$ is exactly the ends of $T$.

We will now mark vertices in $T$ to get the appropriate genus.
If $g$ is finite, then $K'$ must be empty, and we mark any $g$ vertices
in $T$.  If the root $r$ is already marked, then mark $g-1$ vertices.
Note that if the root is marked, then $g\ge 1$ because a genus zero
surface with one or two ends is a disk or annulus, which we have ruled
out.
If $g$ is infinite, then $K'$ is nonempty.  As $K$ is the ends of $T$,
the set $K'$ is a subset of the ends of $T$.  Mark all vertices
on all geodesic rays from $r$ to the ends in $K'$.

\begin{figure}[htb]
\labellist
\endlabellist
\centering
\includegraphics[scale=0.125]{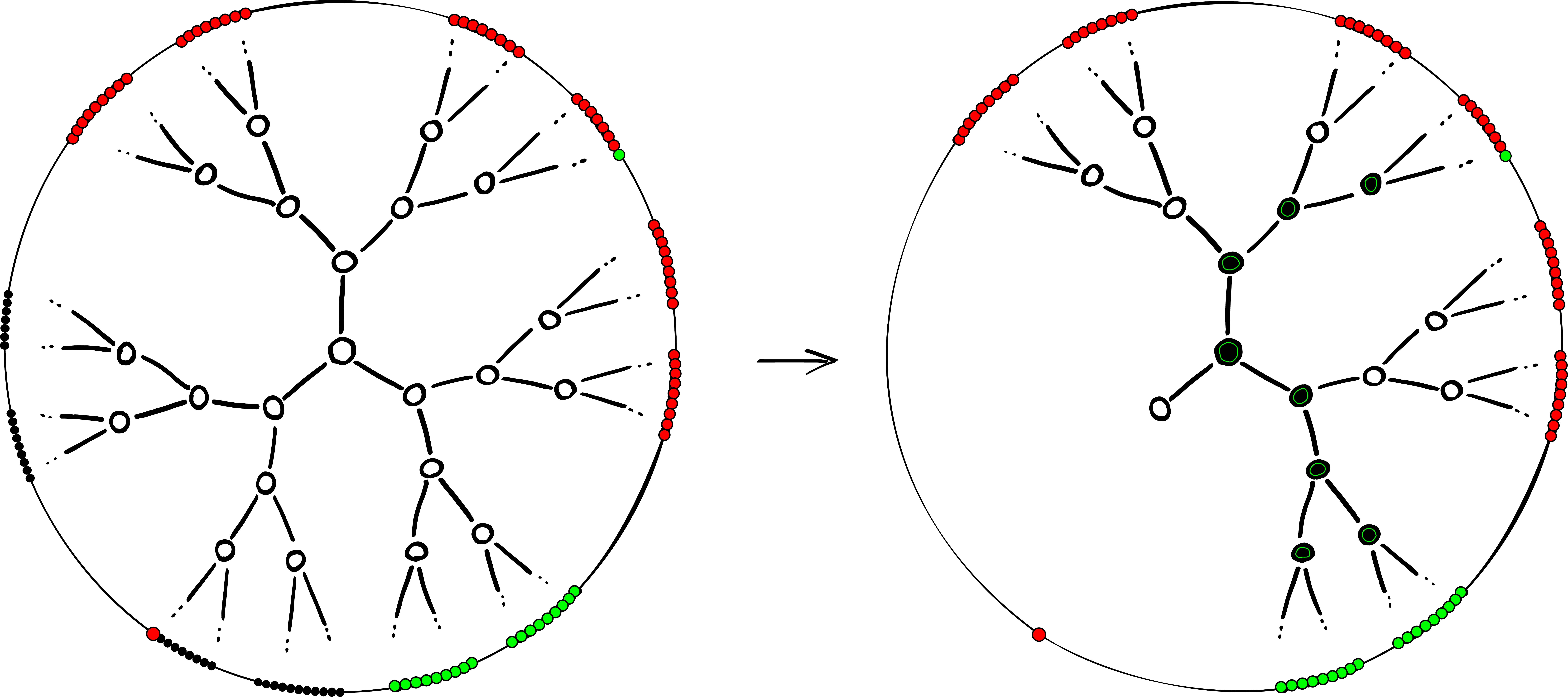}
\caption{Construction of a core tree for a surface $S(g, K,K')$ where: $g$ is infinite, $K=\CC \cup \{p\}$ (in red and green) is a Cantor set union an isolated puncture, and $K'\subset K$ (in green) is a Cantor set union an isolated puncture, both included in $\CC$}
\label{figu:tree}
\end{figure}

Now remove from $T$ all unmarked vertices of valence $2$: if there is an
infinite ray of unmarked valence $2$ vertices (necessarily approaching
an isolated point in $K$ but not $K'$), replace it with a single unmarked valence $1$ vertex.  For any unmarked valence $2$ vertex not on an infinite ray, we can find the maximal segment of unmarked valence $2$ vertices,
 remove it, and attach the vertices on its ends together.
As before, rename $T$ to be this new tree.  Note that the root
$r$ is not removed because by construction it is marked or $3$-valent.

We claim that $T$ is now our desired core tree, and $S(T)\cong S$.
To see this, we just need to check the hypotheses of 
\cite{Richards}, Theorem~1.  Recall $(g,K,K')$ is the triple for $S$,
and let $(h,L,L')$ be the triple for $S(T)$.  
First note that by the construction of $S(T)$, the space $L'$ is
the same as the ends of $T$ accumulated by marked vertices.
Also, $L$ is the same as the ends of $T$ plus the discrete
space of unmarked $1$-valent vertices.
Now, $g=h$ by construction:
we marked exactly $g$ vertices in $T$, or infinitely many if $g$ was infinite.  Next, $(L,L')=(K,K')$ as pairs of spaces.  This was
immediate before we removed unmarked $2$-valent vertices from $T$,
but note that removing any finite chain of such vertices doesn't change
the ends of $T$, and removing any infinite ray of unmarked $2$-valent
vertices removes a single isolated end.  In the construction of $S(T)$
this vertex is replaced by a cusp, so $S(T)$ recovers the
isolated ends in $L$. See Figure \ref{figu:tree} for an example.
\end{proof}

Now we use the core graph to produce a useful embedding of a surface
into $\R^3$ and also a set of proper arcs which cut the surface into
disks.  Recall that a simple proper arc $\alpha$ in a surface $S$
is an embedding
of the open interval $(0,1)$ such that for any compact subset
$C \subseteq S$, the preimage of $C$ under $\alpha$ is compact.
That is, both ends of $\alpha$ must exit out some end of the surface $S$.

\begin{lemma}\label{lem:proper_arcs}
Given a surface $S$ with at least one end and which is not
a disk or annulus, there is a locally
finite collection
$\mathcal W$ of mutually disjoint proper arcs such that the
complement of $\mathcal W$ in $S$ has exactly two components,
and both of them are simply connected.
\end{lemma}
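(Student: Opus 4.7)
The plan is to apply Lemma~\ref{lem:core_tree} to write $S \cong S(T)$ for a rooted core tree $T$, and then to build $\mathcal W$ geometrically by realising $S$ as a symmetric thickening of $T$ in $\R^3$. First I would embed $T$ into a plane $P \subset \R^3$ (any tree is planar), and thicken each vertex and edge of $T$ into its corresponding piece (pants for unmarked $3$-valent vertices, handle-pants for marked vertices, strips for edges, cusps for unmarked $1$-valent vertices) so that the resulting embedded surface $S \subset \R^3$ is invariant under the reflection $\iota \colon (x,y,z) \mapsto (x,y,-z)$. Pants pieces and strips embed symmetrically in the obvious way, and each of the three possible handle-pants topological types also admits such a symmetric embedding.

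Next I would consider the candidate arc system $\mathcal W_0 := S \cap P$. Away from handle pieces, $\mathcal W_0$ is a disjoint union of proper simple arcs: in each pants piece it restricts to the three seams, and in each strip to a single central arc. Across each pants curve these local arcs meet in exactly two points and so concatenate into proper arcs of $S$. In a handle piece, however, $\mathcal W_0$ may contain closed-curve components (reflecting the non-planarity of the handle), and these have to be eliminated before we obtain a collection consisting of arcs only.

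For each marked vertex of $T$, I would modify $\mathcal W_0$ inside the corresponding handle piece by adding finitely many extra arcs, chosen so that (i) every closed-curve component of $\mathcal W_0$ is broken into arcs, and (ii) each of the two local regions into which the modified system cuts the piece is a topological disk. Thinking of a genus-one piece with $k \leq 3$ boundary circles as, for instance, a square with opposite sides identified minus $k$ open disks, one can write down such arcs concretely: two ``handle-killing'' arcs dual to the handle generators, together with seam-like arcs between the boundary components. By choosing the extra arcs to share endpoints with $\mathcal W_0$ on each pants curve (and adding a few matching arcs in adjacent pieces if necessary), everything concatenates into a locally finite collection $\mathcal W$ of disjoint proper arcs of $S$.

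It then remains to verify the four required properties. Disjointness and properness hold by construction, and local finiteness follows from the local finiteness of $T$ together with the fact that each piece contributes only finitely many arcs. The two components of $S \setminus \mathcal W$ are the top half $S \cap \{z > 0\}$ and the bottom half $S \cap \{z < 0\}$, exchanged by $\iota$; each is a union of polygonal disks (one per piece of the decomposition) glued along upper (respectively lower) half-circles of the pants curves. Since the dual decomposition graph is exactly the tree $T$ and each gluing takes place along a single arc, an induction along $T$ shows that each half is simply connected. The main obstacle I anticipate is step (ii) above: producing the augmenting arcs in each handle piece so that both local regions are genuinely disks, while ensuring endpoint compatibility with the adjacent pieces. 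Since this is a finite topological statement about genus-one surfaces with at most three boundary circles, it should reduce to an explicit case check in each of those three topological types.
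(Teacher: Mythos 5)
Your overall strategy is the same as the paper's: realise $S$ as a symmetric thickening of the core tree $T$ from Lemma~\ref{lem:core_tree}, take the intersection with the plane of symmetry as a first approximation to $\mathcal W$, and then add extra arcs to deal with the closed-curve components arising at the handles. However, there is a genuine gap at exactly the step you flag as the anticipated obstacle, and it is not merely ``a finite topological statement about genus-one surfaces with at most three boundary circles.'' The elements of $\mathcal W$ must be \emph{proper} arcs of $S$, so each handle-killing arc must run from its handle all the way out to an end of $S$; an arc with endpoints on the cuffs of a compact handle piece is not proper. When $S$ has infinite genus, you must therefore construct infinitely many such escaping arcs simultaneously, all simple, mutually disjoint, disjoint from the seam system, and locally finite --- and many of them may be forced to traverse the same pants pieces or exit through the same end. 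Your phrase ``adding a few matching arcs in adjacent pieces if necessary'' hides precisely this global problem. The paper resolves it with a \emph{combing} of the rooted tree $T$: a consistent choice, at each vertex, of an edge pointing away from the root, so that all escaping arcs follow geodesic rays to ends, turn the same way at each vertex, and hence can be chosen disjoint; local finiteness at each cusp follows because only the finitely many vertices between the root and that cusp can send arcs out of it. Some such device is needed for your argument to close.

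A secondary problem is your final verification. You assert that the two components of $S\setminus\mathcal W$ are $S\cap\{z>0\}$ and $S\cap\{z<0\}$, exchanged by $\iota$. Without the extra arcs these two halves are not even separate components --- each handle connects them, which is exactly why $S\cap P$ contains closed curves there --- and once the extra arcs are added they do not respect the reflection symmetry (in the paper, one of the two arcs at each handle avoids the plane entirely and the other crosses it exactly once), and they further subdivide the pieces they pass through, so ``one polygonal disk per piece per side'' fails. The induction along $T$ can still be made to work, but it has to track how the escaping arcs cut the halves, as in the paper's three-stage cut along $\W_1$, then $\W_2$, then $\W_3$.
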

\begin{proof}
Using Lemma~\ref{lem:core_tree}, we can construct a rooted core graph
$T$ so that \mbox{$S(T)\cong S$}.  However, by considering the proof
of the lemma, we can see that more is true: we can take $T$ to
be embedded in $\R^3$ in the vertical plane $\{x=0\}$.  Let
$\Delta$ denote this plane.  Further,
when we are building $S(T)$ and replacing each vertex by the appropriate
subsurface, we can ensure that the surface is cut exactly vertically
in half by $\Delta$.  Note that when we see each subsurface
as constructed out of pants, $\Delta$ intersects each pair
of pants in its seams.  Henceforth, we will equate $S$ and $S(T)$
and imagine that $S$ is itself given by the embedded surface $S(T)$.

The set $\Delta \cap S$ is a set of proper simple arcs and simple closed curves. We denote by $\W_1$ the subset of $\Delta \cap S$  which contains all the arcs of $\Delta \cap S$ and no curve.
Cut $S$ along $\W_1$: the complement $S - \W_1$ has either one or two
connected components. More precisely, if $S$ has positive genus, then at
least one of the elements of $\Delta \cap S$ is a simple closed curve, and
$S-\W_1$ is connected. If $S$ has no genus, then $\Delta \cap S = \W_1$ and
$S-\W_1$ has two connected components, which are both simply connected.
If $\Delta \cap S$ contains no closed curve, then we set $\W=\W_1$ and we
are done.  Otherwise, we continue, and we define sets $\W_2$ and $\W_3$,
as follows.

Note that the construction and rooted structure of
$T$ means that every vertex in $T$ lies on at least one ray from
the root to an end of $T$.  That is, at every vertex there is
at least one edge which points away from the root.  Choose
such an edge at each vertex.  The choice of all these edges
gives a ``combing'' of $T$: if we start at any vertex and follow
these edges, we will follow a geodesic ray towards an end of $T$,
and if any two such paths from two different vertices ever coincide,
they will forever after agree.  Observe that this construction
can produce rays which exit to an unmarked $1$-valent vertex of $T$ (a cusp
of $S(T)$), but for every cusp, only finitely many paths can exit
out that cusp (exactly the paths from the vertices between
the root and the unmarked $1$-valent vertex).

\begin{figure}[htb]
\labellist
\pinlabel $\alpha_2$ at 870 665
\pinlabel $\alpha_3$ at 510 410
\endlabellist
\centering
\vspace{0.2cm}
\includegraphics[scale=0.2]{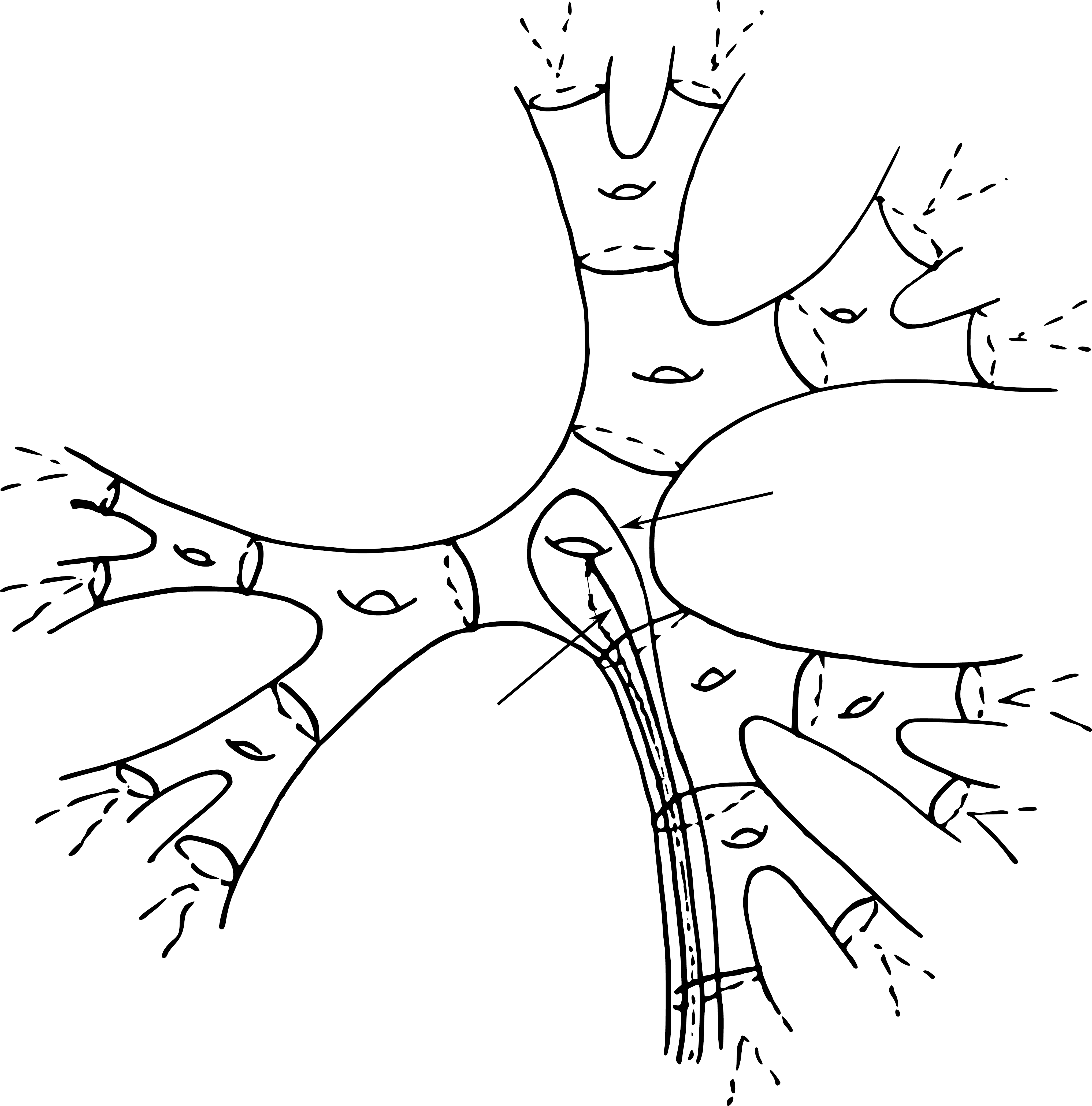}
\caption{Choosing arcs $\alpha_2$ and $\alpha_3$ at every genus in $S$.}
\label{figu:W2}
\end{figure}

\begin{figure}[htb]
\labellist
\endlabellist
\centering
\includegraphics[scale=0.1]{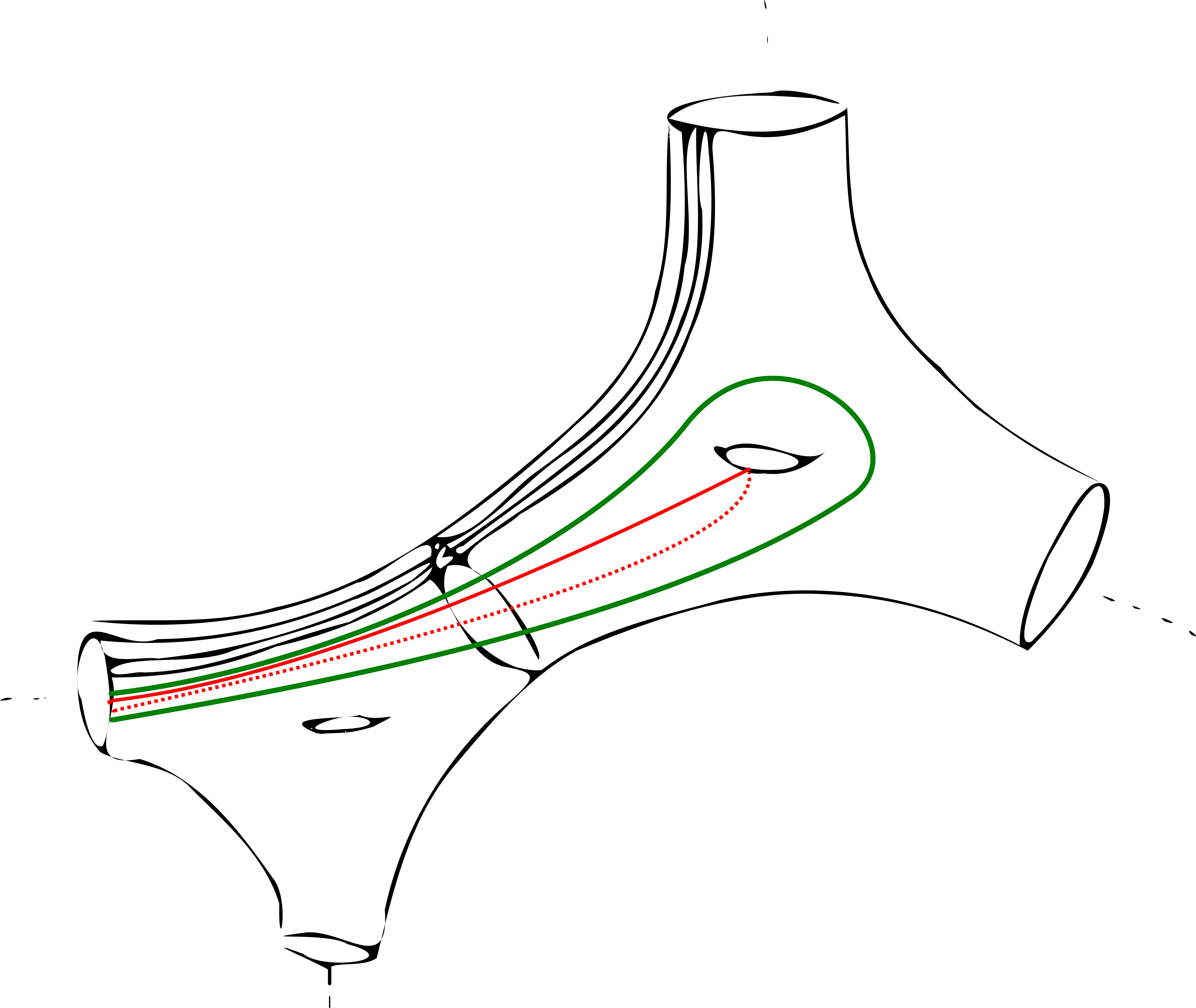}
\caption{Choosing $\alpha_2$ (in green) and $\alpha_3$ (in red)}
\label{figu:W3}
\end{figure}

Now, $\Delta \cap S$ has a genus associated with every marked
vertex in $T$.  
On the associated genus in $S$, choose two arcs $\alpha_2$ and
$\alpha_3$ as in Figure~\ref{figu:W2}, where these arcs
follow the combing we have chosen.  We may choose these arcs
so that $\alpha_3$ intersects $\Delta$ exactly once and $\alpha_2$
does not intersect $\Delta$, as shown.
The collection of all of these arcs can
be chosen to be simple.  This is illustrated by Figure~\ref{figu:W3}
and is because the arcs follow the combing: at every vertex, all
the incoming arcs will turn the same, consistent, way.
Denote by $\W_2$ the set of $\alpha_2$'s and by $\W_3$ the set of
$\alpha_3$'s. 
As discussed above, for any cusp, only finitely many of these
arcs can exit out that cusp, so these sets are locally
finite.

\begin{figure}[htb]
\labellist
\endlabellist
\centering
\includegraphics[scale=0.2]{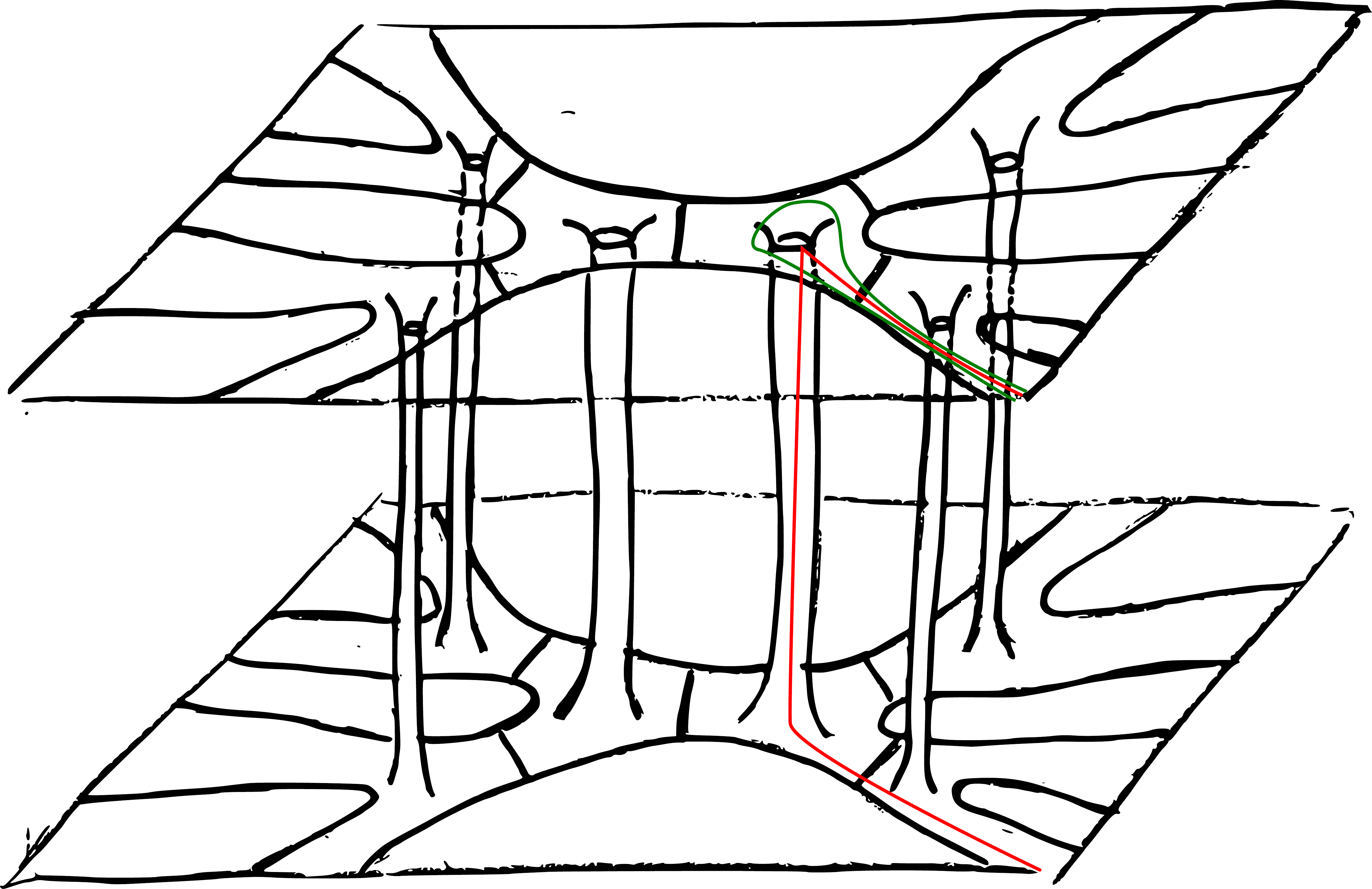}

\caption{The surface $S$ after cutting along $\W_1$, and 
example arcs $\alpha_2$ (green) and $\alpha_3$ (red).}
\label{figu:cut1}
\end{figure}

\begin{figure}[htb]
\labellist
\endlabellist
\centering
\includegraphics[scale=0.15]{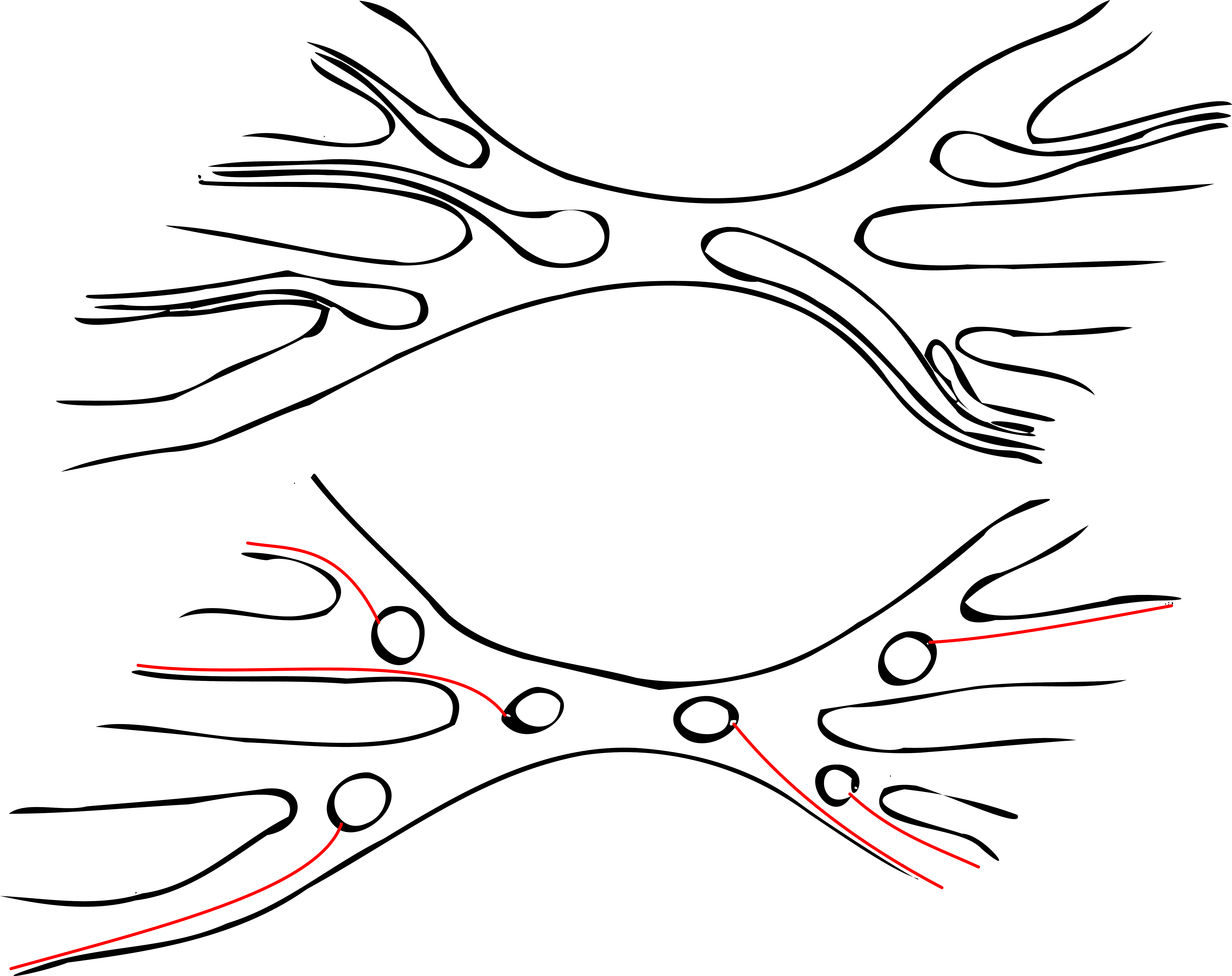}

\caption{The two components of the surface $S$ after cutting along
$\W_1 \cup \W_2$, and the arcs of $\W_3$ (in red)}
\label{figu:cut2}
\end{figure}

Denote by $\W$ the union $\W_1 \cup \W_2 \cup \W_3$. By construction, $\W$
is a locally finite set of proper simple arcs. Moreover, we claim that
the complement of $\W$ has exactly two components, and both of them are
simply connected.  This is illustrated in Figures~\ref{figu:cut1}
and~\ref{figu:cut2}. We get Figure~\ref{figu:cut1} after cutting along
$\W_1$: the surface has two components which are connected by tubes
corresponding to the genus. After cutting along $\W_2$ (the green arcs),
we get Figure \ref{figu:cut2}: the two components are disconnected.
The one on the top is homeomorphic to a disk, and the other one is
homeomorphic to a disk with punctures. Now when we cut along $\W_3$
(the red arcs), the second component becomes homeomorphic to a disk.
\end{proof}

\section{Hyperbolic infinite type surfaces}
\label{section:hyperbolic}

In this section, we make the topological surfaces we constructed
in the previous section rigid by producing a hyperbolic metric.

\begin{theorem}\label{thm:hyperbolic}
Let $S$ be a surface with at least one end which is not a disk or
annulus.  Then $S$ admits a complete hyperbolic metric of the first
kind, and there is a fundamental polygon for $S$ which is geodesic
and has no vertex in the interior of the hyperbolic plane.
Further, every isolated puncture is a vertex on this fundamental polygon,
and for any particular isolated puncture, the polygon can be chosen so that
the Fuchsian group for the polygon has a parabolic element fixing
the puncture.
\end{theorem}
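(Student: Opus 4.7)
I begin by applying Lemma~\ref{lem:core_tree} to realize $S = S(T)$ for a rooted core tree $T$, and then Lemma~\ref{lem:proper_arcs} to produce the locally finite family $\W = \W_1\cup\W_2\cup\W_3$ of mutually disjoint proper arcs whose complement is two open disks. Refine the induced decomposition of $S$ into a pants decomposition $\P$ by cutting each marked ``genus'' block at a marked vertex of $T$ into pairs of pants along simple closed curves, arranging that every pair of pants has its three seams lying on arcs of $\W$ (so that $\W$ still cuts $S$ into two disks, each tiled by hexagons, one per pair of pants). Put a standard hyperbolic structure on each $P\in\P$ in which every non-degenerate boundary component has length $1$ and every degenerate boundary component is a cusp, and glue along shared boundaries with zero twist, so that opposite seams of adjacent pants concatenate into a single geodesic arc.

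The resulting hyperbolic metric on $S$ is complete, since each pair of pants is complete and the gluings are isometric; moreover it is of the first kind, because no boundary component of any pair of pants survives as a geodesic boundary of $S$ (each is either glued to a neighbor or is a cusp), so $S$ has no funnel end, and therefore the uniformising Fuchsian group $\Gamma$ has Nielsen core equal to all of $\H^2/\Gamma$ and hence limit set equal to $\partial\H^2$. Now replace each arc of $\W$ by its unique geodesic representative in its proper homotopy class rel.\ ends: the arcs of $\W_1$ are already geodesic since they coincide with seams of $\P$, and the arcs of $\W_2$ and $\W_3$ are straightened rel.\ their ideal endpoints. The straightened family $\W^{*}$ is still locally finite, mutually disjoint, and cuts $S$ into two open disks. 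Lifting one such disk to $\H^2$ produces a fundamental polygon $D$ with geodesic sides; its vertices are the ideal endpoints of the arcs of $\W^{*}$, all lying on $\partial\H^2$ — either as parabolic fixed points above isolated punctures, or as ideal limit points above non-isolated ends. Since every isolated puncture of $S$ is a cusp of at least one pair of pants in $\P$ and hence an endpoint of at least one arc of $\W_1$, every isolated puncture contributes a vertex of $D$.

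For the last assertion, fix the chosen isolated puncture $p$ and revisit the construction of Lemma~\ref{lem:proper_arcs}: choose the root of $T$ and the combing so that two distinct arcs of $\W_1$ exit the surface at $p$, lying on opposite sides of the pair of pants containing $p$. In $D$, the lift $\tilde p\in\partial\H^2$ of $p$ is then a vertex at which two sides of $D$ meet, and the side-pairing identifying these two sides is precisely the generator of the parabolic subgroup of $\Gamma$ fixing $\tilde p$; this is the required parabolic element. The main technical obstacle is the first-kind property in the infinite-type case, where $S$ may have infinite area: here I rely on the standard fact that a complete hyperbolic surface obtained by gluing pants without any free geodesic boundary has Fuchsian group of the first kind, which is what the pants-and-cusps construction guarantees.
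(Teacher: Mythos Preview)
Your approach mirrors the paper's --- core tree, pants with cuff length $1$ and matched seams, straighten $\W$ --- but there is a genuine gap at the fundamental-polygon step. Lifting \emph{one} of the two complementary disks does not give a fundamental domain for $\Gamma$: the two disks $A$ and $B$ together tile $S$, so a fundamental domain must contain one lift of each. The paper glues $A$ and $B$ along a single chosen arc $\gamma\in\W$ to obtain one simply connected region $D$; each remaining arc of $\W$ then appears twice on $\partial D$ (once as a side of $A$, once as a side of $B$), and the side-pairings identify these copies. Your parabolic argument repairs itself after this fix: choose the gluing arc $\gamma$ to be one of the two seams meeting $p$, and the side-pairing carrying one copy of the other seam at $p$ to its partner is the desired parabolic generator.

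Two smaller points where the paper is sharper. For the first-kind assertion, the paper avoids the ``no funnel $\Rightarrow$ first kind'' statement you invoke (which, as you acknowledge, is delicate for infinitely generated $\Gamma$) by observing directly that the injectivity radius of $S$ is uniformly bounded above --- every point lies in one of finitely many isometry types of model pants --- whereas a nonempty domain of discontinuity would force the injectivity radius to be unbounded. And to conclude that the straightened family $\W^{*}$ is still disjoint and still cuts $S$ into two disks, it is not enough to straighten each arc separately in its proper homotopy class; the paper invokes Lemma~\ref{lem:isotopy_of_arcs} to obtain an \emph{ambient} isotopy carrying $\W$ to geodesics, which automatically preserves the topology of the complement.
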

\begin{proof}
The proof is essentially the observation
that we can perform the topological construction of the previous
section in such a way that we produce a hyperbolic surface and the
desired fundamental polygon.  We will follow the proof of
Lemma~\ref{lem:proper_arcs} and describe
how to make it hyperbolic.

Given a surface $S$, we obtain the rooted core tree $T$ from
Lemma~\ref{lem:core_tree}.  As in the proof of
Lemma~\ref{lem:proper_arcs}, we embed $T$ into $\R^3$ such that
it lies in the vertical plane $\{x=0\}$ and all its ends go to
infinity, and then we build the surface $S(T)$ embedded in $\R^3$.
Recall that we can take $S(T)$ to be built out of pairs of pants:
the unmarked vertices are exactly
pairs of pants, but the $1$-, $2$-, and $3$- valent vertices
are replaced by surfaces which can be decomposed into $1$, $2$, and $3$
pairs of pants, respectively.  We can take the intersections of
these pairs of pants with the plane $\{x=0\}$ to be their seams.

It is a basic fact from hyperbolic geometric that there is a unique
hyperbolic pair of pants with specified cuff lengths (the cuff length
is allowed to be zero, meaning that the boundary is a cusp).  Therefore,
specifying that all the cuff lengths of all the pants in $S(T)$ are $1$,
except the cusps which are length $0$, uniquely chooses a
hyperbolic structure on all the pants.  Additionally, we have defined
what the seams of the pants are by their intersection with the vertical
plane.  When we glue the hyperbolic pants to create $S(T)$, we require
that the seams match up.  This completely specifies a hyperbolic
structure on $S(T)$.  Recall the similarities to the Nielsen-Thurston
length and twist parameters to obtain coordinates on the moduli space
of hyperbolic structures of closed surfaces.

It remains to show that this metric is complete and of the first kind.
First note that all the pants composing $S(T)$ are complete metric spaces.
Now consider a closed and bounded subset $C$ of $S(T)$.  Then
$C$ is included in the union of only finitely many of these pairs of pants.
Therefore $C$ is compact.  By the Hopf-Rinow theorem, because $S(T)$
is a connected Riemannian manifold, the metric on it is complete.

Next, observe that the injectivity radius of $S(T)$ is uniformly bounded
above because every point lies on a model pair of pants.  Denote by
$\Omega$ the complement of the closure of the limit set of its
fundamental group in $\partial\H^2$.  If $\Omega$ were
non-empty, then the injectivity radius would be unbounded.  Hence
the metric is of the first kind.

Using the same argument as Lemma~\ref{lem:proper_arcs}, we obtain
a set of proper arcs $\mathcal W$ whose complement has two components
$A$ and $B$,
both of which are simply connected.  Lemma~\ref{lem:isotopy_of_arcs}
allows us to choose these arcs to be geodesic.  These two
disks $A$ and $B$ and the geodesic arcs separating them become our desired
fundamental polygon after choosing any geodesic in $\mathcal W$
and gluing $A$ and $B$ together along it to obtain a single
simply connected region.  Note that by construction, each cusp is cut
in half by the $\mathcal W$ because $\mathcal W$ contains the
two seams which approach the cusp, so each vertex becomes a vertex of
the fundamental polygon.  Furthermore, we can choose any geodesic in
$\mathcal W$ along which to glue $A$ to $B$, so if we are given a
particular cusp $p$, we can choose one of its seams to glue along.
Now $p$ is a vertex of the fundamental polygon, but also one of the
elements of the associated Fuchsian group is a parabolic element
which moves one of the seams at $p$ to the other one and fixes $p$.
\end{proof}

\part{Definition of the two actions and associated tools}

Recall that this part is devoted to generalizing the results
from~\cite{boundary} to the more general class of surfaces
we are considering.  Because many of the arguments
apply verbatim, we will heavily abbreviate our proofs
by relying on the text from~\cite{boundary}.  This makes
the current paper not self-contained, but it avoids
silly duplication.  We hope that indicating the differences
from~\cite{boundary} may actually be more useful, as it highlights
what features are necessary for what arguments.

\section{Conical cover}

\subsection{Equators} 

Let $S$ be a hyperbolic surface with an isolated marked puncture $p$.
By Theorem~\ref{thm:hyperbolic}, there is a geodesic fundamental polygon
for $S$ with no vertex in the interior of $\H^2$ and such that
there is a parabolic element of the Fuchsian group which fixes $p$.
The boundary geodesics in this fundamental polygon (which are two
copies of the arcs $\W$ constructed in the proof of
Theorem~\ref{thm:hyperbolic})
will be very useful.  Let this set of boundary geodesics be the \emph{equator} of $S$.  The name comes from the
special case where $S$ is a sphere minus a Cantor set and an isolated
point; see~\cite{Juliette,boundary}. 
We will soon see how this equator can help us to encode geodesic loops and rays on the surface in a unique way.

\subsection{Conical cover}

\begin{figure}[htb]
\labellist
\endlabellist
\centering
\includegraphics[scale=0.6]{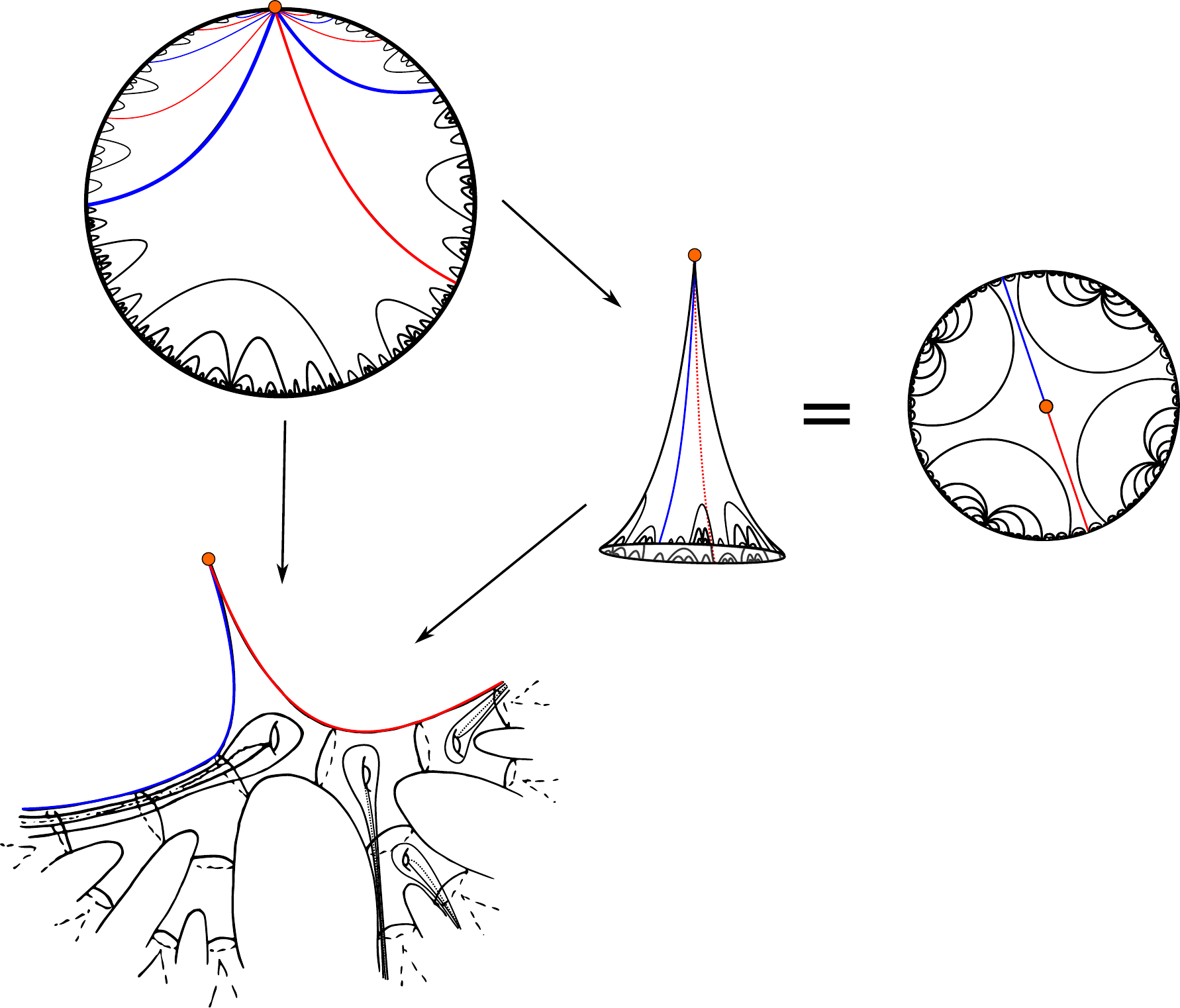}

\caption{A surface with an equator, its universal cover and two versions of the associated conical cover}
\label{figu:conical_cover}
\end{figure}

Our goal is to study rays and loops which start at a marked point $p$
in a surface $S$.  We have constructed a nice fundamental polygon
for $S$, but the universal cover has many copies of this polygon, 
and it's tedious to keep this in mind.  Therefore, we quotient to produce
a simpler \emph{conical cover}.  Let $S$ be a hyperbolic surface with
marked point $p$.  Theorem~\ref{thm:hyperbolic} gives a special
fundamental polygon which has a parabolic element at $p$.
Technically, this parabolic element acts at every lift of $\tilde{p}$:
pick some lift $\tilde{p}$ and quotient $\H^2$ by just this
parabolic element.  The result is the conical cover at $p$.
See Figure~\ref{figu:conical_cover}.  It is homeomorphic to a disk
with a puncture corresponding to $p$.  We denote this cover
by $\pi:\tilde{S}\to S$ and its (circle) boundary by $\partial\tilde{S}$.
Note the critical feature of the conical cover: any ray or loop
from the marked puncture has a unique preferred lift.

\section{Loop graphs}\label{section:loop_graphs}

\subsection{Different types of rays on the surface}
We define short rays, loops and long rays as we did in \cite{boundary}.

\subsubsection{Short rays and loops}
Denote by $E(S)$ the set of ends of $S$.
An embedding $\gamma$ of the open segment $]0,1[$ in $S$ is said to be:
\begin{itemize}
\item A \emph{loop} if it can be continuously extended in $S \cup E(S)$ by $\gamma(0)=\gamma(1)=\{p\}$.
\item A \emph{short ray} if it is proper with one end at $p$.
That is, if it can be continuously extended in $S \cup E(S)$ by $\gamma(0)=\{p\}$ and $\gamma(1) \in E(S)-\{p\}$.
\end{itemize}

We are interested in loops and short rays only up to isotopy. Note that the isotopies fix $E(S)$ pointwise. In particular, all rays in the same isotopy class have the same endpoints. Note also that the set of short rays might be empty (this happens when $p$ is the only end of the surface). Hereafter, we conflate a short ray or loop with its isotopy equivalence class. We say that two short rays or loops are disjoint if there are disjoint rays or loops in their equivalence class (i.e. if they can be made disjoint by separate isotopies). Given a representative of a short ray $r$, there is a distinguished lift $\tilde{r}$ from the base point $\tilde{p}$ in the conical cover.  The ray $\tilde{r}$ limits to some point $q \in \partial\tilde{S}$ (by definition, a lift of the endpoint of $r$ in $E(S)$).  
By Lemma~\ref{lem:isotopy_of_arcs}, the equivalence class of $r$ is specified by $q$, and there is a unique geodesic in the equivalence class of $r$ with limit point $q$ which we can choose as the class representative.  Similarly, given a loop, there is a well-defined limit point in $\partial\tilde{S}$ (which is a lift of $p$, but not the distinguished lift $\tilde{p}$), and we can choose a class representative which is a geodesic in the conical cover.  Note that we conflate this geodesic lift and its image in $S$. 

By construction our equator is locally finite and also finitely many
equator arcs exit to the marked puncture.  Therefore, there are only
finitely many complementary regions locally around $p$.
It is often useful to record a loop or ray combinatorially.  We can do
this by recording which of the finitely many
complementary regions the loop or ray leaves $p$ in, and then
which equator segments it crosses (and in which direction),
and finally which end it
exits the surface out of.  See the further discussion below and
in the definition of $k$-beginning.

\subsubsection{Long rays}

Let $\tilde\gamma$ be a geodesic ray from $\tilde p$ to $\partial\tilde{S}$.  If $\gamma = \pi(\tilde{\gamma})$ is simple and $\gamma$ is not a short ray or a loop, then $\gamma$ is called \emph{long ray}.  Defining a notion of ``isotopically
disjoint'' for topological long rays is difficult, because they don't
have an endpoint in the surface which the isotopy must fix.  We
sidestep the issue by forcing our long rays to be geodesic, and
they are disjoint exactly when these geodesic representatives are
disjoint.  The distinction between short and long rays is their
properness: short rays are proper, while long rays are not.
As with short rays and loops, it is often useful to specify a
long ray with combinatorial information.  Similarly, we can record
a long ray by recording which complementary region around $p$
it leaves from, followed by the sequence of oriented
equator crossings.
Note that because a long ray is not proper, but it is geodesic and limits
to a point on the boundary of the conical cover, it
must cross the equator infinitely many times.

\subsection{Cover-convergence and $k$-beginning}
\label{section:cover_convergence}

The conical cover is useful to us because its boundary is
a circle on which each geodesic from $p$ has an associated limit point.

\begin{definition} We say that a sequence of rays or oriented loops $(x_n)$ \emph{cover-converges} to a geodesic (ray or loop) $l$ on the surface if the sequence of endpoints of the $\tilde{x_n}$ on the boundary of the conical cover converges to a point $q$ such that the image $\pi( (\tilde{p} q) )$ of the geodesic $(\tilde{p} q)$ by the quotient map of the covering is $l$.
\end{definition}

\begin{lemma}\label{lemma:simple_is_compact}
Let $E \subseteq \partial \tilde{S}$ be the set of endpoints of short rays, loops, and long rays (i.e. the set of all endpoints of geodesics from $\tilde{p}$ whose projection to $S$ is simple).  Then $E$ is compact.  The endpoints of loops are isolated (are not accumulation points of $E$).
\end{lemma}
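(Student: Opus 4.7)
I would prove the two assertions separately. Closedness of $E$ in the compact circle $\partial\tilde{S}$ (giving compactness) follows from a standard limiting argument, while the isolation of loop endpoints requires an explicit picture of how nearby geodesics interact with the cusp at a lift of $p$.

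For closedness, suppose $q_n \to q$ with each $q_n \in E$, and let $\tilde{\gamma}_n$ (resp.\ $\tilde{\gamma}$) be the geodesic in $\tilde{S}$ from $\tilde{p}$ to $q_n$ (resp.\ $q$), so that $\tilde{\gamma}_n \to \tilde{\gamma}$ uniformly on compact sets. Passing to lifts $\hat{\gamma}_n \to \hat{\gamma}$ in $\H^2$, non-simplicity of $\pi(\tilde{\gamma})$ would give parameters $t_1 \ne t_2$ and a nontrivial $g \in \pi_1(S)$ with $g\hat{\gamma}(t_1) = \hat{\gamma}(t_2)$. I would first argue that this self-intersection must be transverse: otherwise two geodesics tangent at a common point must coincide, forcing $g$ to stabilize $\hat{\gamma}$ setwise and fix its endpoint $\tilde{p}$; but the stabilizer of the parabolic fixed point $\tilde{p}$ in $\pi_1(S)$ is $\langle P\rangle$, whose nontrivial elements act without fixed geodesic through $\tilde{p}$, a contradiction. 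Since transverse intersections persist under $C^1$-convergence, $\pi(\tilde{\gamma}_n)$ would inherit a self-intersection for $n$ large, contradicting $q_n \in E$. Hence $\pi(\tilde{\gamma})$ is simple and $q \in E$.

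For isolation, let $q \in E$ be the endpoint of a loop, so $q$ corresponds to a parabolic fixed point of some nontrivial $P_q \in \pi_1(S)$. I would pick coordinates on $\H^2$ sending $q$ to $\infty$ and $P_q$ to the translation $z \mapsto z+T$, and fix a horoball $\{y > c'\}$ at $q$ whose image in $S$ is a cusp neighborhood of $p$. In these coordinates the geodesic from $\tilde{p}$ to a nearby $q' \ne q$ is a Euclidean semicircle of radius $R$, with $R \to \infty$ as $q' \to q$; the portion of this semicircle above $y = c'$ has horizontal span $2\sqrt{R^2 - c'^2}$. Such an arc projects injectively to the cusp cylinder $\{y > c'\}/\langle z \mapsto z+T\rangle$ only if its horizontal span is at most $T$, forcing $R \le \sqrt{(T/2)^2 + c'^2}$. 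For $|q'-q|$ small enough this bound is violated, so the projection to $S$ self-intersects inside the cusp at $p$ and $q' \notin E$. Thus a small enough neighborhood of $q$ meets $E$ only at $q$.

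The main obstacle is the isolation step: compactness reduces almost immediately to the observation that the parabolic structure at $\tilde{p}$ forbids tangential self-intersections, whereas isolation demands the quantitative input that a geodesic can wind only a bounded amount around a cusp while remaining simple. The horoball computation above is the cleanest way I see to convert closeness of $q'$ to $q$ on $\partial\tilde{S}$ into deep penetration of the associated geodesic into the cusp at $p$, and hence into forced non-simplicity.
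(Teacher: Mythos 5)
Your proof is correct, and it supplies in full the standard argument that the paper simply outsources to Lemma~2.4.2 of \cite{boundary}: closedness of $E$ via persistence of (necessarily transverse) self-intersections under geodesic convergence, and isolation of loop endpoints via the horoball computation showing that a geodesic landing too close to a parabolic fixed point must wind more than a full period around the cusp at $p$. The only elisions are routine: in the tangency case you should also rule out $g$ swapping the endpoints of $\hat\gamma$ (such a $g$ would be elliptic, impossible in the torsion-free group $\pi_1(S)$), and the step from ``horizontal span $>T$'' to an actual pair of points on the arc differing by exactly $T$ deserves the one-line remark that points at equal height on the two sides of the semicircle realize every horizontal separation up to the full span.
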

\begin{proof}
The proof of Lemma $2.4.2.$ of \cite{boundary}, where we only need to replace $\infty$ by $p$, gives us that the set of endpoints of simple geodesics from $p$ is compact. 

\end{proof}

\begin{lemma}\label{lemma:cover_converge_limit}
Let $(x_i)$ and $(y_i)$ be sequences of short rays, loops, or long rays such that the pair $x_i$, $y_i$ is disjoint for all $i$.  Suppose that $(x_i)$ cover-converges to $x$ and $(y_i)$ cover-converges to $y$.  Then $x$ and $y$ are disjoint.
\end{lemma}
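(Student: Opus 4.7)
The plan is to work inside the conical cover $\tilde S$ and to combine two classical facts: (i) disjoint isotopy classes of simple arcs/loops on a complete hyperbolic surface of the first kind have disjoint geodesic representatives, and (ii) in $\H^2$ geodesics depend continuously on their ideal endpoints. First I would replace each $x_i$ and $y_i$ by its geodesic representative; for short rays and loops this is the unique geodesic in the isotopy class (Theorem~\ref{thm:hyperbolic} together with the uniqueness already invoked in Section~\ref{section:loop_graphs}), and long rays are geodesic by definition. A standard bigon-elimination argument in the hyperbolic surface $S$ then shows that the geodesic representatives of $x_i$ and $y_i$ are disjoint on $S$ away from $p$.

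Next I would lift everything to the conical cover. Let $\tilde x_i,\tilde y_i,\tilde x,\tilde y$ denote the preferred lifts at $\tilde p$. Since $x_i$ and $y_i$ are disjoint on $S$ away from $p$, any common interior point of $\tilde x_i$ and $\tilde y_i$ would project to a common point of $x_i$ and $y_i$; hence $\tilde x_i$ and $\tilde y_i$ are disjoint in $\tilde S$ away from $\tilde p$. Cover-convergence tells us that the endpoints of $\tilde x_i$ on $\partial \tilde S$ converge to the endpoint of $\tilde x$, and similarly for $\tilde y_i \to \tilde y$. Because $\tilde S = \H^2/\langle \gamma\rangle$ for the parabolic $\gamma$ fixing $\tilde p$, and geodesics in $\H^2$ with a common ideal endpoint (here $\tilde p$) vary continuously with the other endpoint, I get $\tilde x_i \to \tilde x$ and $\tilde y_i \to \tilde y$ uniformly on compact subsets of $\tilde S$. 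Projecting by $\pi$ yields $x_i \to x$ and $y_i \to y$ uniformly on compact subsets of $S$.

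The contradiction step is the punchline. Suppose $x$ and $y$ share a point $q \in S$ with $q \neq p$. Two distinct simple geodesics in a hyperbolic surface cannot be tangent (tangency would force coincidence), so the crossing at $q$ is transverse. Transverse crossing is stable under $C^0$-convergence on compact sets: a small neighborhood of $q$ in $S$ still contains a transverse intersection point of $x_i$ and $y_i$ for all large $i$, contradicting their disjointness. Hence either $x$ and $y$ coincide as geodesics (in which case the isotopy classes agree and they are ``disjoint'' in the degenerate sense the paper uses) or they are disjoint geodesics, which gives the conclusion of the lemma.

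The step I expect to require the most care is the continuity of geodesics at the cusp endpoint $\tilde p$: since $\tilde p$ is a parabolic fixed point rather than an interior point of $\H^2$, I would make the convergence $\tilde x_i \to \tilde x$ on compact subsets of $\tilde S$ rigorous by fixing a horoball neighborhood of $\tilde p$ (which descends to a cusp neighborhood in $\tilde S$) and noting that any compact set meets only finitely many horocycle levels; on its complement, continuity of geodesics in their two ideal endpoints in $\bar{\H^2}$ is standard and transfers directly. A secondary concern is the straightening claim in the first paragraph in the infinite-type setting, but bigon elimination applies verbatim because $S$ is complete hyperbolic of the first kind, so no complete simple bigons between simple geodesics exist.
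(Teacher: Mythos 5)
Your argument is correct and is essentially the argument behind the paper's proof (which is delegated entirely to the citation of Lemma~2.4.3 of \cite{boundary}): geodesics from the cusp depend continuously on their ideal endpoint, so cover-convergence gives convergence of the geodesic representatives on compact pieces, and a transverse crossing of $x$ and $y$ at a point $q\neq p$ would be stable under this convergence, contradicting the disjointness of $x_i$ and $y_i$. The one point worth keeping explicit, as you do, is that the stability step must be run down in $S$ (or between suitable translates of lifts in $\H^2$) rather than between the two preferred lifts in $\tilde S$ alone, since an intersection of $x$ and $y$ in $S$ need not be visible as an intersection of those particular lifts.
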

\begin{proof}
This is Lemma $2.4.3.$ of \cite{boundary}.
\end{proof}

\begin{definition}\label{def:k_begin}
We say that two oriented rays or loops \emph{$k$-begin} like each other
if they leave the marked puncture in the same region and cross the
same initial $k$ elements of the equator in the same direction.
\end{definition}

In Section~\ref{section:action_lox}, we will explicity write
the sequence of equator crossings for a ray; for simplicity, we
will often let the technicality about leaving $p$ in the same
complementary region be understood.  One imagines it could be recorded in the symbol
used for the first crossing.

\begin{lemma}\label{lemma:cover_converge_iff_k_begin}
Let $(x_i)$ be a sequence of rays or loops and let $x$ be a long ray.  Then $(x_i)$ cover-converges to $x$ if and only if for all $k$ there is an $I$ so that for all $i \ge I$, we have $x_i$ $k$-begins like $x$.
\end{lemma}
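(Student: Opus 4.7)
The plan is to work in the hyperbolic structure on the conical cover from Theorem~\ref{thm:hyperbolic}, in which equator arcs lift to complete geodesics in $\tilde S \subset \H^2$. Write $q = \tilde x(\infty) \in \partial \tilde S$ and $q_i = \tilde x_i(\infty) \in \partial \tilde S$, so that cover-convergence of $x_i$ to $x$ is exactly $q_i \to q$ in the circle $\partial \tilde S$. Let $f_1, f_2, \ldots$ be the equator lifts crossed by $\tilde x$ in order; since $x$ is a long ray, there are infinitely many such crossings. Let $U_k \subset \partial \tilde S$ be the open arc bounded by the two endpoints of $f_k$ that contains $q$, so the $U_k$ are nested.

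For the direction $(\Rightarrow)$, the $k$-beginning condition is an open condition on the endpoint: the choice of initial wedge at $\tilde p$ is locally constant on the complementary arcs of $\partial \tilde S$ cut out by the finitely many equator lifts incident to $\tilde p$, and each additional specified crossing $f_j$, given the earlier crossings, restricts the endpoint to an open sub-arc. Hence the set of endpoints whose geodesic from $\tilde p$ $k$-begins like $\tilde x$ is an open neighborhood of $q$ in $\partial \tilde S$, which contains $q_i$ for all large $i$ once $q_i \to q$.

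For the direction $(\Leftarrow)$, I would use the basic fact that two geodesics in $\H^2$ cross at most once. Since $\tilde x_i$ $k$-begins like $\tilde x$, the lift $\tilde x_i$ crosses $f_k$ in the same direction as $\tilde x$ and cannot cross back, so its endpoint $q_i$ lies in $\overline{U_k}$. Thus for each $k$ we have $q_i \in \overline{U_k}$ eventually, and the proof reduces to showing the nested arcs $\overline{U_k}$ shrink to $\{q\}$, after which $q_i \to q$ follows.

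The main obstacle is this shrinking claim. The plan is to argue by contradiction: a second point $q^* \in \bigcap_k \overline{U_k}$ would force the geodesic $[\tilde p, q^*]$ to cross exactly the sequence $f_1, f_2, \ldots$ in the same order as $\tilde x$. Since Theorem~\ref{thm:hyperbolic} provides a hyperbolic metric of the first kind, the orbit of $\tilde p$ under the Fuchsian group is dense in $\partial \H^2$, so the set of ideal vertices of the equator tiling is dense in $\partial \tilde S$; in particular the open arc between $q$ and $q^*$ contains such a vertex $v$. Among the equator lifts incident to $v$, a cyclic-order argument locates one whose opposite endpoint lies in the complementary arc of $\partial \tilde S \setminus \{q, q^*\}$; this lift then separates $q$ from $q^*$ in $\tilde S$, so that one of $[\tilde p, q]$ and $[\tilde p, q^*]$ crosses it while the other does not, contradicting the equality of crossing sequences. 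Verifying this cyclic-order argument for a suitable $v$ is the delicate point, and it is exactly where the first-kind property of the metric and the long-ray hypothesis combine.
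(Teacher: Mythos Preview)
Your forward direction $(\Rightarrow)$ is correct, and your setup for $(\Leftarrow)$---reducing to the shrinking claim $\bigcap_k \overline{U_k} = \{q\}$---is exactly right and matches the paper's approach.

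The problem is the shrinking argument itself. Your density-plus-cyclic-order route is both more complicated than necessary and genuinely incomplete. First, the claim that a second point $q^* \in \bigcap_k \overline{U_k}$ forces $[\tilde p, q^*]$ to have the \emph{same} crossing sequence as $\tilde x$ fails when $q^*$ happens to be an endpoint of some $f_k$: then $[\tilde p, q^*]$ is asymptotic to $f_k$ and has only a finite crossing sequence, so the ``equality of crossing sequences'' you want to contradict never gets off the ground. Second, even granting that, the existence of an equator lift incident to your vertex $v$ whose opposite endpoint lands in the complementary arc is not guaranteed by any cyclic-order argument you have actually written down, and the first-kind property does not supply it.

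The shrinking follows much more directly from a property you have not invoked: the equator is by construction a \emph{locally finite} family of arcs in $S$, so its lifts form a locally finite family of complete geodesics in $\tilde S$. If $\bigcap_k \overline{U_k}$ contained a second point $q^*$, then since the $\overline{U_k}$ are nested, the endpoint pairs $(\lambda_k^-, \lambda_k^+)$ converge monotonically to limits $(\lambda_\infty^-, \lambda_\infty^+)$ with $\lambda_\infty^- \le \min(q,q^*) < \max(q,q^*) \le \lambda_\infty^+$. Hence the distinct geodesics $f_k$ converge to a genuine geodesic $f_\infty \subset \tilde S$, so infinitely many equator lifts meet any compact neighborhood of a point on $f_\infty$, contradicting local finiteness. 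This is the mechanism the paper points to: it singles out that all vertices of the fundamental polygon are ideal (so each $f_k$ is a complete geodesic and the arcs $U_k$ are well-defined), after which local finiteness finishes. The first-kind hypothesis is not what is doing the work.
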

\begin{proof}
This is Lemma $2.4.5.$ of \cite{boundary}. The same proof works here because we assumed that the equator is a union of \emph{arcs}: equivalently, all the vertices of the chosen fundamental domain are on the boundary of the universal cover.
\end{proof}

\subsection{Loop and ray graphs} 
Here we define the several graphs we will consider in this paper.
\begin{definition} Let $S$ be a surface, and let $X$ be a collection of isotopy classes of simple curves or simple arcs in $S$. We define the following graph $X(S)$ associated to $X$:
\begin{itemize}
\item The vertices of $X(S)$ are the elements of $X$.
\item The edges of $X(S)$ are the pairs of homotopically disjoint elements of $X$.
\end{itemize}
\end{definition}

For example, the usual curve graph is $X(S)$ when $X$ is the set of isotopy classes of simple essential closed curves of $S$. 

In this paper, we will consider:
\begin{itemize}
\item The \emph{loop graph} $X(S)$, where $X$ is the set of isotopy classes of simple loops based on $p$. We denote it by $L(S;p)$.
\item The \emph{short-ray graph} $X(S)$, where $X$ is the set of isotopy classes of (simple) short rays based on $p$. We denote it by $R_s(S;p)$.
\item The \emph{short-ray-and-loop graph} $X(S)$, where $X$ is the set of isotopy classes of (simple) short rays and loops based on $p$. We denote it by $R_sL(S;p)$.
\item The \emph{completed ray graph} $X(S)$, where $X$ is the set of isotopy classes of (simple) short rays, loops and long rays based on $p$. We denote it by $\RGC(S;p)$.
\end{itemize}

We will prove that for any surface $S$ with at least one isolated puncture $p$, the loop graph $L(S;p)$ is Gromov-hyperbolic and quasi-isometric to the short-ray-and-loop graph $R_sL(S;p)$ and to the main connected component of the completed ray graph $\RGC(S;p)$. However, it is in general not true that the short-ray graph $R_s(S;p)$ is Gromov-hyperbolic (although it is true in the case of the plane minus a Cantor set studied in \cite{boundary} and \cite{Juliette}).  Because we always have a particular surface $S$ and
marked point $p$ in mind, we will often suppress the $p$ in the notation, 
e.g. writing $\RGC$ for the completed ray graph.

\subsection{Hyperbolicity of loop graphs} 
The following result was proved in a more general setting by Aramayona-Fossas-Parlier in \cite{Aramayona-F-P}. We give here another proof adapted to our setting. 

\begin{theorem}
Let $S$ be a connected surface with at least one isolated puncture $p$. The loop graph $L(S;p)$ is Gromov-hyperbolic.
\end{theorem}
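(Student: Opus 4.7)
The plan is to adapt the unicorn path machinery of Hensel--Przytycki--Webb, originally developed for the arc graph of a finite-type surface, to the setting of loops based at an isolated puncture $p$. The hyperbolic metric produced by Theorem~\ref{thm:hyperbolic} lets us pick a canonical geodesic representative of each loop, and hence put any finite collection of loops in pairwise minimal position. With minimal position in hand, the combinatorics of intersection points behaves just as in the finite-type case, and the proof is essentially local near intersections.

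First, I would define \emph{unicorn loops}. Given two simple geodesic loops $\alpha,\beta \in L(S;p)$ meeting transversally, and any intersection point $x \in \alpha \cap \beta$, let $\alpha_x$ (resp.\ $\beta_x$) denote the subarc of $\alpha$ (resp.\ $\beta$) running from $p$ to $x$. The concatenation $\gamma_x := \alpha_x \cdot \beta_x$ is a loop based at $p$; say that $x$ is \emph{admissible} if $\gamma_x$ is simple (after smoothing the corner at $x$). The admissible points inherit a natural linear order (induced by the order along $\alpha$, say), and the corresponding sequence of unicorn loops, together with $\alpha$ and $\beta$ as endpoints, forms a \emph{unicorn path} $P(\alpha,\beta)$ in $L(S;p)$.

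Next, I would verify the two properties required by the Bowditch / HPW criterion for hyperbolicity:
\begin{enumerate}
\item[(i)] Consecutive unicorn loops along $P(\alpha,\beta)$ can be realized disjointly, so $P(\alpha,\beta)$ is a $1$-Lipschitz path from $\alpha$ to $\beta$ in $L(S;p)$.
\item[(ii)] (1-slim unicorn triangles) For any third loop $\gamma$, every vertex of $P(\alpha,\beta)$ lies at distance at most a uniform constant $D$ from $P(\alpha,\gamma) \cup P(\gamma,\beta)$.
\end{enumerate}
Property (i) is immediate from the surgery construction. For (ii), one argues as in HPW by inducting on the intersection number $i(\alpha,\beta) + i(\alpha,\gamma) + i(\beta,\gamma)$, using the fact that every unicorn loop built from $\alpha$ and $\beta$ either avoids a unicorn loop from one of the other two paths, or else admits a surgery with a vertex of the other path that strictly reduces the total intersection number. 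Once (i) and (ii) are established, Bowditch's guessing-geodesics criterion (or equivalently, the direct HPW slim-triangle lemma) yields a uniform $\delta$ so that $L(S;p)$ is $\delta$-hyperbolic.

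The main obstacle will be confirming that the HPW surgery induction genuinely carries over to our infinite-type setting. In finite-type arguments one is occasionally tempted to invoke the Euler characteristic, e.g.\ to bound the complexity of intermediate surfaces, but the key unicorn estimates are purely local at intersection points and only require the loops and surgeries to lie in some compact subsurface containing all the relevant arcs. Since any two loops based at $p$ have finite geometric intersection, all surgeries take place in a finite-type subsurface, and the uniform constant $D$ obtained in HPW does not depend on the ambient topology. The only genuinely new verification is that admissible intersection points exist when $i(\alpha,\beta)\ge 1$, which follows by choosing $x$ to be the first intersection point along $\alpha$; simplicity of $\gamma_x$ is then immediate from minimal position. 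With these checks in place, hyperbolicity of $L(S;p)$ follows.
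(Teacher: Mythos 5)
Your proposal takes essentially the same route as the paper: the paper's proof simply defers to \cite{Juliette}, whose argument is precisely the Hensel--Przytycki--Webb unicorn-path surgery adapted to (oriented) loops based at $p$, with the key point being that the surgery estimates are local at intersection points and hence independent of the ambient infinite-type topology. Your sketch reconstructs that argument faithfully.
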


\begin{proof}
This theorem was proved in \cite{Juliette} for the specific case where $S$ is the plane minus a Cantor set ($p$ was then equal to $\infty$, and the loop graph was denoted by $X_\infty$). The proof followed the arguments given by Hensel, Przytycki and Webb in their proof of the uniform Gromov-hyperbolicity for curve complexes of finite type surface in \cite{Hensel-Przytycki-Webb}.
The same proof works in the general case (with the exact same arguments than Section~$3.1$ of \cite{Juliette}).
\end{proof}

\subsection{Actions}

\begin{lemma}\label{lemma:mod_acts}
The mapping class group $\Mod(S;p)$ acts by homeomorphisms on the boundary $\partial \tilde{S}$ of the conical cover.  It acts by isometries on $\RGC(S;p)$.
\end{lemma}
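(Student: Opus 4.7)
The plan is to build both actions by lifting a representative homeomorphism $\phi$ of a class $[\phi] \in \Mod(S;p)$ to the universal cover $\H^2$, descending to the conical cover, and restricting to the boundary circle. First I would choose a lift $\tilde\phi: \H^2 \to \H^2$ that fixes the distinguished lift $\tilde p \in \partial \H^2$; this is possible because $\phi$ fixes $p$. The lift normalizes the deck group, and because it fixes $\tilde p$, it conjugates the parabolic stabilizer $\langle \gamma_p \rangle$ of $\tilde p$ to itself. Hence $\tilde \phi$ descends to a homeomorphism $\bar\phi$ of $\tilde S = \H^2/\langle \gamma_p\rangle$, and since $\bar\phi$ preserves the cusp end at $\tilde p$, the standard compactification argument for an annular cusp extends it continuously to the boundary circle $\partial\tilde S$.

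Next I would check that this descends to a well-defined action of $\Mod(S;p)$. Two lifts of $\phi$ fixing $\tilde p$ differ by composition with an element of $\langle \gamma_p\rangle$, which acts trivially on $\tilde S$, so the choice of lift is irrelevant. If $\phi$ and $\phi'$ are isotopic through homeomorphisms fixing $p$, the isotopy lifts equivariantly to $\H^2$ with the trace of $\tilde p$ kept fixed throughout, descending to an isotopy between $\bar\phi$ and $\bar{\phi'}$ on $\tilde S$. The extensions to $\partial\tilde S$ are a priori only homotopic, but I would promote this to equality using density: the set of endpoints on $\partial\tilde S$ of geodesic lifts of (not necessarily simple) based loops at $p$ is the projection of the deck-group orbit of $\tilde p$ modulo $\langle\gamma_p\rangle$, which is dense in $\partial\tilde S$; the action on each such endpoint is determined by the action of $[\phi]$ on the corresponding isotopy class of loop (using Lemma~\ref{lem:isotopy_of_arcs} for uniqueness of the geodesic representative), hence by $[\phi]$ alone. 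Continuity of the boundary maps then forces the extensions to agree everywhere.

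For the action on $\RGC(S;p)$, a homeomorphism $\phi$ fixing $p$ sends isotopy classes of simple loops and simple short rays to isotopy classes of the same type. Long rays are characterized by their endpoints on $\partial\tilde S$ projecting to simple (non-proper) geodesic rays in $S$, so the boundary action constructed above sends long rays to long rays. Edges of $\RGC(S;p)$ encode disjointness of isotopy classes, which is a topological property preserved by homeomorphisms, so the action is by graph automorphisms and hence by isometries on the graph metric.

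The main obstacle, as indicated, is the well-definedness of the boundary action on isotopy classes: isotopic homeomorphisms of $S^1$ need not coincide, so the ambient-isotopy argument alone is insufficient. The remedy is the density argument using endpoints of lifted loops, which pins down the boundary map uniquely from the combinatorial data of how $[\phi]$ permutes the countable set of loop isotopy classes.
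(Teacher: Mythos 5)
The load-bearing step of your argument is the claim that the descended homeomorphism $\bar\phi$ of $\tilde S$ ``extends continuously to the boundary circle $\partial\tilde S$'' by a standard compactification argument, and this is precisely where there is a genuine gap. The paper flags this exact point in the remark preceding the lemma: geodesic rays in the conical cover are not mapped to geodesic rays by a homeomorphism, and for an \emph{infinite type} surface a lift of a homeomorphism to $\H^2$ (or to $\tilde S$) is not in general a quasi-isometry, so no Morse-lemma-type boundary extension is available. Nor is there a purely topological extension: $\tilde S$ is an open annulus, and a homeomorphism of an open annulus need not extend to a chosen boundary circle (e.g.\ a map that spirals infinitely as it approaches that end). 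Preserving the cusp end at $\tilde p$ only tells you $\bar\phi$ preserves the other end as an end, not that it extends to the circle at infinity. So as written, the existence of the circle homeomorphism --- which is the actual content of the lemma --- is asserted rather than proved.

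The repair is already sitting in your second paragraph, but you deploy it only for well-definedness under isotopy, assuming the extensions exist. Run it the other way, as the paper does: $[\phi]$ permutes the isotopy classes of based loops (simple and non-simple); each such class has a unique geodesic representative and hence a unique endpoint on $\partial\tilde S$ (a lift of $p$), and these endpoints are dense; the induced permutation of this dense set preserves cyclic order (because $\tilde\phi$ fixes $\tilde p$ and preserves the cyclic order of rays from it); an order-preserving bijection of a dense subset of $S^1$ onto a dense subset (apply the same to $\phi^{-1}$ for surjectivity of the image onto a dense set) extends uniquely to a homeomorphism of $S^1$. This \emph{defines} the boundary action directly from $[\phi]$, which simultaneously gives existence and well-definedness. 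Your remaining points --- that long rays are defined by their boundary endpoints and that disjointness is encoded in cyclic order, hence the action on $\RGC(S;p)$ is by graph isometries --- are correct and match the paper once the circle homeomorphism is properly constructed.
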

Note that even if every homeomorphism of $S$ lifts to the conical cover, it is a priori not obviously possible extend a homeomorphism of the cover to its boundary. Indeed, geodesic rays of the cover are not necessarily mapped to geodesic rays by homeomorphisms.
\begin{proof}
This is Lemma $2.6.1$ of \cite{boundary}. Although the proof is almost verbatim the same, because this action is central for us we recall it here in our context. First, we observe that $\Mod(S;p)$ preserves the set of all simple and non simple loops.  As subset of the boundary of the conical cover, the set of (oriented) simple and non simple loops is dense: this is because lifts of $p$ are dense. Next, as we require that any mapping class $\phi \in \Mod(S;p)$ fixes $p$, there is a well-defined lift $\tilde{\phi}:\tilde{S} \to \tilde{S}$ fixing $\tilde{p}$.  As any mapping class must preserve the cyclic order of geodesic rays from $\tilde{\infty}$, the mapping class must preserve the cyclic order of the dense set of loops endpoints and thus extend to a homeomorphism on the boundary $\partial\tilde{S}$.  To avoid issues of isotopy, our definition of a long ray is in terms of endpoints on $\partial\tilde{S}$.  Hence only now, after establishing that $\tilde{\phi}$ induces a homeomorphism on $\partial\tilde{S}$, can we observe that this implies that $\phi$ acts on the set of rays.  Furthermore, this action preserves disjointness between long rays and short rays and loops (because this can be defined in terms of cyclic orders on the boundary $\partial\tilde{S}$).  Thus $\phi$ induces an isometry on $\RGC(S;p)$.
\end{proof}

Note that it is not true in general that $\Mod(S;p)$ acts transitively on short rays. Indeed, if the set of punctures is not a Cantor set, $S$ can have different types of punctures (for example, some could be isolated, and some others could be not isolated). It is not possible to map a ray which ends on an isolated puncture to a ray which ends on a non isolated one. Hence Lemma $2.6.2.$ of \cite{boundary} is not true in the general case.

\subsection{Filling rays}

We define several notions of fillingness for long rays. Note that the definition of $k$-filling is different from \cite{boundary} (where $l_0$ was a ray).

\begin{definition}
A long ray $l$ is said to be:
\begin{itemize}
\item \emph{loop-filling} if it intersects every loop.
\item \emph{ray-filling} if it intersects every short ray.
\item \emph{k-filling} if 
       \begin{enumerate}
       \item There exists a loop $l_0$ and long rays $l_1,\ldots, l_k=l$ such that $l_i$ is disjoint from  $l_{i+1}$ for all $i\ge 0$.
       \item $k$ is minimal for this property.
      \end{enumerate}
\item \emph{high-filling} if it is ray-filling and not $k$-filling for any $k \in \N$.
\end{itemize}
\end{definition}

That is, a long ray is $k$-filling if it is at distance exactly $k$ from the set of loops in the graph $\RGC(S;p)$.  A long ray is high-filling if it is not in the connected component of $\RGC(S;p)$ containing the short rays.

\subsubsection{Some properties of filling rays}

\begin{lemma} \label{lemma:disjoint_from_loop_filling}
Let $L$ be a loop-filling ray.
If $l$ and $l'$ are two rays (short or long) disjoint from $L$, then $l$ and $l'$ are disjoint.
\end{lemma}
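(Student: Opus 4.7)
The plan is to argue by contradiction: assuming that the two rays $l, l'$ disjoint from $L$ actually cross each other, I will manufacture a simple essential loop based at $p$ which is still disjoint from $L$, contradicting the loop-filling hypothesis on $L$.

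First, I would replace each of $l, l', L$ by its geodesic representative (well-defined in the conical cover by the earlier discussion, and the geodesic representative realises the minimal intersection number; in particular the geodesic representatives of $l$ and $l'$ are disjoint from the geodesic $L$). Assume $l \neq l'$, since otherwise the statement is trivial, and suppose for contradiction that $l \cap l' \supsetneq \{p\}$. Walking along $l'$ from $p$, pick the first point $q \in l \cap l'$ different from $p$. Let $l_1 \subset l$ and $l'_1 \subset l'$ denote the initial sub-arcs from $p$ to $q$. By the minimality of $q$ along $l'$ we have $l'_1 \cap l_1 = \{p, q\}$, so the concatenation $\gamma := l_1 \cdot \overline{l'_1}$ is a simple loop based at $p$, and it is automatically disjoint from the geodesic $L$ because $l_1\subset l$ and $l'_1\subset l'$ both are.

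The heart of the argument is to check that $\gamma$ is essential (not null-homotopic rel $p$), so that it qualifies as a vertex of the loop graph. For this I would lift everything to the conical cover. The lifts $\tilde l,\tilde l'$ at $\tilde p$ are distinct geodesic rays, because $l\neq l'$ have distinct endpoints on $\partial\tilde S$. Let $\tilde q$ (resp.\ $\tilde q'$) be the endpoint of the lift of $l_1$ (resp.\ $l'_1$) starting at $\tilde p$; these are two lifts of $q$. If $\tilde q = \tilde q'$, then the hyperbolic geodesics $\tilde l$ and $\tilde l'$ would both pass through the pair $\tilde p, \tilde q$; but a complete geodesic in $\H^2$ is determined by two of its points, so we would deduce $\tilde l = \tilde l'$, contradicting $l\neq l'$. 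Hence $\tilde q\neq\tilde q'$. Now the lift of $\gamma$ starting at $\tilde p$ goes along $\tilde l_1$ to $\tilde q$, then backwards along the lift of $l'_1$ that ends at $\tilde q$; that lift starts at a translate $\tilde p''$ of $\tilde p$ by the deck group, and $\tilde p''=\tilde p$ would force $\tilde q = \tilde q'$, which we just excluded. So the lift of $\gamma$ is not closed, and $\gamma$ is essential.

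Thus $\gamma$ is a simple, essential loop based at $p$ admitting a representative (namely $l_1\cdot\overline{l'_1}$ itself) which is disjoint from $L$. Hence $\gamma$ and $L$ are joined by an edge in $\RGC(S;p)$, contradicting the assumption that $L$ is loop-filling. Consequently $l$ and $l'$ are disjoint, as claimed.

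The step I expect to need the most care is the essentiality of $\gamma$: one has to rule out that the two arcs $l_1$ and $l'_1$ from $p$ to $q$ are homotopic rel endpoints, and the cleanest way is the conical-cover argument above, which uses uniqueness of hyperbolic geodesics between two points together with the fact that distinct (long or short) rays correspond to distinct boundary points of $\tilde S$.
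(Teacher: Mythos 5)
Your proposal is correct and takes essentially the same route as the proof the paper defers to (Lemma~2.7.4 of \cite{boundary}): surgery at the first intersection point of $l$ and $l'$ produces a simple loop disjoint from $L$, and essentiality is checked by lifting to the conical cover, where two distinct geodesics from $\tilde p$ cannot share an interior point. The only detail left implicit is why a first intersection point along $l'$ exists (intersections cannot accumulate at the cusp $p$, since distinct simple geodesics entering the same cusp are disjoint in a small horoball neighborhood), which is standard.
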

\begin{proof}
This is Lemma $2.7.4$ of \cite{boundary}.
\end{proof}

It is not true in general that any ray-filling ray is also loop-filling: indeed, if a surface has genus, we can easily imagine  a long ray which spirals along the genus and isolates $p$ from the other punctures. Such a ray will be ray-filling, but not necessary loop-filling. Hence the first part of Lemma $2.7.2$ of \cite{boundary} is false in general.

\begin{corollary} \label{corollary:disjoint_from_loop_filling}
Let $l$ be a loop-filling ray which is not ray-filling, then every long ray disjoint from $l$ is not ray-filling.
\end{corollary}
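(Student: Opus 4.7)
The plan is to invoke Lemma~\ref{lemma:disjoint_from_loop_filling} directly; the corollary is essentially an immediate rewording once one unpacks the definitions.

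First, I would use the hypothesis that $l$ is not ray-filling to extract a witness short ray: by the definition of ray-filling, there must exist some short ray $r$ which is disjoint from $l$. This short ray $r$ will play the role of our ``obstruction'' to ray-fillingness for every competitor.

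Next, let $l'$ be any long ray disjoint from $l$. Since $l$ is loop-filling by assumption, and since both $r$ and $l'$ are rays (one short, one long) disjoint from the loop-filling ray $l$, Lemma~\ref{lemma:disjoint_from_loop_filling} applies and yields that $r$ and $l'$ are disjoint. Thus $l'$ admits a disjoint short ray, namely $r$, so by definition $l'$ fails to be ray-filling.

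There is no real obstacle here: the entire content is packaged inside Lemma~\ref{lemma:disjoint_from_loop_filling}, and the only thing this corollary does is observe that the specific short ray $r$ disjoint from $l$ works \emph{uniformly} as a disjointness witness for \emph{every} long ray disjoint from $l$. The only subtlety worth flagging is that the lemma is stated for rays which may be short \emph{or} long, so applying it to the mixed pair $(r, l')$ is legitimate and requires no separate argument.
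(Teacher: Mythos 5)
Your proof is correct and is exactly the intended argument: the paper simply defers to Corollary~2.7.5 of \cite{boundary}, whose content is precisely your two-step deduction (extract a short ray $r$ disjoint from $l$ from the failure of ray-fillingness, then apply Lemma~\ref{lemma:disjoint_from_loop_filling} to the pair $(r,l')$). Nothing is missing.
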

\begin{proof}
This is Corollary $2.7.5.$ of \cite{boundary}.
\end{proof}

\begin{lemma}\label{lemma:no_k_filling}
There exists no $k$-filling ray for $k> 2$.
\end{lemma}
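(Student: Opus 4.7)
The plan is to use Lemma~\ref{lemma:disjoint_from_loop_filling} to shortcut any chain realizing a high value of $k$, contradicting the minimality in the definition of $k$-filling.

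Suppose, for contradiction, that a $k$-filling ray $l$ exists for some $k \geq 3$. By definition, there is a loop $l_0$ and long rays $l_1, \ldots, l_k = l$ with $l_i$ disjoint from $l_{i+1}$, and $k$ is minimal with this property. I first observe that the minimality of $k$ implies, for each $i$, that $l_i$ is at distance exactly $i$ from the set of loops inside $\RGC(S;p)$ (the path provides an upper bound, and any smaller value would allow us to splice a shorter witness and contradict the minimality for $l_k$). In particular, $l_{k-1}$ is at distance $k-1 \geq 2$ from the set of loops, so $l_{k-1}$ is not disjoint from any loop. Equivalently, $l_{k-1}$ is loop-filling.

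Now I apply Lemma~\ref{lemma:disjoint_from_loop_filling} with $L = l_{k-1}$: the long rays $l_{k-2}$ and $l_k$ are both disjoint from the loop-filling ray $l_{k-1}$, hence $l_{k-2}$ and $l_k$ are disjoint from each other. But then the shorter chain
\[
l_0,\; l_1,\; \ldots,\; l_{k-2},\; l_k
\]
exhibits $l_k = l$ at distance at most $k-1$ from the set of loops in $\RGC(S;p)$, contradicting the minimality of $k$. Therefore no $k$-filling ray exists for $k > 2$.

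The only nontrivial step is justifying that $l_{k-1}$ is loop-filling, and this is really just the statement that each intermediate ray along a minimal chain must itself be $i$-filling (so $\geq 2$-filling as soon as $k \geq 3$), which falls out of the triangle inequality in $\RGC(S;p)$. The main work has already been done in Lemma~\ref{lemma:disjoint_from_loop_filling}; the present lemma is a short corollary of it.
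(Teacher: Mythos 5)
Your proof is correct and follows essentially the same route as the paper's (which defers to Lemma 2.7.6 of \cite{boundary} with $l_0$ replaced by a loop): the penultimate ray $l_{k-1}$ in a minimal chain must be loop-filling, and Lemma~\ref{lemma:disjoint_from_loop_filling} then lets you splice it out, contradicting minimality. The only cosmetic point is that you do not need the full claim that each $l_i$ is at distance exactly $i$ from the loops in $\RGC(S;p)$ (which quietly conflates graph distance with the length of a chain of long rays); the fact actually used — that $l_{k-1}$ meets every loop — follows directly, since a loop $a$ disjoint from $l_{k-1}$ would give the length-$2$ chain $a, l_{k-1}, l_k$.
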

\begin{proof}
This is Lemma $2.7.6.$ of \cite{boundary}, where we replace $l_0$ by a loop.
\end{proof}

\begin{lemma}\label{lemma:cliques}
Any connected component of $\RGC(S;p)$ containing a high-filling ray is a clique of high-filling rays.
\end{lemma}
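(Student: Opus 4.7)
The plan is to establish, in sequence, two claims about the connected component $C$ of $\RGC(S;p)$ containing a given high-filling ray $h$: first, every vertex of $C$ is itself high-filling; and second, every pair of vertices of $C$ is adjacent. The fundamental tool will be the observation that any high-filling ray is automatically loop-filling, since otherwise it would be $1$-filling, violating the definition. This unlocks Lemma~\ref{lemma:disjoint_from_loop_filling} as the main engine, applied with a high-filling ray in the role of $L$.

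First I would propagate high-fillingness along edges by induction on distance to $h$. Assume $v \in C$ is known to be high-filling and $v'$ is adjacent to $v$. Since $v$ is ray-filling and loop-filling, $v'$ can be neither a loop nor a short ray, hence it is a long ray. If some loop $l_0$ were disjoint from $v'$, the chain $l_0, v', v$ would exhibit $v$ as $2$-filling, a contradiction; so $v'$ is loop-filling. If $v'$ were not ray-filling, Corollary~\ref{corollary:disjoint_from_loop_filling} applied to $v'$ would force every long ray disjoint from $v'$---in particular $v$---to fail to be ray-filling, a contradiction; so $v'$ is ray-filling. Finally, if $v'$ were $k$-filling via a witness $l_0, l_1, \dots, l_k = v'$, appending $v$ would make $v$ a $(k+1)$-filling ray, again a contradiction. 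All four conditions are satisfied, so $v'$ is high-filling.

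Having established that every vertex of $C$ is high-filling and hence loop-filling, I would turn to adjacency. For any two vertices $u, v \in C$ joined by a path $u = w_0, w_1, \dots, w_n = v$ with $n \geq 2$, the intermediate vertex $w_1$ is loop-filling, and both $u = w_0$ and $w_2$ are disjoint from $w_1$. Applying Lemma~\ref{lemma:disjoint_from_loop_filling} with $L = w_1$ then yields that $u$ and $w_2$ are disjoint, shortcutting the path to length $n - 1$. Iterating this procedure forces $n = 1$, i.e., $u$ and $v$ are adjacent, so $C$ is a clique.

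I do not anticipate a serious obstacle: the main delicate point is simply verifying each of the four defining clauses of high-fillingness at the inductive step, taking care to invoke Corollary~\ref{corollary:disjoint_from_loop_filling} in the correct direction (contrapositively, passing from ray-fillingness of the neighbor $v$ to ray-fillingness of $v'$). Everything else follows routinely from the lemmas already in place.
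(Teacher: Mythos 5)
Your argument is correct, and it is essentially the proof that the paper defers to (Lemma~2.7.8 of~\cite{boundary}): one first checks that high-filling implies loop-filling, propagates high-fillingness to neighbors using the $k$-filling condition and Corollary~\ref{corollary:disjoint_from_loop_filling}, and then collapses paths via Lemma~\ref{lemma:disjoint_from_loop_filling}. No gaps; the four defining clauses are each verified correctly at the inductive step.
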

\begin{proof}
This is Lemma $2.7.8.$ of \cite{boundary}.
\end{proof}

\begin{lemma}\label{lemma:cliques_compact}
As a subset of the boundary of the conical cover, every clique of high-filling rays is compact.
\end{lemma}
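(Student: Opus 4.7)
The plan is to show that the set of endpoints in $\partial\tilde S$ of the rays in a clique $\CC$ is closed; since $\partial\tilde S$ is a circle (hence compact), this gives compactness. So I would take a sequence $(q_n)$ of endpoints of high-filling rays $x_n\in\CC$ converging to some $q\in\partial\tilde S$ and argue that $q$ is itself the endpoint of a high-filling ray in $\CC$.

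First I would invoke Lemma~\ref{lemma:simple_is_compact}: the set $E$ of endpoints of short rays, loops and long rays is compact, so $q\in E$. Thus $q$ is the endpoint of some short ray, loop, or long ray $l$. Because Lemma~\ref{lemma:simple_is_compact} also says that loop endpoints are isolated in $E$, and the $q_n$ are endpoints of long rays accumulating at $q$, the limit $q$ cannot be a loop endpoint. So $l$ is either a short or a long ray.

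Next I would fix any high-filling ray $l^\ast\in\CC$ and apply Lemma~\ref{lemma:cover_converge_limit} to the sequences $(x_n)$ (which cover-converges to $l$ by definition of cover-convergence, since $q_n\to q$) and the constant sequence $y_n=l^\ast$ (which trivially cover-converges to $l^\ast$). Since $x_n\in\CC$, each $x_n$ is disjoint from $l^\ast$, so the lemma yields that $l$ is disjoint from $l^\ast$. Because $l^\ast$ is high-filling, hence ray-filling, it intersects every short ray; therefore $l$ cannot be a short ray, and must be a long ray. Finally, $l$ being disjoint from a high-filling ray $l^\ast$ means $l$ lies in the same connected component of $\RGC(S;p)$ as $l^\ast$, and by Lemma~\ref{lemma:cliques} that component is a clique of high-filling rays, namely $\CC$. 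Hence $l\in\CC$, so $q$ is the endpoint of a ray of $\CC$, proving the set is closed.

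The only delicate step is making sure the limit $l$ cannot be a short ray or loop; this is where the strength of the ``high-filling'' hypothesis (via Lemma~\ref{lemma:simple_is_compact} for loops and via ray-fillingness of $l^\ast$ for short rays) is essential, and is what I would expect to have to check most carefully when writing up the details.
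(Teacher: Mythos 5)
Your argument is correct and is essentially the proof the paper relies on: the paper simply cites Lemma~2.7.9 of \cite{boundary}, whose proof likewise reduces compactness to closedness in the circle and uses exactly the two preparatory facts you invoke (compactness of $E$ with isolation of loop endpoints, and Lemma~\ref{lemma:cover_converge_limit} applied against a fixed ray of the clique) to place the limit point back in the clique. The delicate points you flag (excluding loops and short rays as limits) are handled correctly.
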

\begin{proof}
This is Lemma $2.7.9.$ of \cite{boundary}.
\end{proof}

\subsection{Connected components of $\RGC$}

\begin{theorem}\label{theorem:completed_components}
The connected component of the completed loop graph $\RGC(S;p)$ containing all loops is quasi-isometric to the loop graph $\L(S;p)$.  All other connected components (which are the high-filling rays) are \emph{cliques}, i.e. complete subgraphs.
\end{theorem}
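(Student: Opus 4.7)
The second statement follows quickly from the preceding lemmas, so I would dispatch it first. Let $C$ be a connected component of $\RGC(S;p)$ containing no loop. By Lemma~\ref{lemma:no_k_filling} there are no $k$-filling rays for $k > 2$, and every $k$-filling ray with $k \in \{1,2\}$ lies at $\RGC$-distance exactly $k$ from some loop, hence lies in the loop component. Moreover, any short ray $r$ is adjacent to some loop: locally around $p$, the equator produced by Theorem~\ref{thm:hyperbolic} cuts a neighborhood of $p$ into finitely many sectors, so one can follow $r$ slightly, cross an adjacent equator arc, and return to $p$ through a different sector, producing a loop disjoint from $r$. Hence every vertex of $C$ is a long ray that is not $k$-filling for any finite $k$, i.e., is high-filling, and Lemma~\ref{lemma:cliques} immediately gives that $C$ is a clique.

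For the quasi-isometric embedding, the inclusion $\iota : \L(S;p) \hookrightarrow \RGC(S;p)$ is tautologically $1$-Lipschitz. I would construct an explicit coarse inverse $\phi$ defined on the loop component: send each loop to itself, each short ray to some disjoint loop (produced as above), and each $k$-filling ray (with $k \in \{1,2\}$) to a loop realizing its distance in $\RGC$. By construction $d_{\RGC}(v, \iota(\phi(v))) \leq 2$ for every vertex $v$ in the loop component, which both yields coarse surjectivity and reduces the distance comparison to the claim that $\phi$ itself is coarsely Lipschitz.

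The main obstacle is establishing this last property, which reduces to showing that any two loops $\ell, \ell'$ at $\RGC$-distance at most $2$ have bounded distance in $\L(S;p)$. The nontrivial case is when $\ell$ and $\ell'$ share a common neighbor $v$ that is a short ray or a $k$-filling ray. I would handle it by a surgery argument: given a loop $\ell$ disjoint from $v$, cut $\ell$ along a short subarc near $p$ and reroute through an adjacent sector to produce a loop $\ell''$ simultaneously disjoint from $\ell$, $\ell'$, and $v$, yielding $d_{\L}(\ell,\ell') \leq 2$. This is essentially the surgery carried out in~\cite{boundary} for the sphere minus a Cantor set; the generalization requires only that the local picture near $p$ (finitely many complementary sectors of the equator) and the cover-convergence framework of Section~\ref{section:cover_convergence} be available, both of which hold here by Theorem~\ref{thm:hyperbolic}. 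Iterating along an $\RGC$-geodesic between two loops and replacing each non-loop vertex by the chosen nearby loop then produces an $\L$-path whose length is at most a universal multiple of the $\RGC$-distance, completing the quasi-isometry.
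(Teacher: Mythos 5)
Your first half is fine: showing that every short ray and every $k$-filling ray lies in the loop component and then invoking Lemma~\ref{lemma:no_k_filling} and Lemma~\ref{lemma:cliques} is essentially how the clique statement is obtained (the paper simply cites Theorem~2.8.1 of \cite{boundary} for the whole theorem, having supplied the one ingredient whose proof needed to be redone in this generality).

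The gap is in the quasi-isometry half, and it is precisely the step you wave at. Your reduction is to the configuration of two loops at $\RGC$-distance at most $2$ with a common non-loop neighbor, but that is not the configuration your own construction produces. When you replace each non-loop vertex $v_i$ of an $\RGC$-geodesic by a loop $\phi(v_i)$ with $d_{\RGC}(v_i,\phi(v_i))\le 2$, consecutive images $\phi(v_i),\phi(v_{i+1})$ are joined by an $\RGC$-path of length up to $5$ \emph{none of whose interior vertices need be loops}, so you cannot break it into distance-$2$ pieces between loops and iterate. The irreducible hard case is a path $(a,\lambda,\mu,b)$ with $a,b$ loops and $\lambda,\mu$ two \emph{consecutive} long rays; this is exactly what the paper isolates as Lemma~\ref{lemma:short_loops_enough} (loops at $\RGC$-distance $3$ are at distance $3$ in the short-ray-and-loop graph), the only part of the argument it proves afresh. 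Moreover the surgery there is more delicate than "reroute through an adjacent sector near $p$": one must analyze how $\lambda$ meets the segments of $b-a$, and in the case where $\lambda$ crosses each such segment at most once it gets trapped in a complementary component $C$ of $S-(a\cup b)$ together with $\mu$, and one needs genus or punctures inside $C$ to close up $\lambda$ and $\mu$ into loops $\hat\lambda,\hat\mu$ \emph{simultaneously} with the right disjointness pattern. Your local surgery also does not address why the rerouted loop is essential, simple, and disjoint from all of $\ell$, $\ell'$, and $v$ at once. So the skeleton (inclusion plus a coarse inverse built from nearby loops) matches the paper, but the key lemma that makes the Lipschitz estimate work is missing, and the case you call "the nontrivial case" is actually the easier of the two.
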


We start with a technical lemma, whose statement is the same than Lemma~$2.8.2.$ of \cite{boundary}. However, the proof in \cite{boundary} was specific to the plane minus a Cantor set. We give here a more general proof.

\begin{lemma}\label{lemma:short_loops_enough}
Let $a$ and $b$ be two loops at distance $3$ in the completed ray graph. Then they are at distance $3$ in the short-ray-and-loop graph.
\end{lemma}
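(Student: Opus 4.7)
The plan is to take the given length-three path $a, x_1, x_2, b$ in $\RGC$ and, whenever a middle vertex is a long ray, replace it with a nearby loop or short ray, producing a length-three path in $R_sL$. Since $R_sL$ is a full subgraph of $\RGC$, we automatically have $d_{R_sL}(a,b) \geq 3$, so only the upper bound needs work. If $x_1$ and $x_2$ both already lie in $R_sL$ there is nothing to do, so assume at least one, say $x_1$, is a long ray (the case $x_2$ long being symmetric; both long handled by simultaneous replacement). Since $x_1$ is disjoint from the loop $a$, it is $1$-filling, hence not high-filling, and its lift $\tilde x_1$ ends at some $q_1 \in \partial \tilde S$.

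First I would establish that $q_1$ is an accumulation point of endpoints of simple loops or short rays on $\partial \tilde S$. By Lemma~\ref{lemma:cover_converge_iff_k_begin} this amounts to producing, for every $k$, a simple loop or short ray $u_1^{(k)}$ that $k$-begins like $x_1$ in the sense of Definition~\ref{def:k_begin}. The construction is to follow $x_1$ through its first $k$ equator crossings and then close off inside the polygon region in which we land, routing the closing arc either back to the base puncture $p$ (yielding a loop) or to some other isolated puncture on the polygon boundary (yielding a short ray). Simple connectedness of the polygon complements (Lemma~\ref{lem:proper_arcs}) supplies the room, and simplicity on $S$ is then arranged by isotopy.

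Next I would show that for $k$ sufficiently large, $u_1^{(k)}$ is still disjoint from $a$ and from $x_2$ (when $x_2 \in R_sL$). A loop or short ray crosses only finitely many equator arcs, so once $k$ exceeds these counts, $u_1^{(k)}$ agrees combinatorially with $x_1$ across every equator arc that $a$ or $x_2$ can encounter; since $x_1$ is disjoint from both, no crossing is forced in the initial segment of $u_1^{(k)}$, and its tail closes off in a polygon region disjoint from these curves. When $x_2$ is also a long ray, I would apply the same procedure to it to obtain $u_2^{(k)}$ cover-converging to $x_2$, and argue that $u_1^{(k)}$ and $u_2^{(k)}$ are eventually disjoint: this is a converse to Lemma~\ref{lemma:cover_converge_limit} established via $k$-beginnings, using that disjointness of $x_1$ and $x_2$ is reflected in a compatible combinatorial structure of their equator-crossing sequences, and agreement for long enough forces disjointness of the approximants.

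The main obstacle is the combinatorial control needed in the second step: ensuring the closing tail of $u_1^{(k)}$ introduces no new intersection with $a$ or $x_2$ beyond what $x_1$ itself avoids. This relies on the local finiteness of the equator together with the observation that, for $k$ large, after the $k$-th crossing we are deep enough into the polygon structure that the finitely many equator crossings of $a$ (and $x_2$, if in $R_sL$) have all been passed, so the closing arc can be routed without returning to their vicinity. Once this step is in place, the rest of the argument is a direct assembly.
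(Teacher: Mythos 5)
Your first step (producing, for every $k$, a simple loop or short ray that $k$-begins like $x_1$) is plausible, but the second step contains the real content of the lemma and your argument for it does not work. Knowing that $u_1^{(k)}$ agrees with $x_1$ across the first $k$ equator crossings gives you no control over its intersections with $a$ or $x_2$: the closing tail of $u_1^{(k)}$ is an essentially arbitrary arc, and the claim that ``for $k$ large we are deep enough into the polygon structure that the crossings of $a$ have all been passed'' conflates the conical cover with the surface. In $S$ there are only two complementary polygon regions, the loop $a$ meets both of them, and a long ray is precisely a geodesic that is \emph{not} proper -- it returns to every neighborhood of $a$ infinitely often, and its $k$-th equator crossing may well be the same arc of $\W$ crossed by $a$. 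For the same reason your proposed ``converse to Lemma~\ref{lemma:cover_converge_limit}'' is false as stated: two disjoint long rays can easily be approximated by curves that intersect each other, since only their initial segments are controlled. So the key disjointness assertions ($u_1^{(k)}$ disjoint from $a$ and from $x_2$, and $u_1^{(k)}$ disjoint from $u_2^{(k)}$) are unproved, and no choice of $k$ alone will force them.

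The paper's proof circumvents exactly this difficulty by never closing up with an uncontrolled arc. Writing the path as $(a,\lambda,\mu,b)$, it exploits the fact that $\lambda$ must intersect $b$ and performs surgery on $\lambda$ along a subarc of $b-a$: the resulting loop $\hat\lambda$ is built from two initial pieces of $\lambda$ (disjoint from $a$ and $\mu$ because $\lambda$ is) and a subarc of $b$ (disjoint from $a$ away from the endpoints, and from $\mu$ because $\mu$ is disjoint from $b$), so disjointness is automatic. In the remaining case, where $\lambda$ crosses each segment of $b-a$ at most once, the tail of $\lambda$ is trapped in a single complementary component $C$ of $a\cup b$, and one finds an essential loop inside $C$; here disjointness from $a$ and $b$ is again built in because the loop lives in $C$. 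If you want to salvage your approach, you need to replace ``close up deep in the polygon'' with ``close up using a subarc of $b$ (resp.\ $a$), or inside a complementary component of $a\cup b$ containing the tail of the long ray'' -- at which point you have essentially reconstructed the paper's argument.
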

\begin{proof}
Let $(a,\lambda,\mu,b)$ be a geodesic path in the completed ray graph.  We will show how to replace $\lambda$ and $\mu$ with loops, which will prove the lemma.  This proof proceeds by chasing down various cases.  Figure~\ref{fig:short_loops_enough} provides an illustration.

\begin{figure}[htb]
\labellist
\pinlabel $a$ at 10 100
\pinlabel $b$ at 15 160
\pinlabel $\lambda$ at 45 125
\pinlabel $\mu$ at 120 162

\pinlabel $s_b$ at 170 150
\pinlabel $\lambda$ at 185 162
\pinlabel $\hat{\lambda}$ at 205 162

\pinlabel $C$ at 310 60
\pinlabel $\lambda$ at 300 162
\pinlabel $\hat{\lambda}$ at 317 162
\pinlabel $\mu$ at 358 103
\pinlabel $\hat{\mu}$ at 355 88
\endlabellist
\includegraphics[scale=0.98]{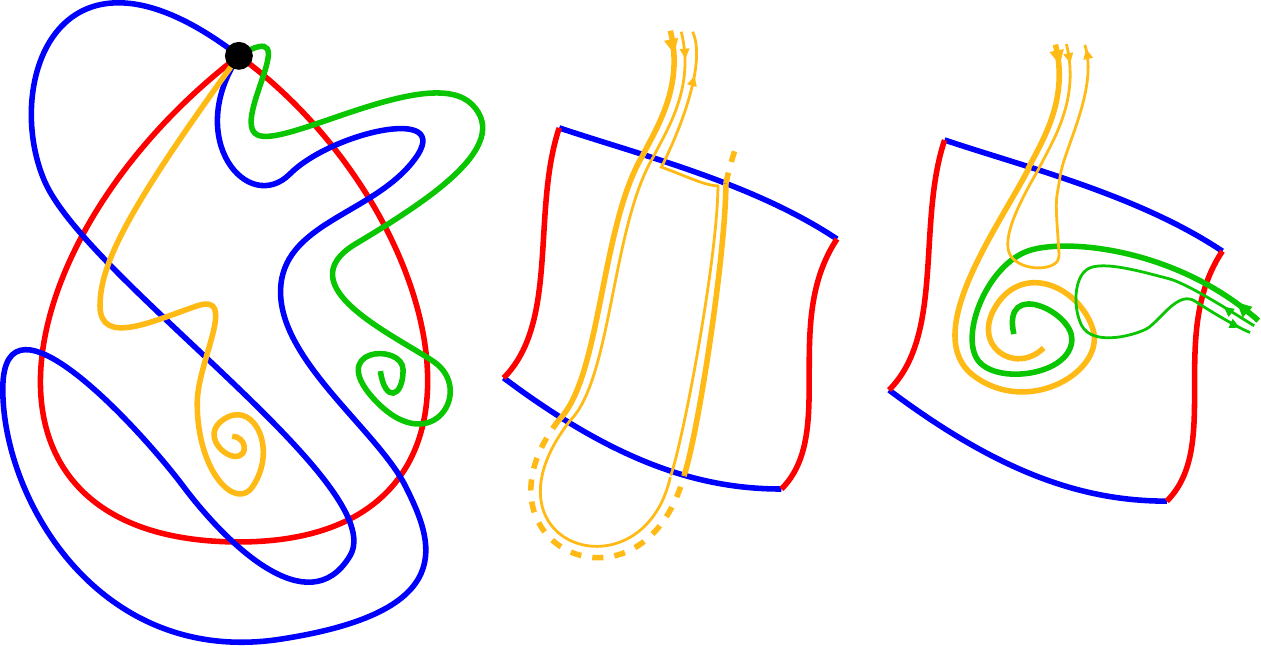}
\caption{An illustration of Figure~\ref{lemma:short_loops_enough}.  If $\lambda$
intersects some component of $b-a$ more than once, we can replace it with a loop.
Otherwise, it might enter some component of the complement of $a \cup b$ and not exit.
In this case, in the most general situation, we replace $\lambda$ and $\mu$ with
loops.}
\label{fig:short_loops_enough}
\end{figure}

Since $a$ and $b$ are loops, they intersect finitely many times.  Therefore $b-a$ is
composed of finitely many segments.  The long ray $\lambda$ may intersect these segments,
since $\lambda$ is not disjoint from $b$.  There are two cases depending on whether
$\lambda$ intersects some segment more than once or intersects all segments once.

Suppose that $\lambda$ intersects some segment of $b-a$ more than once.  Call this
segment $s_b$.  Let $x_1$ and $x_2$ be the first two points of intersection between
$\lambda$ and $s_b$ as we travel from $\infty$ along $\lambda$.  Because $\mu$ is disjoint from $b$, it is in particular disjoint from the subsegment $(x_2x_1)_{s_b}$ of $s_b$ between $x_2$ and $x_1$. The union $(\infty x_2)_\lambda \cup (x_2 x_1)_{s_b} \cup (x_1 \infty)_\lambda$ is an essential loop $\hat \lambda$ disjoint from $a$ and $\mu$.

By applying the same procedure to $\mu$, if $\mu$ intersects any segment of $a-b$ more than
once, we can replace it with an essential loop $\hat{\mu}$ disjoint from $\lambda$ and $b$.  Note these replacements can be made simultaneously.  See Figure~\ref{fig:short_loops_enough},
center.

We must now address the cases in which $\lambda$ and/or $\mu$ intersect all segments
of $b-a$ (respectively, $a-b$) at most once.  Consider $\lambda$.  There must be some segment of 
$b-a$ which it crosses last, say at a point $x_0$.  Thus, the entire ray from $x_0$
along $\lambda$ is disjoint from $a,\mu$, and $b$, and in particular, it is trapped
in a component $C$ of $S - (b\cup a)$.  In the most general case, $\mu$ is also trapped in
this same component.  Because $\lambda$ and $\mu$ are geodesic long rays,
there must be either genus or at least two punctures in $C$ accessible for both $\lambda$
and $\mu$.  We can therefore resolve both of then into loops $\hat{\lambda}$,
$\hat{\mu}$ with the appropriate
disjointness properties.  Note this case applies even when $\lambda$ or $\mu$ has
already been replaced with a loop.  See Figure~\ref{fig:short_loops_enough}, right.
\end{proof}

\begin{proof}[Proof of Theorem~\ref{theorem:completed_components}]
This is Theorem $2.8.1.$ of \cite{boundary}.
\end{proof}

\begin{remark}
After Theorem~\ref{theorem:completed_components}, we will conflate the definitions of the loop graph and the short-ray-and-loop graph, using whichever is most convenient.  We'll refer to all of these graphs, and also to the quasi-isometric connected component of $\RGC$, by $\RG$.  We'll also call this component the \emph{main component}, as distinguished from the \emph{cliques}.  We show in Section~\ref{section:bijection} that the
cliques are in bijection with the Gromov boundary of $\RG$.
\end{remark}

\section{Gromov-boundaries of loop graphs}
\label{section:bijection}

\subsection{Infinite unicorn paths}\label{section:unicorn_paths}
This section is the adaptation of Section $3$ of \cite{boundary}. 

\subsubsection{Finite unicorn paths}
\label{section:finite_unicorn_paths}
In \cite{boundary}, we give two definitions of unicorn paths in loop graphs (the first is adapted from \cite{Hensel-Przytycki-Webb}), and we prove that these two definitions are equivalent. We use here the exact same definitions, without recalling them (Definitions $3.2.1$ and $3.2.2$ of \cite{boundary}).

Lemmas $3.2.3$, $3.2.4$, $3.2.5$, $3.2.6$, $3.2.7$ of \cite{boundary} remain true with the exact same proofs.  We only have to be careful with the proof of Lemma $3.2.7$, which uses the equator, but it is still true with our new definition of the equator.

\subsubsection{Infinite unicorn paths}
We define infinite unicorn paths as we did in \cite{boundary}, Definition $3.3.1$ (which was adapted from \cite{Pho-On}).
Lemmas $3.3.3$, $3.3.4$ of \cite{boundary} remain true, with the same proofs.

\subsubsection{Infinite unicorn paths and cover-convergence}

Lemma $3.4.1$ of \cite{boundary} remains true, with the same proof.

\subsubsection{Infinite unicorn paths and filling rays}

Lemmas $3.5.1$ and $3.5.2$ of \cite{boundary} remain true, with the same proofs. The proof of Lemma $3.5.3.$ of \cite{boundary} used the old defintion of ray-filling and the Cantor set. Hence the proof in the general case is slightly different. We give here a new proof for the general case.

\begin{lemma}\label{lemma:unicorn_path_1_ray_filling} [Lemma $3.5.3.$ of \cite{boundary}]
Let $l$ be a $2$-filling ray. For every oriented loop $a$, $P(a,l)$ is bounded. Moreover, there exists $a$ such that $P(a,l)$ is included in the $2$-neighborhood of $a$.
\end{lemma}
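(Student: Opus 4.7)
The strategy exploits the definition of $2$-filling unpacked in this paper: since $l$ is $2$-filling, there exist a loop $l_0$ and a long ray $l_1$ with $l_0$ disjoint from $l_1$ and $l_1$ disjoint from $l$. This $l_1$ will serve as a universal \emph{witness} for disjointness.

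For the second assertion (existence of a good $a$), the plan is to take $a = l_0$. Every unicorn arc $u$ in $P(l_0, l)$ has the form $u = (l_0)_{p,x} \cup l_{x,\partial}$ for some intersection point $x \in l_0 \cap l$, where $(l_0)_{p,x}$ is a sub-arc of $l_0$ and $l_{x,\partial}$ is a tail of $l$. Since $l_1$ is disjoint from $l_0$ and from $l$, both sub-arcs composing $u$ can be isotoped to be disjoint from $l_1$, and hence so can $u$. Therefore $d(u, l_1) \le 1$; combined with $d(l_0, l_1) \le 1$, this gives $d(l_0, u) \le 2$, so $P(l_0, l) \subseteq N_2(l_0)$.

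For the first assertion, given an arbitrary oriented loop $a$, I would invoke the slimness of unicorn triangles, a property established for finite unicorn paths in the boundary paper (Lemmas 3.2.3--3.2.7) and extended to the infinite case (Section 3.3). Applied to the unicorn triangle with vertices $a, l_0, l$, this property places $P(a,l)$ in a uniformly bounded neighborhood of $P(a, l_0) \cup P(l_0, l)$. Now $P(a, l_0)$ is a finite unicorn path between two loops, hence has bounded diameter, while $P(l_0, l)$ is bounded by the previous paragraph; therefore $P(a,l)$ is bounded as well.

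The main obstacle I expect is checking that the slimness of unicorn triangles extends cleanly to the case where one vertex is a long ray, since the standard Hensel--Przytycki--Webb arguments are written for finite arcs and loops. If this extension is not formally available from the earlier sections, a backup strategy is to argue directly: every unicorn arc $u = a_{p,x} \cup l_{x,\partial}$ in $P(a, l)$ whose switching point $x$ lies far enough along $l$ has $l_{x,\partial}$ automatically disjoint from $l_1$, so only the $a$-part can fail to be disjoint from $l_1$; this failure is controlled by the finite intersection $a \cap l_1$, and each problematic unicorn arc can be routed via one of the finitely many unicorn arcs in $P(a, l_0)$, yielding a uniform bound on $d(u, l_0)$ and hence on $d(u, a)$.
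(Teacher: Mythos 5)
Your overall architecture matches the paper's: prove the ``moreover'' statement for the specific loop $a=l_0$ coming from the definition of $2$-filling, then deduce boundedness of $P(b,l)$ for arbitrary $b$ from the fact that $P(b,l)$ lies in the $d(a,b)$-neighborhood of $P(a,l)$. For the first assertion your worry about extending slimness to infinite unicorn paths is moot: the needed statement is exactly Lemma~3.3.3 of the boundary paper, which Section~\ref{section:unicorn_paths} already records as holding verbatim in this setting, and this is precisely what the paper invokes. (Minor point: the unicorn arcs $x_k\in P(a,l)$ here are \emph{loops} $(pp_k)_l\cup(pp_k)_a$, not ``initial arc of $a$ plus an infinite tail of $l$''; this does not affect your disjointness observation, since either way $x_k$ is contained in $a\cup l$ and hence is literally disjoint from the geodesic $l_1$.)

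The gap is in the second assertion. Your path $l_0 - l_1 - x_k$ passes through the \emph{long ray} $l_1$, so it only shows $d(l_0,x_k)\le 2$ in the completed ray graph $\RGC$. The $2$-neighborhood in the lemma is taken in the loop graph (equivalently the short-ray-and-loop graph): the whole point, and the way the lemma is used later, is to bound $P(a,l)$ in the graph whose Gromov boundary is under study, and a path through a long-ray vertex does not live there. Theorem~\ref{theorem:completed_components} only gives a quasi-isometry between the main component of $\RGC$ and the loop graph, so your argument yields boundedness of $P(l_0,l)$ but not the constant $2$, and more importantly it skips the step that carries the real content. The paper converts your observation into a loop-graph estimate as follows: since $l_1$ is disjoint from both $a$ and $x_k$, it lies in a single complementary component $C$ of the compact set $(pp_k)_l\cup a$; because $C$ contains a complete geodesic long ray it is not simply connected, and it has $p$ in its boundary, so one can draw an essential loop $y_k\subset C$ based at $p$ disjoint from both $a$ and $x_k$, giving $d(a,x_k)\le 2$ in the loop graph. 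That construction of $y_k$ is the missing step in your proposal.
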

\begin{proof}
By definition of $2$-filling ray, there exists a ray $l'$ disjoint from $l$ with a loop $a$ disjoint from $l'$. Consider any $x_k=(p p_k)_l \cup (p p_k)_a$ in $P(a,l)$. We will find a loop $y_k$ disjoint from $a$ and $x_k$: hence $x_k$ is at distance at most two from $a$.

Denote by $l_k$ the sub-segment $(p p_k)_l$ of $l$ between $p$ and $p_k$. Consider $l_k \cup a$: this is a compact set of $S$, whose complement has finitely many connected components. Each component of the complement of $l_k \cup a$ is arcwise connected. As $l'$ is disjoint from both $l$ and $a$, it is included in a connected component of the complement of $l_k \cup a$. Denote this connected component by $C$. Note that $C$:
\begin{itemize}
\item is not simply connected, hence contains some points of $\P$ or some genus (because $l'$ is a geodesic included in $C$);
\item contains $p$ in its boundary (because it contains the ray $l'$).
\end{itemize}
It follows that we can draw a loop $y_k$ included in $C$: this loop is disjoint from both $a$ and $x_k$. Hence $P(a,l)$ is in the $2$-neighborhood of $a$. 

According to the adaptation of Lemma $3.3.3$ of \cite{boundary}, for every oriented loop $b$, $P(b,l)$ is included in the $d(a,b)$-neighborhood of $P(a,l)$, hence $P(b,l)$ is bounded.
\end{proof}

Lemmas $3.5.4.$, $3.5.5.$ and $3.5.6.$ of \cite{boundary} remain true, with the same proofs.

\subsection{Gromov-boundary and unicorn paths}

Everything in Section~4 of~\cite{boundary} holds. 
In particular, Lemmas $4.3.1$, $4.4.1$ and $4.5.1$
remain true in the general case, with the same proofs.

\subsection{Bijection}

\begin{theorem}\label{theorem:boundary_bijection}
Given a point $x$ on the Gromov boundary $\partial \RG$, there is a nonempty set of long rays $R$ such that the point $x$ is the equivalence class of $P(a,l)$ for all $l \in R$ and any loop $a$.  The set $R$ is a clique (and an entire connected component) of high-filling rays in $\RGC$.  Conversely, given a connected component $R$ of high-filling rays in $\RGC$ (which is necessarily a clique), there is a single boundary point $x \in \partial \RG$ such that $x$ is the equivalence class of $P(a,l)$ for all $l \in R$ and any loop $a$, and $R$ is exactly the set of rays with this property.

Hence the set of high-filling cliques is in bijection with the boundary of the loop graph $\partial \RG$.
\end{theorem}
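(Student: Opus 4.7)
The plan is to establish both directions of the bijection by leveraging the theory of infinite unicorn paths developed in Section \ref{section:unicorn_paths} together with the properties of high-filling rays, essentially following the scheme from \cite{boundary} while substituting a loop in place of the short ray $l_0$ used there.

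For the direction from cliques to boundary points, fix a connected component $R$ of high-filling rays in $\RGC$; by Lemma \ref{lemma:cliques}, $R$ is a clique. Pick $l \in R$ and any loop $a$. The adapted Lemmas 3.5.4 and 3.5.5 of \cite{boundary} show that $P(a,l)$ is a well-defined infinite unicorn path which is a quasi-geodesic ray in $\RG$, hence defines a point $x \in \partial \RG$. The adapted Lemma 3.5.6 shows that this point is independent of the choice of loop $a$. For another $l' \in R$, since $l$ and $l'$ are disjoint high-filling rays, one checks that $P(a,l)$ and $P(a,l')$ fellow-travel at bounded distance, so they converge to the same boundary point $x$.

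For the direction from boundary points to cliques, start with $x \in \partial \RG$ and choose a sequence of loops $(a_n)$ with $[a_n] \to x$ along a quasi-geodesic. By Lemma \ref{lemma:simple_is_compact}, the endpoints of the distinguished lifts $\tilde{a}_n$ on $\partial \tilde{S}$ admit a convergent subsequence, with limit a point $q \in \partial \tilde{S}$; the image in $S$ of the geodesic from $\tilde{p}$ to $q$ is a simple geodesic $l$ from $p$. Using Lemma \ref{lemma:cover_converge_iff_k_begin} together with the characterization of unicorn paths via $k$-beginnings from Section \ref{section:unicorn_paths}, one shows $P(a,l)$ represents $x$ for any loop $a$. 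Next, $l$ cannot be a short ray or loop (since in that case $P(a,l)$ would be finite) and cannot be $k$-filling for $k \in \{1,2\}$ by Lemma \ref{lemma:unicorn_path_1_ray_filling} and its analogue for $k=1$, while Lemma \ref{lemma:no_k_filling} rules out $k > 2$. Hence $l$ is high-filling. Collecting all such limit rays yields the set $R$; Lemma \ref{lemma:disjoint_from_loop_filling} forces the rays in $R$ to be pairwise disjoint, so $R$ sits inside a single connected component of $\RGC$, which by Lemma \ref{lemma:cliques} is a clique of high-filling rays.

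The main obstacle will be the extraction step in the forward direction: carefully arranging for any $x$-converging sequence of loops to yield, via cover-convergence, a \emph{long} ray whose infinite unicorn path is unbounded and represents $x$, rather than a short ray or a loop (which would represent no boundary point) or a low-filling long ray (whose $P(a,l)$ would be bounded by Lemma \ref{lemma:unicorn_path_1_ray_filling}). The quantitative input is the quasi-geodesicity of infinite unicorn paths (adapted Section 4 of \cite{boundary}), which lets us compare the Gromov-boundary convergence of the $a_n$ with the cover-convergence of their lifts and force the limit to be high-filling. Once this is secured, well-definedness of the two constructions as mutual inverses is straightforward from the fellow-traveling argument in the first paragraph.
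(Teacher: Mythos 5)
Your proposal reconstructs the argument of Theorem 5.1.1 of \cite{boundary}, which is exactly what the paper does: its entire proof is the one-line citation that the proof from \cite{boundary} remains valid, all the supporting lemmas having been checked in the preceding sections, so your sketch and the paper take the same route. One small caveat: pairwise disjointness of the rays in $R$ does not follow directly from Lemma~\ref{lemma:disjoint_from_loop_filling} (which requires a third loop-filling ray from which both are disjoint); it comes instead from the $k$-beginning/neighborhood machinery (Lemma~\ref{lemma:control_of_neighborhoods_of_geodesics_general_m} and its relatives), which is the tool used in \cite{boundary} and which you already invoke for the other steps.
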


\begin{proof} This is Theorem $5.1.1.$ of \cite{boundary}. Its proof is still valid here.
\end{proof}

Following Theorem~\ref{theorem:boundary_bijection}, we can define a bijection between cliques in $\RGC$ and the boundary of $\RG$.  Let $\HC$ be the set of high-filling rays, and let $\E$ be the set of cliques of high-filling rays in the graph $\RGC$.  If $l$ is a high-filling ray, we let $[l]$ denote the clique in $\E$ containing $l$.  By Theorem~\ref{theorem:boundary_bijection}, 
we can define the map $F: \E \to \partial\RG$ by $F([l]) = [P(a,l)]$, and $F$ is a bijection.  That is, we take a clique in $\E$ to the equivalence class of $P(a,l)$ on the boundary, for any $l$ in the clique and any loop $a$.

There is a lifted map $\tilde{F} = F\circ q:\HC \to \partial\RG$, 
where $q$ is the map $q:\HC \to \E$ taking a long ray to its clique.  See Figure~20 in~\cite{boundary}.

\subsection{Topologies}
We define topologies on both the set of high-filling rays and the Gromov
boundary of the loop graph as we did in \cite{boundary} (Sections $5.2.1.$ and $5.2.2.$).  As in \cite{boundary}, the topology on the high-filling rays induces a topology on the set of cliques in $\RGC$, and it the space $\RGC$ endowed with this topology which we will show is homeomorphic to the Gromov boundary. Lemma $5.2.2.$ of \cite{boundary} is true in the general case, with the same proof.

\subsection{High-filling rays and neighborhoods}

Lemma $5.3.1$ of \cite{boundary} is true in the general case, with the same proof.
Because we refer to it later, we restate it here for convenience.
\begin{lemma}[Lemma~5.3.1 of \cite{boundary}]
\label{lemma:control_of_neighborhoods_of_geodesics_general_m}
Let $l$ be a high-filling ray. Let $m\in \N$. For every $k \in \N$, there
exists $N \in \N$ such that every loop at distance at most $m$ from a loop
which $N$-begins like~$l$ must $k$-begin like one of the rays in the clique of $l$. 
\end{lemma}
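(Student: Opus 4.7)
The plan is to argue by contradiction, combining a diagonal compactness extraction on $\partial\tilde{S}$ with the cover-convergence criterion of Lemma~\ref{lemma:cover_converge_iff_k_begin} and the disjointness-from-limits result of Lemma~\ref{lemma:cover_converge_limit}. Suppose the statement fails for some fixed $m$ and $k$. Then one extracts loops $a_n, b_n$ with $a_n$ $n$-beginning like $l$, with $d_{\RG}(a_n, b_n) \leq m$, and with no $b_n$ $k$-beginning like any ray in the clique $[l]$. By Lemma~\ref{lemma:cover_converge_iff_k_begin} applied in reverse, $(a_n)$ cover-converges to $l$.

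Next, for each $n$ I would fix a geodesic $a_n = c_n^0, c_n^1, \ldots, c_n^{m_n} = b_n$ in $\RG$ with $m_n \leq m$, and after passing to a subsequence assume $m_n$ is a constant $m'$. Since $\partial\tilde{S}$ is compact, by successive subsequence extractions one can also arrange that for each $0 \leq i \leq m'$ the distinguished endpoints of $c_n^i$ on $\partial\tilde{S}$ converge to points $q_i \in \partial\tilde{S}$. By Lemma~\ref{lemma:simple_is_compact}, each $q_i$ is the endpoint of a simple geodesic $c^i$ (a short ray, loop, or long ray), and $c_n^i$ cover-converges to $c^i$; in particular $c^0 = l$.

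The heart of the argument is to upgrade this to a statement about the clique. Since $c_n^i$ and $c_n^{i+1}$ are disjoint in $\RGC$ for every $n$, Lemma~\ref{lemma:cover_converge_limit} forces $c^i$ and $c^{i+1}$ to be disjoint as well. Thus $c^0, \ldots, c^{m'}$ is a path in $\RGC$, so every $c^i$ lies in the connected component of $l = c^0$. Because $l$ is high-filling, Lemma~\ref{lemma:cliques} says that this component is exactly the clique $[l]$ and consists entirely of high-filling (long) rays; hence each $c^i$, and in particular $c^{m'}$, belongs to $[l]$.

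To conclude, I would observe that $b_n = c_n^{m'}$ cover-converges to the long ray $c^{m'} \in [l]$, so the forward direction of Lemma~\ref{lemma:cover_converge_iff_k_begin} yields some $N_0$ with $b_n$ $k$-beginning like $c^{m'}$ for every $n \geq N_0$, contradicting our assumption on $(b_n)$. The main step I expect to require care is the compactness extraction: producing simultaneous cover-convergent subsequences at every intermediate vertex $c_n^i$ and verifying that the limits are honest short rays, loops, or long rays rather than boundary points of a different character, for which Lemma~\ref{lemma:simple_is_compact} is essential. Once these limits exist, the rest of the argument is a clean propagation along the path via Lemmas~\ref{lemma:cover_converge_limit} and~\ref{lemma:cliques}.
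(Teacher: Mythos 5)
Your proof is correct and takes essentially the same route as the paper's (which defers to Lemma~5.3.1 of \cite{boundary}): argue by contradiction, use compactness of the set $E\subseteq\partial\tilde S$ to extract cover-convergent limits of all vertices along the bounded-length geodesics, propagate disjointness to the limits via Lemma~\ref{lemma:cover_converge_limit}, and invoke Lemma~\ref{lemma:cliques} to place the terminal limit in the clique of $l$ before applying Lemma~\ref{lemma:cover_converge_iff_k_begin}. The only cosmetic point is that consecutive limits $c^i$, $c^{i+1}$ may coincide rather than span an edge, but they still lie in the same component of $\RGC$, so the argument goes through.
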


We re-write the proof of the following proposition (the idea is similar than the proof of \cite{boundary} -- the modification comes from the fact that we don't necessary have a loop which intersects the equator only once in the general case).

\begin{proposition}[Proposition $5.3.2.$ of \cite{boundary}]\label{prop:neighborhoods_on_boundary}
Let $P \in \partial \RG$. For every $k \in \N$, there exists $N>0$ such that for every $Q \in U(P,N)$, for every $\lambda' \in \tilde{F}^{-1}(Q)$, there exists $\lambda \in \tilde{F}^{-1}(P)$ which $k$-begins like $\lambda'$.
\end{proposition}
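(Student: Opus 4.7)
The plan is to combine Lemma~\ref{lemma:control_of_neighborhoods_of_geodesics_general_m} with two facts: (i) unicorn paths are uniform quasi-geodesics in the hyperbolic graph $\RG$ representing boundary points, and (ii) their vertices cover-converge to the defining ray, which, restated via the adaptations of Lemma~$2.4.5$ and Lemma~$3.4.1$ of~\cite{boundary}, means that vertices far enough out on $P(a,\lambda')$ $N$-begin like $\lambda'$ for any prescribed $N$.

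First I would fix a base loop $a$ and a representative $l\in\tilde F^{-1}(P)$, so that $P=[P(a,l)]$. Given $k$, apply Lemma~\ref{lemma:control_of_neighborhoods_of_geodesics_general_m} with a constant $m$ (to be fixed in the next step by Morse stability of quasi-geodesics in $\RG$) to obtain an integer $N_0$, which I enlarge to ensure $N_0\ge k$, such that every loop within distance at most $m$ of a loop that $N_0$-begins like $l$ must $k$-begin like some ray in the clique $\tilde F^{-1}(P)$.

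Next, cover-convergence of unicorn paths yields an index $M$ such that every vertex past position $M$ on $P(a,l)$ $N_0$-begins like $l$ and, uniformly in $\lambda'$, every vertex past position $M$ on $P(a,\lambda')$ $N_0$-begins like $\lambda'$. I would then choose $N$ large enough that whenever $Q\in U(P,N)$, the two unicorn paths $P(a,l)$ and $P(a,\lambda')$---both uniform $C$-quasi-geodesics in $\RG$ starting at $a$ and converging to $P$ and $Q$ on $\partial\RG$---have their vertices at position $M$ within distance $m$ in $\RG$. This is possible by Morse stability of quasi-geodesics with nearby ideal endpoints in a Gromov-hyperbolic space.

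Given such $Q$ and any $\lambda'\in\tilde F^{-1}(Q)$, let $v$ and $u$ be the respective vertices at position $M$ on $P(a,\lambda')$ and $P(a,l)$. Then $u$ $N_0$-begins like $l$, $v$ $N_0$-begins like $\lambda'$, and the $\RG$-distance between $u$ and $v$ is at most $m$; hence by Lemma~\ref{lemma:control_of_neighborhoods_of_geodesics_general_m} the loop $v$ $k$-begins like some ray $\lambda\in\tilde F^{-1}(P)$. Since $v$ also $N_0$-begins (hence $k$-begins) like $\lambda'$, the rays $\lambda$ and $\lambda'$ $k$-begin alike, which is the desired conclusion. The main difficulty is aligning the two distinct notions of ``closeness'' in play: the geometric one built into $U(P,N)$ via Gromov products, and the combinatorial one encoded by the $N$-beginning relation; bridging them uses precisely that unicorn paths are uniform quasi-geodesics and that cover-convergence of their vertices translates Gromov-closeness into loops sharing long initial sequences of equator crossings.
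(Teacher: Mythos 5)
Your proposal follows essentially the same route as the paper's proof: apply Lemma~\ref{lemma:control_of_neighborhoods_of_geodesics_general_m} with a hyperbolicity/Morse constant as $m$, use the combinatorial fact that vertices far enough out on a unicorn path $P(a,\mu)$ must $N_0$-begin like $\mu$ (uniformly in $\mu$, via the number of equator intersections of $a$), and use the large Gromov product defining $U(P,N)$ to force the two unicorn quasi-geodesics to fellow-travel long enough. The one minor imprecision is the claim that the vertices at the \emph{same} index $M$ on the two paths are within distance $m$ — fellow-traveling only yields that $y_M$ is close to \emph{some} vertex $x_q$ of $P(a,l)$ — but since taking $N$ large enough lets one arrange $q\ge M$ (the paper likewise only finds ``some point $y_p$'' close to a fixed far-out $x_{sN'+1}$), the argument goes through unchanged.
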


\begin{proof} Consider $P \in \partial \RG$ and choose $k \in \N$.
Let $\lambda \in \tilde{F}^{-1}(P)$. Note that $\lambda$ is high-filling, and $[P(a,\lambda)]=P$, and the clique of high-filling rays disjoint from $\lambda$ is $\tilde{F}^{-1}(P)$.  These are all consequences of Theorem~\ref{theorem:boundary_bijection}.

According to the generalization of Lemma $5.3.1.$ of \cite{boundary} with $m=2\delta + 20$, there exists $N' \in \N$ such that every loop at distance less than $2 \delta +20$ from a loop which $N'$-begins like $\lambda$ must $k$-begin like one of the rays of $\tilde{F}^{-1}(P)$. 

Pick a loop $a$ and denote by $s$ its number of intersection points with the equator. Denote by $(x_n)$ the infinite unicorn path $P(a,\lambda)$. Because $a$ has $s$ intersections with the equator, we have that for any $k'>0$, for any $n> k's$, $x_n$ at least $k'$-begins like $\lambda$ by the generalization of Lemma $3.2.7.$ of \cite{boundary}. Hence $x_{sN'+1}$ $N'$-begins like $\lambda$. Let us prove that $N:=sN'+1+2\delta$ satisfies the proposition.

Choose $Q \in U(P,N'+2\delta)$ and $\lambda' \in \tilde{F}^{-1}(Q)$. Again by Theorem~\ref{theorem:boundary_bijection}, we have $\lambda'$ high filling and $[P(a,\lambda')]=Q$.  Denote by $(y_n)$ the unicorn path $P(a,\lambda')$. By definition of $U(P,N'+2\delta)$, we have $\liminf_{i,j \rightarrow \infty} (x_i \cdot y_j)_a \geq N'$. Choose $i>sN'+1$ and $j \in \N$ such that $(x_i \cdot y_j)_a \geq N'$: any two geodesics $[a,x_i]$ and $[a,y_j]$ have their $(sN'+1)$ first terms $(2 \delta)$-Hausdorff close.

Because geodesics and unicorn paths are uniformly close (Generalization of Lemma~$3.2.5.$ of \cite{boundary}), it follows that some point of $(y_n)$, say $y_p$, is in the $2\delta+20$ neighborhood of $x_{sN+1}$. According to the generalization of Lemma $5.3.1.$ again, $y_p$ $k$-begins like one of the rays in $\tilde{F}^{-1}(P)$. Thus $\lambda'$ also $k$-begins like one of the rays in $\tilde{F}^{-1}(P)$. 
\end{proof}

\subsection{Homeomorphism}

This section is similar to section $5.4$ of \cite{boundary}. Lemmas $5.4.1.$ and $5.4.2.$ remain the same, and we get:

\begin{theorem}\label{theorem:boundary_homeo}
The map $F : \E \to \partial \RG$ is a homeomorphism.
\end{theorem}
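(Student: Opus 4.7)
The plan is to verify that $F$ and its inverse are each continuous, transferring between the two topologies through $\tilde F = F\circ q$. Since Theorem~\ref{theorem:boundary_bijection} already gives that $F$ is a bijection, only the topological half of the statement is in play. The topology on $\E$ has been defined as a quotient of the topology on $\HC$ (which is induced from the circle $\partial\tilde S$ via endpoints of high-filling rays), while the topology on $\partial\RG$ is the usual Gromov boundary topology. The bridge between them is the characterization of convergence to high-filling rays in terms of $k$-beginnings (Lemma~\ref{lemma:cover_converge_iff_k_begin}), together with the uniform closeness of unicorn paths to geodesics.

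For continuity of $F$, I would take a sequence of cliques $[\ell_n]\to[\ell]$ in $\E$. By definition of the quotient topology we may pick representatives $\ell_n \in [\ell_n]$ that cover-converge to some $\ell\in[\ell]$. Fix any loop $a$. By Lemma~\ref{lemma:cover_converge_iff_k_begin}, for every $k$ there is $N$ such that $\ell_n$ $k$-begins like $\ell$ for all $n\ge N$. Using the dependence of finite unicorn paths on the $k$-beginning of their long-ray argument (the analogue of Lemma~$3.2.7$ of \cite{boundary} discussed in Section~\ref{section:finite_unicorn_paths}), one shows that the unicorn paths $P(a,\ell_n)$ and $P(a,\ell)$ share arbitrarily long initial segments as $n\to\infty$. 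This forces the Gromov products $(P(a,\ell_n)_i\cdot P(a,\ell)_j)_a$ to tend to infinity, so $F([\ell_n])=[P(a,\ell_n)]\to[P(a,\ell)]=F([\ell])$ in $\partial\RG$. This is precisely the content of Lemma~$5.4.1$ of \cite{boundary}, which the paper states applies verbatim.

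For continuity of $F^{-1}$, the essential tool is Proposition~\ref{prop:neighborhoods_on_boundary}. If $Q_n\to P$ in $\partial\RG$, then for every $k$ the proposition yields an $N$ such that once $Q_n\in U(P,N)$, every $\lambda'\in\tilde F^{-1}(Q_n)$ $k$-begins like some $\lambda\in\tilde F^{-1}(P)$. By Lemma~\ref{lemma:cover_converge_iff_k_begin}, this means that after passing to a subsequence the chosen representatives cover-converge into the clique $\tilde F^{-1}(P)$, which by definition of the quotient topology gives $F^{-1}(Q_n)\to F^{-1}(P)$ in $\E$. The delicate point, and what I expect to be the main obstacle, is that a clique may contain several high-filling rays and continuity of $F^{-1}$ requires genuinely set-valued convergence: one needs not only that every ray in $\tilde F^{-1}(Q_n)$ approximates some ray in $\tilde F^{-1}(P)$, but also that every ray in $\tilde F^{-1}(P)$ is approximated. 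This reverse direction is handled by combining the compactness of cliques (Lemma~\ref{lemma:cliques_compact}) with continuity of $F$ already established, in the style of Lemma~$5.4.2$ of \cite{boundary}: if some ray of $\tilde F^{-1}(P)$ had no approximants, one extracts a convergent subsequence of rays in the $\tilde F^{-1}(Q_n)$ landing in a proper subclique, contradicting the injectivity of $F$ on cliques.
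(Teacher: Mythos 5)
Your proposal is correct and follows essentially the same route as the paper, which simply cites Theorem~5.4.3 of \cite{boundary}: continuity of $F$ via the $k$-beginning control on unicorn paths (Lemma~5.4.1 of \cite{boundary}), and continuity of $F^{-1}$ via Proposition~\ref{prop:neighborhoods_on_boundary} together with compactness of cliques (Lemma~5.4.2 of \cite{boundary}). The only cosmetic point is that continuity of $F$ is cleanest phrased through the universal property of the quotient topology (show $\tilde F=F\circ q$ is continuous on $\HC$) rather than by lifting convergent sequences of cliques to convergent representatives, but this does not affect the substance.
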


\begin{proof} This is Theorem $5.4.3.$ of \cite{boundary}.
\end{proof}

\part{Simultaneous dynamics of the actions and applications}

\section{Action of loxodromic elements on the circle}\label{section:action_lox}

In the previous sections, we proved that the loop graph of a surface and its boundary embed in a circle (the boundary of the conical cover), on which the mapping class group $\Mod(S;p)$ acts by homeomorphisms (Lemma \ref{lemma:mod_acts}).
We now want to study how this particular representation of $\Mod(S;p)$
in $\Homeo(S^1)$ is related to the action of $\Mod(S;p)$ on the loop graph
$L(S;p)$. More precisely, we prove that elements with a loxodromic
action on the loop graph necessarily have a simple dynamics on the
circle: they have finitely many periodic points (and thus rational
rotation number). The situation is as in Figure~\ref{figu:action_boundary}:
each loxodromic element fixes two points on the boundary of the
conical cover, and thus two cliques of high filling rays: we prove
that these cliques are finite, and have the same number of elements,
which are the periodic points for the action on the circle.

\begin{figure}[htb]
\labellist
\pinlabel $h$ at 160 210
\pinlabel $\textrm{Loop graph }\L(S;p)$ at 200 -40
\pinlabel $\textrm{Boundary of the conical cover}$ at 750 -40
\endlabellist
\centering
\includegraphics[scale=0.3]{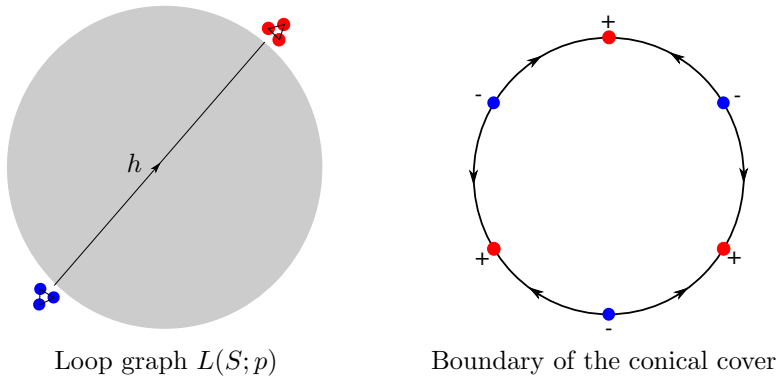}
\vspace{1cm}
\caption{On the left, the action of a loxodromic element $h$ on the loop graph: the two fixed points on the boundary are cliques. On the right, the action of $h^m$ (for $m$ minimal so that $h^m$ fixes its finite cliques pointwise) on the boundary of the conical cover: the elements of the two attractive and repulsive cliques associated to $h$ give the attractive and repulsive points for this circular action.}
\label{figu:action_boundary}
\end{figure}

\subsection{Finite cliques}
Recall that $\Mod(S;p)$ acts by isometries on the loop graph $\L(S;p)$. Let $h \in \Mod(S;p)$ be an element with a loxodromic action on $\L(S;p)$. This element fixes exactly two points on the boundary of $L(S;p)$. Equivalently, it preserves two cliques of high-filling rays. We denote by $\CC^-(h)$ and $\CC^+(h)$ respectively the repulsive and the attractive cliques of~$h$. 
We say that a homeomorphism $g$ of the circle has a \emph{Morse-Smale dynamics} if it has finitely many fixed points, which are all repulsive or attractive (as in the right side of Figure \ref{figu:action_boundary}).

\begin{theorem}\label{theorem:finite cliques}
Let $h \in \Mod(S;p)$ be a loxodromic element. The cliques $\CC^-(h)$ and $\CC^+(h)$ of high-filling rays associated to $h$ are finite and have the same number of elements.
Moreover, if we identify $S^1$ as the boundary of the conical cover,
then there exists $k\in \N$ such that the action of $h^k$ on $S^1$
has a Morse-Smale dynamics with fixed points exactly the
high-filling rays of the attractive and repulsive cliques. 
\end{theorem}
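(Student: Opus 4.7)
The plan is to transfer the loxodromic dynamics of $h$ on $L(S;p)$ to the circle $S^1 = \partial\tilde{S}$ and then exploit classical circle dynamics together with the disjointness of the cliques. As a loxodromic isometry of the Gromov-hyperbolic graph $L(S;p)$, $h$ acts on $\partial L$ with north-south dynamics, with attractive fixed point $P^+ = F(\CC^+(h))$ and repulsive fixed point $P^- = F(\CC^-(h))$. The technical bridge to the circle is Proposition \ref{prop:neighborhoods_on_boundary}: for each $k$, it furnishes a neighborhood $U(P^\pm,N_k)$ of $P^\pm$ in $\partial L$ whose $\tilde F$-preimage consists only of rays that $k$-begin like some element of $\CC^\pm$. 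A $k$-beginning determines an open arc of $S^1$ and only finitely many $k$-beginnings exist, so the union $V_k^\pm$ of $k$-begin arcs realized by $\CC^\pm$ is an open neighborhood of $\CC^\pm$ in $S^1$ with $\bigcap_k V_k^\pm = \CC^\pm$. Extending from high-filling rays to all points of $S^1$ (by combining the density of loop endpoints in $S^1$, the continuity of $h$, and the analogous control on loops whose Gromov product with $P^\pm$ is large, which follows from the same unicorn-path estimates as Proposition \ref{prop:neighborhoods_on_boundary}), together with the uniform north-south convergence on $\partial L$, yields the circle push-in property: for every sufficiently large $k$ there is $n_0$ with $h^n(S^1\setminus V_k^-)\subseteq V_k^+$ for all $n\geq n_0$, and symmetrically for $h^{-n}$.

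Next I would show that the rotation number $\rho(h)$ is rational. If it were irrational, classical circle theory gives a unique minimal set $K\subseteq S^1$ on which every forward orbit accumulates. The push-in forces $K\subseteq \CC^+$, and symmetrically $K\subseteq \CC^-$, contradicting $\CC^+\cap\CC^-=\emptyset$ (distinct fixed points of a loxodromic action). Writing $\rho(h)=p/q$ in lowest terms, every periodic orbit of $h$ has period exactly $q$, so $h^q$ has zero rotation number and a nonempty fixed set.

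The final step is to identify $\mathrm{Fix}(h^q)=\CC^+\cup\CC^-$, prove finiteness, and read off Morse-Smale dynamics. The inclusion $\mathrm{Fix}(h^q)\subseteq\CC^+\cup\CC^-$ follows because a fixed point of $h^q$ outside $\CC^-$ has a finite orbit that must accumulate on $\CC^+$ by the push-in, hence lies in $\CC^+$. Conversely, any $\lambda\in\CC^+$ has its full $h$-orbit in the compact $h$-invariant set $\CC^+$, so the backward iterates cannot accumulate on the disjoint $\CC^-$; the push-in applied to $h^{-1}$ then forces $\lambda$ to be periodic, hence fixed by $h^q$. I expect the subtlest step to be the finiteness of $\CC^\pm$, which I would argue as follows: on each arc $A$ of $S^1\setminus\mathrm{Fix}(h^q)$, $h^q$ acts monotonically from a repelling endpoint to an attracting endpoint, and the push-in property forces a $\CC^+$ endpoint to be attracting and a $\CC^-$ endpoint to be repelling, so the two endpoints of $A$ lie in opposite cliques. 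If $\CC^+$ had an accumulation point $\lambda$, then arbitrarily small arcs with $\lambda$ as one endpoint would exist; their opposite endpoints, all in $\CC^-$, would accumulate on $\lambda$, forcing $\lambda\in\overline{\CC^-}=\CC^-$ and contradicting disjointness. Hence $\CC^\pm$ are compact and discrete, therefore finite; the alternation of endpoint types around $S^1$ gives $|\CC^+|=|\CC^-|$; and the monotone dynamics on each of the finitely many complementary arcs is exactly Morse-Smale, with $k=q$ witnessing the theorem.
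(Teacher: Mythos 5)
Your route is genuinely different from the paper's (which never mentions rotation numbers: it decomposes $S^1\setminus(\CC^+(h)\cup\CC^-(h))$ into intervals, shows by compactness and disjointness of the cliques that only finitely many intervals have endpoints in different cliques, and then collapses the remaining intervals to points). But your argument has a real gap, and it sits exactly where the paper has to do its hardest work. Several of your steps — the ``push-in property'' for \emph{all} points of $S^1$, the claim that a complementary arc of $\mathrm{Fix}(h^q)$ must have one endpoint in each clique, and hence the finiteness argument — all require producing, inside an arbitrary arc of $S^1$ abutting a high-filling ray, a point whose forward dynamics under $h$ is controlled by the loxodromic action on $\L(S;p)$. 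The only such points are endpoints of \emph{simple} loops (or rays): these are the vertices of the loop graph, and only for them does ``$h$ is loxodromic'' tell you anything. You justify this by ``density of loop endpoints in $S^1$,'' but that density statement (used in Lemma~\ref{lemma:mod_acts}) concerns \emph{all} loops, simple and non-simple, because lifts of $p$ are dense; the forward orbit of a non-simple loop is invisible to the loop graph. Simple loops are emphatically \emph{not} obviously dense near a high-filling ray — indeed Lemma~\ref{lemma:simple_is_compact} shows loop endpoints are isolated in the set $E$ of simple-geodesic endpoints, and the paper explicitly flags the simplicity of approximating loops as the nontrivial issue. The missing ingredient is precisely Lemma~\ref{lemma:loop_approximation}: every high-filling ray is accumulated on both sides by endpoints of simple loops. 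Its proof is not soft; it uses the non-preperiodicity of the equator-crossing sequence of a high-filling ray (Lemma~\ref{lemma:high_filling_equator_sequence}) and an explicit construction of a simple loop that $k$-begins like $\lambda$ and closes up on the prescribed side.

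Concretely, here is where your argument would fail without that lemma: nothing you have established rules out a closed arc $I$ with both endpoints in $\CC^-(h)$, invariant under $h^q$, containing no simple loop or short ray in its interior. On such an arc $h^q$ could act monotonically toward one endpoint with no contradiction, since no point of $I$ is forced by the loxodromic action to converge into $\CC^+(h)$; your ``opposite cliques at the two endpoints'' step, and with it the finiteness and Morse--Smale conclusions, would collapse. Once Lemma~\ref{lemma:loop_approximation} is in hand, your rotation-number detour becomes unnecessary overhead: the paper gets finiteness directly from compactness plus disjointness of the cliques (the mixed-endpoint intervals cannot accumulate), and then uses the approximating simple loops to kill the same-endpoint intervals and to read off the Morse--Smale dynamics. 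I would recommend either proving the loop-approximation statement or restructuring along the paper's lines.
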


Because the proof involves some technical lemmas, we delay it until after be discuss some consequences and context.
First, we immediately conclude that the circle action induced
by a loxodromic homeomorphism has a rational rotation number.

\begin{corollary}
Let $h \in \Mod(S;p)$ with a loxodromic action on the loop graph $\L(S;p)$. As a
homeomorphism of the boundary $S^1$ of the conical cover, $h$ has a
rational rotation number.
\end{corollary}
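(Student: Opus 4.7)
The plan is to deduce this corollary as an immediate consequence of Theorem~\ref{theorem:finite cliques} together with two classical facts about the Poincar\'e rotation number $\rho$ for orientation-preserving homeomorphisms of $S^1$: (i) if $f \in \Homeo^+(S^1)$ has a fixed point on $S^1$, then $\rho(f) = 0 \in \R/\Z$; and (ii) for every $n \in \N$, $\rho(f^n) = n\,\rho(f)$ in $\R/\Z$. Both follow directly from the definition $\rho(f) = \lim_{n\to\infty} (\tilde f^n(x) - x)/n \pmod 1$ applied to any lift $\tilde f$ of $f$ to $\R$.

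First I would observe that since $\Mod(S;p)$ consists of isotopy classes of orientation-preserving homeomorphisms of $S$ fixing $p$, the boundary action on $\partial \tilde S$ produced in Lemma~\ref{lemma:mod_acts} preserves the cyclic order on $S^1$ (the extension is built from the cyclic order of geodesic rays from $\tilde p$). Hence $h$ acts as an element of $\Homeo^+(S^1)$ and $\rho(h)$ is well defined.

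Next, I would invoke Theorem~\ref{theorem:finite cliques} to produce an integer $k \in \N$ such that $h^k$ acts on $S^1$ with Morse--Smale dynamics, whose fixed point set is exactly $\CC^-(h) \cup \CC^+(h)$. Since $h$ is loxodromic on $\L(S;p)$, it fixes two points on $\partial \RG$, and by Theorem~\ref{theorem:boundary_bijection} each of these points corresponds to a nonempty clique of high-filling rays; thus $\CC^-(h)$ and $\CC^+(h)$ are nonempty, so $h^k$ has at least one fixed point on $S^1$. Fact (i) then gives $\rho(h^k) = 0$ in $\R/\Z$, and fact (ii) yields $k\,\rho(h) \equiv 0 \pmod 1$, whence $\rho(h) \in \tfrac{1}{k}\Z/\Z \subset \Q/\Z$.

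There is essentially no obstacle here: the only ingredients beyond Theorem~\ref{theorem:finite cliques} are the (completely standard) facts about rotation numbers, together with the orientation-preservation of the circle action and the non-emptiness of $\CC^\pm(h)$, both of which were already established earlier in the paper. The whole argument is a short observation and need not occupy more than a few lines in the final text.
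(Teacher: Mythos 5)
Your argument is correct and is exactly the route the paper takes: the corollary is stated as an immediate consequence of Theorem~\ref{theorem:finite cliques}, since some power $h^k$ has fixed points on $S^1$ (the nonempty cliques $\CC^\pm(h)$), forcing $\rho(h^k)=0$ and hence $\rho(h)\in\tfrac{1}{k}\Z/\Z$. The paper leaves these standard rotation-number facts implicit, so your write-up simply makes the same observation explicit.
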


Next, because the size of the cliques $\CC^\pm(h)$ is finite, it can be
used to differentiate between homeomorphisms with the following definition.
\begin{definition}
Let $h \in \Mod(S;p)$ be a loxodromic element. The \emph{weight} of $h$
is the number of elements of each clique associated to $h$ (i.e. the
cardinality of $\CC^\pm(h)$).  We denote the weight of $h$ by $w(h)$.
\end{definition}

Observe that $w(h) = w(h^{-1})$ and that $w(h)$ is a conjugacy invariant.
In Section~\ref{section:subsurfaces}, we will show that $w(h)$ is unbounded for
an infinite type surface and uniformly bounded for finite type surfaces. In Section~\ref{section:quasimorphisms},
we will show that if $G$ is a subgroup of $\Mod(S;p)$ which contains two
loxodromic elements $g$ and $h$ with $w(g) \neq w(h)$, then the space
$\tilde Q(G)$ of non trivial quasimorphisms on $G$ is infinite dimensional.\\

We now proceed to the proof of Theorem~\ref{theorem:finite cliques}.
The first step is to show that every high-filling ray is accumulated on
both sides by loops in the boundary of the conical cover.  Note that
this is actually nontrivial, and the issue is the simplicity (simple-ness)
of the approximating loops.  As an example of the complexity, note that loops are actually isolated (in the set of simple rays and loops)
in the boundary of the conical cover, because to
get close to a loop, an approximating ray or loop must spiral around
a puncture and then return, which can only be done by a geodesic which intersects itself.

Let $\lambda$ be a ray; it crosses a (possibly empty) sequence of equator
arcs.  Denote by $\lambda_n$ the sequence of arcs, and denote by $\lambda_n^+$
and $\lambda_n^-$ the right and left endpoints, respectively, of these arcs
on the boundary of the conical cover.
See Figure~\ref{figu:equator_crossings}.
Note that $\lambda_n$ refers to an equator
crossing in the surface $S$, but $\lambda_n^\pm$ refer to endpoints of
lifts of the equator arcs.  For example a long ray might spiral around a closed
simple loop (not based at $p$); in this case the sequence $\lambda_n$ would be periodic,
but the sequences $\lambda_n^\pm$ would converge to one of the endpoints of a lift
of the simple loop.  Also note that many equator arcs can share a single endpoint.

\begin{figure}[htb]
\centering
 \includegraphics[scale=0.7]{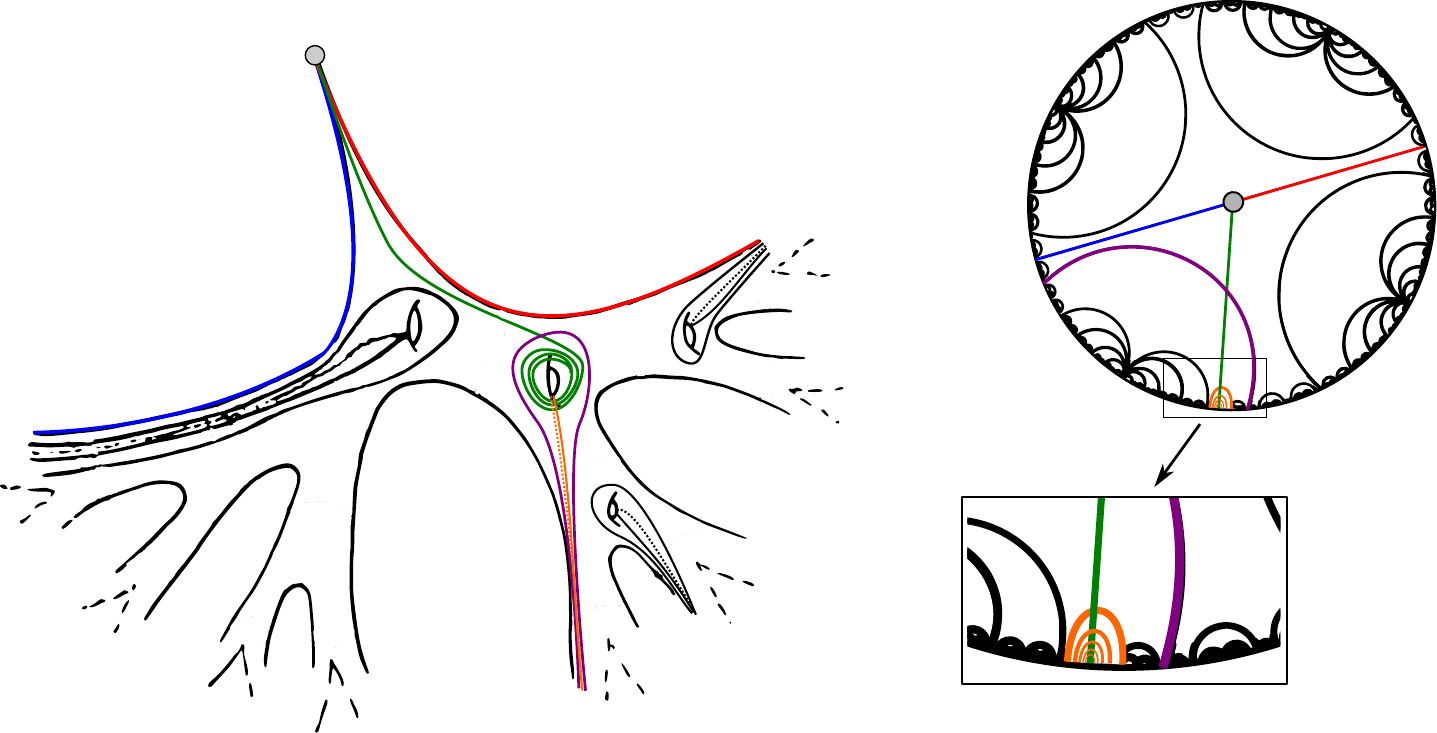}

\caption{In green, a long ray $\lambda$, its equator crossings $\lambda_n$, and the endpoints
$\lambda_n^+$ and $\lambda_n^-$.}
\label{figu:equator_crossings}
\end{figure}

\begin{lemma}\label{lemma:high_filling_equator_sequence}
In the above notation, if $\lambda$ is high-filling, then
the sequences $\lambda_n^+$ and $\lambda_n^-$ are infinite and have
infinitely many distinct elements.
In addition, the sequence $\lambda_n$ is not preperiodic.
\end{lemma}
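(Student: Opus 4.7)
The plan is to establish the three assertions in order, each by contradiction after the first.

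The first assertion is immediate: a long ray is by definition non-proper, hence (as already noted at the start of Section~\ref{section:loop_graphs}) it must cross the equator infinitely many times, so $\lambda_n$, $\lambda_n^+$, and $\lambda_n^-$ are all infinite sequences.

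For the second assertion, I treat $\lambda_n^+$ (the case of $\lambda_n^-$ is symmetric) and suppose it takes only finitely many distinct values. Pigeonhole yields a point $x \in \partial\tilde S$ occurring as $\tilde e_n^+$ for infinitely many $n$, producing infinitely many distinct lifted equator arcs in $\tilde S$ sharing the common ideal endpoint $x$. Since the equator is built from proper arcs of $S$ ending at punctures, $x$ comes from a parabolic fixed point $\tilde q \in \partial\H^2$ of the Fuchsian group. Working in the universal cover and using horoball geometry around $\tilde q$, together with the $\langle P_p\rangle$-action that produces the conical cover, a geodesic which crosses infinitely many arcs ending at $x$ must accumulate at $x$ in $\tilde S$. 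Hence $\lambda$ ends at $x$, so $\pi(\lambda)$ is a simple geodesic in $S$ ending at the puncture associated with $x$, making $\lambda$ a short ray or a loop and contradicting that $\lambda$ is long.

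For the third assertion, suppose $\lambda_n = \lambda_{n+T}$ in $S$ for all $n \geq N$ and some $T > 0$. Lift $\lambda$ to a complete geodesic $\hat\lambda$ in $\H^2$ with lifted crossings $\hat e_n$. Since $e_n = e_{n+T}$ in $S$, each pair $\hat e_n, \hat e_{n+T}$ differs by some deck transformation $\phi_n \in \Gamma$; the consistency of the geodesic $\hat\lambda$ between consecutive crossings forces the $\phi_n$ to equal a single element $\phi$. Because $\phi$ shifts the symbolic crossing sequence of $\hat\lambda$ forward by $T$ positions, it preserves $\hat\lambda$ asymptotically and fixes its ideal endpoint. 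A parabolic $\phi$ is excluded exactly by the second-assertion argument, so $\phi$ is hyperbolic; its axis projects to a closed geodesic $\gamma \subset S$, and $\lambda$ spirals asymptotically onto $\gamma$.

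Finally, a long ray spiraling asymptotically onto a closed geodesic cannot be high-filling. Since $\lambda$ is eventually confined to any given thin tubular neighborhood of $\gamma$, and using Lemma~\ref{lem:proper_arcs} together with the topological hypotheses on $S$, a component of $S \setminus \lambda$ adjacent to $p$ retains enough topology to contain either a simple loop based at $p$ or a simple short ray from $p$; such an object is disjoint from $\lambda$, contradicting the assumption that $\lambda$ lies in a connected component of $\RGC(S;p)$ consisting only of high-filling rays. The main obstacle is precisely this last topological step, which requires a case analysis depending on whether $\gamma$ is separating, which side of $\gamma$ contains $p$, and the local topology of $S \setminus \lambda$ near $p$; the crux is the topological observation that a single simple arc asymptotic to a closed geodesic cannot exhaust the topology of a surface with $\chi(S) < -2$.
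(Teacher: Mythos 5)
Your treatment of the first two assertions is essentially sound and close to the paper's: the infinitude of the crossing sequence follows from non-properness of a long ray (the paper instead derives a contradiction with high-fillingness via unicorn paths, but your shortcut is legitimate), and for the second assertion both you and the paper argue that finitely many distinct endpoints would force $\tilde\lambda$ to accumulate at a single boundary point, making $\pi(\lambda)$ proper and hence not a long ray. One caveat: you assert that the common endpoint $x$ ``comes from a parabolic fixed point,'' but equator arcs may exit ends of $S$ that are not punctures (e.g.\ ends accumulated by genus), so the horoball argument does not apply verbatim. The paper instead observes that a geodesic cannot backtrack in the conical cover, so repeated values of $\lambda_n^+$ must occur consecutively; finitely many distinct values then forces the sequence to be eventually constant, which is a cleaner route to the same contradiction.

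The genuine gap is in the third assertion. Your deck-transformation argument plausibly reaches the same intermediate conclusion as the paper --- that $\lambda$ eventually spirals around a closed curve $\gamma$ --- but the step that actually contradicts high-fillingness, namely producing a loop or short ray from $p$ disjoint from $\lambda$, is exactly what you defer to an unproved ``topological observation'' and a case analysis you do not carry out. The claim that a component of $S\setminus\lambda$ adjacent to $p$ ``retains enough topology'' is not established, and even granting it you would still need to exhibit an essential simple loop based at $p$ (or a short ray) inside such a component. The paper resolves this with an explicit construction: the loop based at $p$ which performs the crossings $\lambda_1,\ldots,\lambda_{m+k}$ (following $\lambda$ out and once around the spiral) and then returns via the reversed crossings $\lambda_{m-1},\ldots,\lambda_1$. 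This loop is simple and disjoint from $\lambda$ because $\lambda$ continues to spiral monotonically inward toward $\gamma$, so a parallel push-off of one full turn, closed up through $p$, misses the rest of $\lambda$; no Euler-characteristic count and no separating/non-separating case analysis is needed. Supplying this (or an equivalent) explicit disjoint loop is the missing idea in your proposal.
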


\begin{proof}
First note that the sequence $\lambda_n$ is infinite: if it were finite, then
the supposedly high-filling ray would exit the surface $S$ out a puncture or
end (because it can not not leave the last fundamental domain it enters).
In particular, if we take any loop $\ell$, then the number of intersections
between $\lambda$ and $\ell$ is finite, which by the unicorn path
construction means that there is a finite path in $\RGC$ between $\ell$ and $\lambda$,
which contradicts that $\lambda$ is high-filling.

Thus the sequences $\lambda_n^\pm$ are infinite.  Note that if we have $\lambda_n^+=\lambda_m^+$
for some $m>n$, then $\lambda_n^+ = \lambda_{n+1}^+ = \cdots = \lambda_m^+$, and
similarly for $\lambda_n^-$  This is
because a geodesic cannot backtrack in the conical cover.  The only way to have repeated
elements of the sequence $\lambda_n^\pm$ is to cross a sequence of equator arcs in a row, all
with the same endpoint.  Now suppose that $\lambda$ is high-filling and 
$\lambda_n^+$ or $\lambda_m^-$ (wlog $\lambda_n^+$) has finitely many distinct elements.
By the above argument, this means that $\lambda_n^+$ is eventually constant, which means
that $\lambda$ simply spirals around the endpoint cusp or end which $\lambda_n^+$ represents,
which contradicts that $\lambda$ is high-filling.

Finally, suppose that $\lambda_n$ is preperiodic, say
\[
(\lambda_n)_n = (\lambda_1, \ldots, \lambda_{m-1}, \overline{\lambda_m, \ldots, \lambda_{m+k}})
\]
Then $\lambda$ eventually spirals
around the closed loop defined by the finite periodic sequence $\lambda_m, \ldots, \lambda_{m+k}$.
Then the loop at $p$ which does the crossings $\lambda_1, \ldots, \lambda_{m+k}$,
followed by the reversed crossings $\lambda_{m-1}, \ldots, \lambda_1$ is simple and disjoint
from $\lambda$, which contradicts that $\lambda$ is high-filling.
\end{proof}

\begin{lemma}\label{lemma:loop_approximation}
Let $\lambda \in S$ be a high-filling ray.
There are two sequences of loops $x_i$ and $y_i$
such that as points on the boundary of the conical cover,
the sequences $x_i$ and $y_i$ limit to $\ell$ from the
left and right, respectively.
\end{lemma}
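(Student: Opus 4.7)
The plan is to produce the two sequences via the infinite unicorn path construction, applied to two carefully chosen simple loops.

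Observe first that $\lambda$ is loop-filling: if it were disjoint from some loop, then $\lambda$ would be $1$-filling, contradicting high-fillingness. Hence every simple loop $a$ based at $p$ meets $\lambda$, and since $\lambda$ is not in the component of $a$ in $\RGC$, the unicorn path $P(a,\lambda)=(x_n)_{n}$ is infinite and its vertices are simple loops (Lemma~3.2.3 of~\cite{boundary}). By the generalization of Lemma~3.2.7 of~\cite{boundary}, $x_n$ $k(n)$-begins like $\lambda$ with $k(n)\to\infty$; by Lemma~\ref{lemma:cover_converge_iff_k_begin}, $x_n$ cover-converges to $\lambda$, so its endpoints $\tilde x_n$ on $\partial\tilde S$ tend to $q$, the endpoint of $\tilde\lambda$.

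Near $p$, the finitely many equator arcs meeting $p$ divide a small neighborhood of $p$ into sectors, and $\lambda$ leaves $p$ through one of them. Because $S$ is not a disk or annulus and has at least one end other than $p$, we can build two simple loops $a^+$ and $a^-$ based at $p$ whose terminal arcs arrive at $p$ through the sectors immediately to the right and to the left of $\lambda$'s outgoing sector, respectively. Running the unicorn construction with $a^+$ and $a^-$ gives simple loop sequences $(x_n^+)$ and $(x_n^-)$, both with endpoints tending to $q$ by the previous paragraph. The key claim is that $\tilde x_n^+$ accumulates on $q$ from the right and $\tilde x_n^-$ from the left. Indeed, the preferred lift of $x_n^\pm$ is homotopic (rel $\tilde p$) to the concatenation of $\tilde\lambda|_n$ with the lift of the terminal arc of $a^\pm$ starting at $\tilde p_n$ (the lift of the $n$-th intersection of $a^\pm$ with $\lambda$ along $\lambda$). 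Since points of $\partial\tilde S$ correspond order-preservingly to directions at the cusp $\tilde p$ (parametrized by the horocycle at $\tilde p$), and since the closing arc reaches $\tilde p$ along the prescribed side of $\tilde\lambda$, the endpoint $\tilde x_n^\pm$ falls on the corresponding side of $q$.

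The main obstacle is the last step: making rigorous that the sector at $p$ through which the terminal arc of $a^\pm$ arrives actually controls the side of $q$ on which $\tilde x_n^\pm$ lies. To formalize this, one must trace the correspondence between (i) sectors of the equator decomposition at $p$, (ii) arcs of the horocycle at $\tilde p$ in the conical cover, and (iii) arcs of $\partial\tilde S$ cut out by lifts of the equator meeting $\tilde p$, and verify that the (possibly large) deck translation required to lift the terminal arc of $a^\pm$ starting at $\tilde p_n$ rather than at $\tilde p$ does not scramble this correspondence for $n$ large. This is a careful but routine check in the geometry of the conical cover.
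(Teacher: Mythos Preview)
Your idea of producing the approximating loops via unicorn paths $P(a^\pm,\lambda)$ is natural, and the first paragraph is fine: the unicorn loops do cover-converge to $\lambda$.  The gap is in the second and third paragraphs, where you claim that the \emph{sector at $p$ in which $a^\pm$ returns} governs the side of $q$ on which the endpoints $\tilde x_n^\pm$ sit.  This is the heart of the matter, and I do not believe it is ``routine.''

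Concretely, the side of $q$ on which $\tilde x_n$ lands is determined by how the lift of the terminal $a$-segment behaves \emph{after} it leaves $\tilde\lambda$ at $\tilde p_n$, not by what it does near $p$.  The lifted $a$-segment starts at $\tilde p_n\in\tilde\lambda$, immediately turns to one side of $\tilde\lambda$ (the side being dictated by the sign of the crossing of $a$ and $\lambda$ at $p_n$, which has nothing to do with the return sector of $a$), and may then recross $\tilde\lambda$ several times before landing at some lift $\tilde p'\ne\tilde p$.  Each such recrossing flips the side, so the final side depends on a parity that you have not controlled.  Your sentence ``the closing arc reaches $\tilde p$ along the prescribed side of $\tilde\lambda$'' is already a symptom of the problem: the closing arc reaches $\tilde p'$, not $\tilde p$, and the direction of arrival at $\tilde p'$ carries no information about the position of $\tilde p'$ relative to $q$ on $\partial\tilde S$.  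The order-preserving correspondence between directions at $\tilde p$ and points of $\partial\tilde S$ lets you read off the side from the \emph{outgoing} direction of the geodesic $\tilde x_n$ at $\tilde p$, but that direction is not the return direction of $a$; it is infinitesimally close to the direction of $\tilde\lambda$ and its side is precisely what is in question.

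The paper avoids this difficulty entirely.  Instead of unicorn paths, it builds the approximating loop by hand: follow $\lambda$ through its first $k'$ equator crossings, then deliberately step off to the left across a fresh equator arc $s$ (whose existence requires Lemma~\ref{lemma:high_filling_equator_sequence}), and close up using the spare complexity of $S\setminus(x\cup s)$.  Because the divergence from $\lambda$ is a single, explicit equator crossing to the prescribed side and the remainder of the loop never touches the initial segment again, the side in the cover is forced.  If you want to salvage your approach, you would need an argument that for a suitable $a$ the unicorn loops eventually stay on one side of $\tilde\lambda$ (or that both sides occur infinitely often for every $a$); neither is obvious, and the paper's explicit construction is the cleaner route.
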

\begin{proof}
The sketch of the proof is that in order to produce a loop
very close to $\lambda$ on the left (wlog), we will follow the
beginning of $\lambda$ and then diverge to close up the loop.
We must ensure two things: first, the loop must actually
be to the left in the boundary of the conical cover, and second,
the loop must be simple.

Let us be given $k$; we will construct a loop to the left
of $\lambda$ which $k$-begins like $\lambda$, and this will prove
the lemma.  Consider the complement of the union of the equator
(all arcs in the equator) and the
sub-ray which is the beginning of $\lambda$ until the intersection
with the equator crossing $\lambda_k$.  
We will denote the intersection
of the ray $\lambda$ and the equator segment $\lambda_k$ by
$u_k$, so $\lambda$ is the sequence of segments
$(p, u_1)$, $(u_1, u_2), \ldots$.
Denote by $\Omega_k$
the component of this complement which is on the left of the subsegment
$(u_{k}, u_{k+1})$.  In particular, if we are trying to
construct a loop $x$, and $x$ follows $\lambda$ until $u_k$,
then it arrives in $\Omega_k$.  

The boundary of the closure of $\Omega_k$ may contain an equator arc
$s$ which is distinct from $\lambda_k$ and $\lambda_{k+1}$ and
to the left of $\lambda$, or it may not.
If it does not, we claim that there is some $k' > k$ 
such that the boundary of the closure of $\Omega_{k'}$ 
does in fact contain such an equator arc.  This is essentially
a consequence of Lemma~\ref{lemma:high_filling_equator_sequence}:
if the closure of the boundary of $\Omega_k$ does not contain
another equator arc, the reason is either that 
\begin{enumerate}
\item $\lambda_k$ and $\lambda_{k+1}$ are the only equator arcs
to the left of $\lambda$.  In this case, the endpoints $\lambda_{k}^-$
and $\lambda_{k+1}^-$ are the same.
\item Another lift of the initial segment of $\lambda$
cuts away all the other equator segments to the left of $\lambda$.
\end{enumerate}
By Lemma~\ref{lemma:high_filling_equator_sequence}, the endpoint
sequence $(\lambda_n^-)_n$ has infinitely many distinct elements,
so there is some $k'>k$ for which (1) does not hold.  If case (2)
holds, then consider this other lift $\tilde{\lambda}$.
The two lifts cannot have the same endpoints, because by
Lemma~\ref{lemma:high_filling_equator_sequence} the
sequence $(\lambda_n)_n$ is not preperiodic.  Thus, if we follow
the lift $\tilde\lambda$ until it diverges from $\lambda$ (at
a $k' > k$), case (2) cannot hold, and we can find a suitable $k'$.

Once we have a $k'$ such that the closure of the boundary of
$\Omega_{k'}$ does contain an equator arc $s$ disjoint
from $u_{k'}$ and $u_{k'+1}$ and to the left of $\lambda$,
we can extend the (simple arc) beginning of $\lambda$ to cross
$s$.  That is, there is a simple arc from $p$ to $s$ with equator
crossings $(u_1, u_2, \ldots, u_k, \ldots, u_{k'}, s)$.

Denote this arc by $x$.  The union of the arc $x$ and the equator
arc $s$ is a tripod.  If we remove $x \cup s$ from $S$, we
add $2$ to its (negative) Euler characteristic, but we do not
disconnect the surface.  Thus, the surface $S - (x \cup s)$
has enough complexity to have an essential simple loop $z$.
Note here we are using the convenience assumption we made
in Section~\ref{sec:surfaces} that $\chi(S) < -2$.
Then connect $x$ to $z$ with a simple arc $y$ in $S - (x \cup s \cup z)$.
Then the sequence $x,y,z,\bar{y}, \bar{x}$, where $\bar{x}$
means the reverse of $x$, gives a simple loop which is
forced to $k$-begin like $\lambda$ and lie to the left.
\end{proof}

\begin{proof}[Proof of Theorem~\ref{theorem:finite cliques}]
By Lemma~\ref{lemma:cliques_compact}, the cliques $\CC^+(h)$
and $\CC^-(h)$ are disjoint compact subsets of $\partial \tilde{S} = S^1$.
Let $A = S^1 \setminus (\CC^+(h) \cup \CC^-(h))$ be the complement
of the union of the cliques.  The only open sets in $S^1$ are intervals,
so $A$ is a union of intervals.  Write $A = \bigcup_{c\in C} I_c \cup \bigcup_{d \in D} I_d$, 
where each $I$ is an interval, and $C, D$ index all the intervals whose
endpoints lie in the same clique or in different cliques, respectively.  That is,
for all $d \in D$, we have one endpoint of $I_d$ in $\CC^+(h)$ and one endpoint
in $\CC^-(h)$. 

We claim that $|D|$ is finite.  To see this, suppose it is infinite, so we can produce
an infinite sequence $(d_i)_i$ such that each $I_{d_i}$ is distinct.  Let the endpoints
of $I_{d_i}$ be denoted $a_i$ and $b_i$.  By taking a subsequence (and flipping
the endpoints, wlog), we may assume that $a_i \in \CC^+(h)$ and $b_i \in \CC^+(h)$
for all $i$.  Because $S^1$ is compact, we may take further subsequences and assume that
$a_i$ and $b_i$ are convergent.  Consider the length of $I_{d_i}$ in the metrized
standard unit circle.  Because the $I_{d_i}$ are distinct, the lengths must go to $0$.
Therefore $\lim_{i\to\infty}a_i = \lim_{i\to\infty} b_i$.  Because $\CC^+(h)$ and $\CC^-(h)$
are compact and thus closed, this limit point must be in both $\CC^+(h)$ and $\CC^-(h)$,
but these sets are disjoint, so we have a contradiction, and we conclude that $|D|$ is finite.

Since $|D|$ is finite, there is some power $h^k$ such that $h^k(I_d) = I_d$
for all $d \in D$.  The union $\bigcup_{d\in D} I_d$ is a finite union of open
intervals, so we can write the complement as
$S^1 \setminus \bigcup_{d\in D} I_d = \bigcup_{e \in E}I_e$, where $|E|$ is finite
(in fact, $|E|=|D|$ because the intervals alternate)
and each $I_e$ is a closed interval whose endpoints are both in $\CC^+(h)$
or both in $\CC^-(h)$.  The interval $I_e$ can contain only points in $\CC^+(h)$
or points in $\CC^-(h)$, but not both (because that would imply the existence of
another interval $I_d$).  To clarify, note that the intervals $I_e$ are not the
same as $I_c$.  Since $h^k(I_d) = I_d$ for all $d\in D$, we also have $h^k(I_e) = I_e$
for all $e \in E$.  We claim that each $I_e$ consists of a single point.  Given
any $I_e$, let its endpoints be $a$ and $b$.  Wlog assume they are in $\CC^-(h)$, so
any point in $\CC^+(h) \cup \CC^-(h)$ which lies in $I_e$ must be in $\CC^-(h)$.
By Lemma~\ref{lemma:loop_approximation}, if $a \ne b$, then there is a loop $\ell$
whose endpoint on the boundary on the conical cover lies between $a$ and $b$.
Because $h$ is loxodromic, $h^{nk}(\ell)$ must converge to some point
in $\CC^+(h)$ as $n \to\infty$.  Also, $h^k(I_e) = I_e$, so $h^{nk}(\ell) \in I_e$
for all $n$.  Therefore, the limit of $h^{nk}(\ell)$ is a point in $\CC^+(h)$
which lies in $I_e$, which contradicts our earlier assertion about $I_e$ containing
only points in $\CC^-(h)$.  The only way out of the contradiction is if $I_e$ is
in fact a single point.

Thus we have proved that there are finitely many intervals $I_d$, each of which has one
endpoint in $\CC^+(h)$ and one endpoint in $\CC^-(h)$, and the complement of
the $I_d$ is a finite set of points.  This implies that the cliques $\CC^\pm(h)$
are finite and have the same number of points.  It remains to show the Morse-Smale
dynamics on the complementary intervals $I_d$.  To see this, we apply
Lemma~\ref{lemma:loop_approximation} again; this implies that for each $I_d$,
there is some loop $\ell \in I_d$.  Acting by $h^k$ must cause $\ell$ to 
limit to the endpoint in $\CC^+(h)$, and acting by $h^{-k}$
must cause $\ell$ to limit to the other endpoint, which lies in $\CC^-(h)$.
That is, exactly the Morse-Smale dynamics as claimed.
\end{proof}

\subsection{Links between cliques and laminations}
In the finite type case, there is a well known description of the boundary
of the curve complex in terms of minimal filling laminations. In this section,
we relate this description with our description of the loop graph in terms of cliques.

Let $S$ be a finite type surface with a preferred puncture $p$. Let $h$ be a
pseudo-Anosov element of $S$ which fixes $p$. Then $h$ preserves two transverse
foliations $\mathcal{F}^+$ and $\mathcal{F}^-$ on $S$, and the mapping class of
$h$ in $\Mod(S;p)$ preserves two cliques of high-filling $\CC^+(h)$ and $\CC^-(h)$
on $S$. These objects are related in the following way:

\begin{lemma}
The leaves of $\mathcal{F}^+$ (resp. $\mathcal{F}^-$) ending in $p$ are the
high-filling rays of $\CC^+(h)$ (resp. $\CC^-(h)$).
\end{lemma}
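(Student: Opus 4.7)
The plan is to identify the leaves of $\mathcal{F}^+$ ending at $p$ with a clique of high-filling rays preserved by $h$, and then to use the fact that the loxodromic element $h$ has a unique attracting and a unique repelling clique on the boundary of the loop graph to conclude the identification. By the standard Nielsen--Thurston theory of pseudo-Anosovs, the point $p$ is either a regular point or a prong singularity of $\mathcal{F}^\pm$. In both cases only finitely many leaves of $\mathcal{F}^+$ end at $p$, and these leaves are permuted cyclically by $h$ (so fixed setwise by some power $h^k$). We take this finite set $R^+$ of leaves as our candidate for $\CC^+(h)$, and analogously define $R^-$.

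First I would verify that each $\ell \in R^+$ is indeed a long ray in the sense of Section~\ref{section:loop_graphs}: a leaf of a foliation is embedded and simple, and since the foliation $\mathcal{F}^+$ is minimal and filling on a finite type hyperbolic surface, the leaf is recurrent and in particular not proper, so it is not a short ray nor a loop. Passing to a geodesic representative in the conical cover gives a simple geodesic ray from $\tilde{p}$ to a well-defined point on $\partial\tilde{S}$. Next, since distinct leaves of the (non-singular part of the) foliation are disjoint, the elements of $R^+$ are mutually disjoint, so $R^+$ is a clique in $\RGC(S;p)$.

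The main step is to show that $R^+$ lies in a component of $\RGC(S;p)$ disjoint from every loop and every short ray, i.e.\ that each $\ell \in R^+$ is high-filling. Because $\mathcal{F}^+$ is a filling foliation, every essential simple closed curve and every simple proper arc intersects the support of $\mathcal{F}^+$ transversely in an essential way. Applied to a loop or short ray $\alpha$ at $p$, minimality of the lamination implies that $\alpha$ crosses every leaf of $\mathcal{F}^+$, including those in $R^+$. By Lemma~\ref{lemma:no_k_filling} there are no $k$-filling rays for $k>2$, so it suffices to rule out $\ell$ being ray-filling, loop-filling, or $2$-filling. The same minimality/density argument rules out the existence of a disjoint loop $\alpha$ or disjoint ray at distance $1$ or $2$: any candidate for such a neighbor would give a simple arc or loop avoiding the (dense) leaves of $\mathcal{F}^+$ in a nontrivial region, contradicting that $\mathcal{F}^+$ is filling on a finite-type surface. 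Hence $\ell$ is high-filling and $R^+$ is a genuine clique of high-filling rays.

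Finally, since $h^k$ fixes $R^+$ pointwise and $h$ acts loxodromically on $L(S;p)$, Theorem~\ref{theorem:finite cliques} gives exactly two cliques of high-filling rays fixed by $h$, namely $\CC^+(h)$ and $\CC^-(h)$. Therefore $R^+ \in \{\CC^+(h),\CC^-(h)\}$, and similarly $R^- \in \{\CC^+(h),\CC^-(h)\}$. To distinguish attractor from repeller, I would invoke the standard pseudo-Anosov dynamics: for any essential loop $\alpha$ at $p$, the iterates $h^n(\alpha)$ converge (in a train-track neighborhood, or simply in the projective measured foliation sense) to $\mathcal{F}^+$; near $p$ the germ of $h^n(\alpha)$ converges to the germs of the leaves in $R^+$, which on the conical boundary translates to $h^n(\alpha) \to R^+$. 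By the Morse--Smale dynamics on $S^1$ given in Theorem~\ref{theorem:finite cliques}, $h^n(\alpha)$ converges to $\CC^+(h)$. Hence $R^+ = \CC^+(h)$, and symmetrically $R^- = \CC^-(h)$. The only delicate point is the high-fillingness argument, where one must use both the filling property and the minimality of $\mathcal{F}^+$ to preclude any intermediate-distance neighbor of $\ell$ in $\RGC$.
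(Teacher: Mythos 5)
Your overall strategy is different from the paper's, and it has a genuine gap at the final step. You establish (modulo the density/filling arguments, which are sketched but in the right spirit) that the set $R^+$ of leaves of $\mathcal{F}^+$ ending at $p$ is a finite, $h$-invariant set of mutually disjoint high-filling rays. From this, Lemma~\ref{lemma:cliques} and the loxodromicity of $h$ give you that the connected component of $\RGC(S;p)$ containing $R^+$ is one of the two invariant cliques, i.e.\ $R^+ \subseteq \CC^+(h)$ (after sorting out attracting versus repelling). But the lemma asserts an \emph{equality} of sets, and you never prove the reverse inclusion $\CC^+(h) \subseteq R^+$: nothing in your argument rules out the existence of a high-filling ray that is disjoint from every prong of $\mathcal{F}^+$ at $p$ without being one of them (for instance, a simple geodesic ray from $p$ that stays in the complementary region of the straightened lamination and spirals onto a boundary leaf is a natural candidate that must be excluded). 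Writing ``$R^+ \in \{\CC^+(h),\CC^-(h)\}$'' conflates ``$R^+$ is a clique fixed by $h$'' with ``$R^+$ is a \emph{maximal} clique,'' and maximality is exactly what is missing. Note also that you cannot appeal to the weight computation $w(h)=n$ to close this, since that computation (Theorem~\ref{theo:weights}) relies on this lemma.

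The paper's proof takes a route that handles both inclusions simultaneously: it works entirely on the boundary circle of the conical cover. By Theorem~\ref{theorem:finite cliques}, the fixed points of (a power of) $h$ on $\partial\tilde{S}$ are \emph{exactly} the elements of $\CC^+(h)\cup\CC^-(h)$; by classical pseudo-Anosov theory, the fixed points of the lift of $h$ to the conical cover are exactly the endpoints of the lifted leaves of $\mathcal{F}^\pm$ through $\tilde{p}$, with the attracting ones corresponding to $\mathcal{F}^+$. Equating these two descriptions of the same fixed-point set yields the lemma with no need to verify directly that the prongs are high-filling or that the clique contains nothing else. If you want to keep your more hands-on approach, you need to add an argument that any simple long ray disjoint from all the prongs of $\mathcal{F}^+$ at $p$ is itself such a prong (or else fails to be high-filling); as written, the proof only establishes containment in one direction. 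A small additional point: you say you will ``rule out $\ell$ being ray-filling,'' whereas ray-fillingness is part of the definition of high-filling and must be \emph{established}, not excluded; the subsequent sentences make clear what you mean, but the statement should be corrected.
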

\begin{proof}
Consider a conical cover of $S$. Lift $\mathcal{F}^+$ and $\mathcal{F}^-$ to
the conical cover. The fixed points of the action of $h$ on the boundary
of the conical cover are exactly the elements of $\CC^+(h)$ and $\CC^-(h)$.
Moreover, $\CC^+(h)$ is attractive and $\CC^-(h)$ is repulsive.
\end{proof}

In this setting, the minimal filling laminations $\Lambda^+$ and $\Lambda^-$
associated to $h$ can be defined by:
$$\Lambda^\pm = clos(\lambda^\pm)-\lambda^\pm$$
for any $\lambda^\pm \in \CC^\pm(h)$, where $clos(\lambda)$ is the closure of $\lambda$ in $S$.

We could describe the boundary of the loop graph of any orientable surface
(e.g. of infinite type) $S$ with a preferred isolated puncture $p$ with
laminations in the same way: this boundary is exactly the set of distinct
laminations of the form $clos(\lambda)-\lambda$ for some high-filling ray
$\lambda$ in $S$.  It would be interesting to explore these objects.  However, there
are two main difficulties.  First, we don't have any description of the
laminations that appear in this way other than the definition above
induced by the high-filling rays.  In particular, we don't have a good
notion of what ``minimal and filling'' means.
Secondly, we use the action of $\Mod(S;p)$ on the boundary of the conical
cover, where each (high-filling) ray has a unique lift: if we want to lift
laminations that do not necessarily end at the puncture $p$, we loose
the ability to choose a preferred lift.

\section{Embeddings of surfaces, loop graphs and boundaries}
\label{section:subsurfaces}
This section studies the embeddings and projections of loop graphs
via subsurface projections (Sections~\ref{section:subsurfaces_1}
and~\ref{section:subsurfaces_2}) and an application of these maps
(Section~\ref{section:subsurfaces_3}).
The main ideas of Sections~\ref{section:subsurfaces_1}
and~\ref{section:subsurfaces_2} are already in \cite{Masur-Minsky,Schleimer,Aramayona-F-P}.  Our main purpose is to produce 
examples of loxodromic actions on infinite type surfaces by embedding
actions from finite type surfaces.  In some sense, these are the least
interesting cases, since when studying infinite type surfaces we
want to explore actions which are intrinsically infinite type and do
\emph{not} come from finite type dynamics.  See~\cite{Juliette}
for an example.  However, finite type actions are easier to construct;
in this section, we use them to show that the set of weights of elements
of $\Mod(S;p)$ is unbounded for infinite type $S$.

\subsection{Loops: embedding and projection}
\label{section:subsurfaces_1}

\begin{definition}
Let $S$ and $S'$ be two surfaces with isolated marked punctures $p$ and $p'$.  We say that $(S;p)$ is \emph{essentially embedded in $(S';p')$} if there exists an embedding $\phi:S\rightarrow S'$ such that:
\begin{itemize}
\item $p$ is mapped to $p'$.
\item If $q$ is another puncture of $S$, either $q$ is mapped to a puncture of $S'$, or for any curve $\gamma$ around $q$, $\phi(\gamma)$ is essential in $S'$.
\end{itemize}
\end{definition}
That is, an essential embedding is allowed to ``open up'' a puncture,
as long as it goes around an essential loop.

Let $S$ be a finite type surface with an isolated marked puncture $p$. Let $S'$ be
a surface (of finite or infinite type) with marked puncture $p'$, and
suppose that there exists an essential embedding
$\phi : (S;p) \rightarrow (S';p')$.
We will define a quasi-isometric embedding $\phi^\rightarrow$ from
$\bar\L(S;p)$ to $\bar\L(S',p')$ and a $3$-Lipschitz projection
$\phi^\leftarrow$ from $\bar\L(S',p')$ to $\bar\L(S;p)$.  Here $\bar\L(S;p)$ is the loop graph together with its Gromov boundary, i.e.
it is all loops and long rays.  Because
essential embeddings are allowed to open up punctures, short
rays cause technical difficulties.  Fortunately,
by Theorem~\ref{theorem:completed_components}, the loop graph is quasi-isometric
to the short ray and loop graph, so we can simply ignore the short rays.

\begin{definition}
If $\gamma \in S$ is a loop or ray in $S$ based at $p$, then
$\phi(\gamma)$ is a loop or ray in $S'$.  This induces the map
$\phi^\rightarrow: \bar\L(S;p) \to \bar\L(S;p)$.
\end{definition}

\begin{definition}
Define the projection
$\phi^\leftarrow:\bar\L(S';p') \to \bar\L(S;p)$ as follows.
We choose a geodesic representative for every boundary
component of $\phi(S)$ (the punctures which have been opened up).  We denote this set of curves by $\Gamma$.
Given $\alpha \in \bar\L(S';p')$, we choose a geodesic representative
of $\alpha$.  If $\alpha$ is included in $\phi(S)$,
then set $\phi^\leftarrow(\alpha) = \alpha$, understood to be
a ray or loop in $S$.  Otherwise, 
$\alpha$ has some first intersection $x_1$ with $\Gamma$,
which is on a curve $\gamma$.  Define
$\phi^\leftarrow(\alpha)$ to be the union of the
segments $(p', x_1)_\alpha \cup (x_1,x_1)_\gamma \cup (x_1, p')_\alpha$.  Since the boundaries of $S$ are not homotopically
trivial, $\phi^\leftarrow(\alpha)$ is indeed an essential
embedded loop in $S$.
\end{definition}

\begin{lemma}\label{lemma:proj_is_proj}
The map $\phi^\leftarrow \circ \phi^\rightarrow$,
is the identity, and $\phi^\rightarrow \circ \phi^\leftarrow$ is
idempotent.
\end{lemma}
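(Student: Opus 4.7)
The plan is to verify the first statement $\phi^\leftarrow \circ \phi^\rightarrow = \mathrm{id}$ directly from the definitions, and then to deduce the idempotency of $\phi^\rightarrow \circ \phi^\leftarrow$ as an immediate algebraic consequence. The key set-up is to equip $S'$ with a hyperbolic metric in which the boundary curves $\Gamma$ of $\phi(S)$ (the geodesic representatives of the loops around opened-up punctures) are actually geodesic; such a metric exists by the existence of a Fuchsian uniformization in which any prescribed multicurve is geodesic. We further arrange that the metric on $\phi(S)$ agrees (via $\phi$) with a chosen hyperbolic metric on $S$ of the type guaranteed by Theorem~\ref{thm:hyperbolic}, so that $\phi$ sends geodesic loops and rays in $S$ based at $p$ to geodesic arcs in $\phi(S)$ based at $p'$.

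For the first statement, let $\gamma \in \bar\L(S;p)$ be a loop or a long ray, taken in its geodesic representative. Then $\phi(\gamma)$ is a simple arc in $S'$ contained in the open subsurface $\phi(S)$. To identify $\phi^\leftarrow(\phi^\rightarrow(\gamma))$, I would show that the geodesic representative of $\phi(\gamma)$ in the metric on $S'$ also lies in $\phi(S)$, and therefore does not meet $\Gamma$. This relies on the standard fact that two simple geodesics in a hyperbolic surface realize the minimal geometric intersection number in their isotopy classes: since $\phi(\gamma)$ is isotopic to a curve (itself) disjoint from the geodesic multicurve $\Gamma$, its geodesic representative is still disjoint from $\Gamma$, and hence is contained in the component $\phi(S)$ of the complement. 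By the definition of $\phi^\leftarrow$, this gives $\phi^\leftarrow(\phi(\gamma)) = \phi(\gamma)$, understood as a loop or ray in $S$ via $\phi^{-1}$; that is exactly $\gamma$.

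For long rays one needs an additional word: a long ray is characterized by its endpoint on the boundary of the conical cover of $S$ at $p$. The essential embedding $\phi$ induces a map between the conical covers that sends the distinguished lift of $\gamma$ to the distinguished lift of $\phi(\gamma)$, and the geodesic-minimization argument above applies to the lifts of $\Gamma$ in the conical cover of $S'$, so the geodesic representative of $\phi(\gamma)$ (in the conical-cover sense) remains in $\phi(S)$; the retraction $\phi^\leftarrow$ again recovers $\gamma$.

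Once $\phi^\leftarrow \circ \phi^\rightarrow = \mathrm{id}$ is established, the idempotency of $\phi^\rightarrow \circ \phi^\leftarrow$ is formal:
\[
(\phi^\rightarrow \circ \phi^\leftarrow) \circ (\phi^\rightarrow \circ \phi^\leftarrow) \;=\; \phi^\rightarrow \circ (\phi^\leftarrow \circ \phi^\rightarrow) \circ \phi^\leftarrow \;=\; \phi^\rightarrow \circ \phi^\leftarrow .
\]
The main obstacle in this plan is the geodesic-representative issue: one must verify that the geodesic representative of $\phi(\gamma)$ in $S'$ truly lies in $\phi(S)$ and, in the case of a long ray, that the definition of $\phi^\leftarrow$ on an element with geodesic representative lying inside $\phi(S)$ really returns the corresponding object of $\bar\L(S;p)$. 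Everything else in the proof is bookkeeping.
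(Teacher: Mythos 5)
Your proposal is correct and follows essentially the same route as the paper: observe that $\phi^\rightarrow(\alpha)$ lies in $\phi(S)$, so the first case of the definition of $\phi^\leftarrow$ returns $\alpha$, and idempotency is then the formal computation you give. The only difference is that you carefully justify the step the paper leaves implicit — that the \emph{geodesic} representative of $\phi^\rightarrow(\alpha)$ in $S'$ still avoids the geodesic multicurve $\Gamma$ and hence stays in $\phi(S)$ — which is a legitimate and correctly handled refinement, not a different argument.
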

\begin{proof}
For any $\alpha \in \bar\L(S;p)$, the curve $\phi^\rightarrow(\alpha)$
is included in $\phi(S)$, so by definition $\phi^\leftarrow(\phi^\rightarrow(\alpha)) = \alpha$.
This also implies that $\phi^\rightarrow \circ \phi^\leftarrow$ is
idempotent, because $\phi^\rightarrow \circ (\phi^\leftarrow \circ 
\phi^\rightarrow) \circ \phi^\leftarrow = \phi^\rightarrow \circ \phi^\leftarrow$.
\end{proof}

\begin{lemma}\label{lemma:lipschitz}
The map $\phi^\leftarrow$
is $3$-Lipschitz.
\end{lemma}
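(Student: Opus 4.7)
The plan is to verify the Lipschitz bound edge-by-edge: given disjoint $\alpha, \beta \in \bar\L(S';p')$, I will construct a path of length at most $3$ between $\phi^\leftarrow(\alpha)$ and $\phi^\leftarrow(\beta)$ in $\bar\L(S;p)$ via case analysis on how $\alpha, \beta$ meet the boundary curves $\Gamma = \partial\phi(S)$.

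The easy cases yield distance $1$ outright. If both $\alpha, \beta$ lie in $\phi(S)$, then $\phi^\leftarrow$ acts via $\phi^{-1}$ and preserves disjointness. If only one, say $\alpha$, lies in $\phi(S)$, then $\alpha$ is disjoint both from $\beta$ and from $\Gamma$, hence disjoint from $\phi^\leftarrow(\beta)$, since this surgery loop is assembled from arcs of $\beta$ together with a closed curve in $\Gamma$. If both $\alpha, \beta$ cross $\Gamma$ first on distinct components $\gamma_\alpha \ne \gamma_\beta$, then the initial segments are disjoint subarcs of disjoint curves, the two wraparound closed curves lie on disjoint components of $\Gamma$, and each initial segment meets the other's $\Gamma$-component (if at all) only at its own distinguished endpoint; assembled, the two surgery loops are disjoint.

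The substantive case is when both $\alpha$ and $\beta$ first meet $\Gamma$ on the same component $\gamma$, corresponding to a puncture $q$ of $S$. Let $a, b$ be the initial segments, which in $S$ (via $\phi^{-1}$) are disjoint simple arcs from the puncture $p$ to $q$; the projections $\alpha', \beta'$ are peripheral loops at $p$ around $q$, realized as boundaries of thin regular neighborhoods of $a \cup \{q\}$ and $b \cup \{q\}$ in the compactification $\hat S$. These loops generically intersect, so the plan is to produce an intermediate simple loop $\ell$ at $p$ in $S$ that is both disjoint from $a \cup b$ and non-peripheral around $q$; taking the regular neighborhoods sufficiently thin then makes $\ell$ disjoint from both $\alpha'$ and $\beta'$, yielding $d(\alpha', \beta') \le 2$. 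Cutting $S$ along the simple closed curve $a \cup b \subset \hat S$ produces one or two pieces whose Euler characteristics sum to $\chi(S) + 2 \le -1$ (by Mayer--Vietoris against the two open arcs of $a \cup b$ in $S$, which contribute $\chi = 2$). The only piece that does not support a non-peripheral-to-$q$ simple loop at the corner $p$ is a topological disk containing only $p, q$ on its boundary (Euler characteristic $1$), and two such disks would force $\chi(S) = 0$, contradicting the hypothesis $\chi(S) < -2$. So at least one piece carries an additional puncture or a handle, and a simple loop at $p$ winding around that feature provides the required $\ell$.

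The principal obstacle is this last step: a loop merely disjoint from $a \cup b$ is insufficient, since any peripheral-to-$q$ loop at $p$ would intersect $\alpha'$ and $\beta'$. The non-peripherality condition is what forces the Euler characteristic count and makes essential use of the standing complexity hypothesis $\chi(S) < -2$; once $\ell$ is constructed, the $2$-Lipschitz (hence $3$-Lipschitz) bound follows immediately.
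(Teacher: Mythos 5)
Your proof is correct, and in the one substantive case it takes a genuinely different route from the paper. Both arguments dispose of the easy cases (one or both curves inside $\phi(S)$, or first crossings on distinct components of $\Gamma$) by direct disjointness, exactly as you do. The divergence is in the case where $\alpha$ and $\beta$ first meet the same component $\gamma$: the paper observes that after perturbing the two surgered loops off the shared copy of $\gamma$, they intersect in at most two points, and then invokes the unicorn-path machinery to conclude $d \le 3$; you instead build an explicit intermediate loop $\ell$ disjoint from both, via the Euler characteristic count on the complement of $a \cup b$, obtaining the sharper bound $d \le 2$. The trade-off: the paper's argument is shorter and needs no complexity hypothesis (two loops meeting twice are always close), whereas yours is self-contained --- no appeal to unicorn paths --- and yields a $2$-Lipschitz map, but it genuinely uses the standing assumption on $\chi(S)$ to rule out the complement of $a\cup b$ being a union of disks. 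One small remark on your final paragraph: the non-peripherality of $\ell$ around $q$ is automatic rather than an extra condition to impose, since $a$ and $b$ both terminate at $q$, so no loop disjoint from $a \cup b$ can encircle $q$ alone; the real content of the Euler characteristic count is simply that some complementary piece supports an essential loop at $p$ at all, which is exactly what you verify.
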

\begin{proof}
Let $\alpha,\beta \in \bar\L(S';p')$ be disjoint.  Intersections
between 
$\phi^\leftarrow(\alpha)$ and
$\phi^\leftarrow(\beta)$ can only occur if $\alpha$ and
$\beta$ intersect $\Gamma$ and also have their first
intersection with $\Gamma$ on the same component, 
and at most two intersections can be created in this way.
In this case, the images
$\phi^\leftarrow(\alpha), \phi^\leftarrow(\beta)$ are loops.
By the construction of unicorn paths, then,
$d(\phi^\leftarrow(\alpha), \phi^\leftarrow(\beta)) \le 3$.
If $\alpha,\beta$ are at distance $n$, we can repeat this
argument on every pair of consecutive loops in a path
between $\alpha$ and $\beta$ to prove the lemma.
\end{proof}

\begin{lemma}\label{lemma:cliques_to_cliques}
If $\CC \subseteq \bar \L(S;p)$ is a maximal clique of high-filling rays,
then $\phi^\rightarrow(\CC)$ is a maximal clique of high-filling rays.
\end{lemma}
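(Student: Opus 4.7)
The plan is to verify three properties of the image $\phi^\rightarrow(\CC)$: every element is high-filling in $\bar\L(S';p')$, the image is a clique (which is automatic from the fact that $\phi$ is an embedding together with Lemma~\ref{lemma:cliques}), and this clique is maximal. For the high-filling property, I would argue by contradiction: if $\phi^\rightarrow(\lambda)$ were finitely connected to some loop $\ell'$ in $\RGC(S';p')$ for some $\lambda \in \CC$, then applying $\phi^\leftarrow$, which is $3$-Lipschitz by Lemma~\ref{lemma:lipschitz}, and using $\phi^\leftarrow \circ \phi^\rightarrow = \mathrm{id}$ from Lemma~\ref{lemma:proj_is_proj}, I would obtain a finite path in $\RGC(S;p)$ from $\lambda$ to the loop $\phi^\leftarrow(\ell')$, contradicting that $\lambda$ is high-filling.

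The main work is the maximality. I would take $\mu'$ in the unique maximal clique $\CC'$ containing $\phi^\rightarrow(\CC)$ and show $\mu' \in \phi^\rightarrow(\CC)$, splitting into two cases according to whether the geodesic representative of $\mu'$ remains inside $\phi(S)$ or crosses the boundary curves $\Gamma$ of $\phi(S)$.

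In the first case, $\mu'$ exits through $\Gamma$, so by construction $\phi^\leftarrow(\mu')$ is an essential loop in $\bar\L(S;p)$ formed by an initial arc of $\mu'$ together with a boundary component $\gamma \in \Gamma$. For every $\lambda \in \CC$, I would check that this loop is isotopically disjoint from $\lambda$: the initial arc of $\mu'$ is disjoint from $\phi(\lambda)$ because $\mu'$ and $\phi^\rightarrow(\lambda)$ share the clique $\CC'$, and $\gamma$ is disjoint from $\phi(\lambda)$ because $\phi(\lambda)$ lies in the interior of $\phi(S)$. Pulling back through $\phi^{-1}$ then produces a loop in $S$ disjoint from every element of $\CC$, contradicting that $\CC$ consists of high-filling rays.

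In the second case, $\mu'$ stays in $\phi(S)$, so $\mu'_S := \phi^{-1}(\mu')$ is a simple ray in $S$ from $p$, which I would verify lies in $\bar \L(S;p)$. A Lipschitz argument mirroring the first paragraph shows $\mu'_S$ is high-filling: otherwise $\phi^\rightarrow$, which is $1$-Lipschitz because embeddings preserve disjointness, would carry a finite path between $\mu'_S$ and some loop to a finite path between $\mu' = \phi^\rightarrow(\mu'_S)$ and the image loop, contradicting that $\mu'$ is high-filling. Finally, $\mu'_S$ is disjoint from each $\lambda \in \CC$ by disjointness of $\mu'$ from $\phi^\rightarrow(\lambda)$ and the embedding property, so by maximality of $\CC$ we conclude $\mu'_S \in \CC$ and $\mu' \in \phi^\rightarrow(\CC)$. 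I expect the main obstacle to be the careful isotopy check in the first case, ensuring that $\phi^\leftarrow(\mu')$ is essential in $S$ and can be realized disjointly from each geodesic $\lambda$.
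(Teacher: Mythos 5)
Your proposal is correct and follows essentially the same route as the paper: the paper also shows that anything disjoint from $\phi^\rightarrow(\ell)$ must lie in $\phi(S)$ (your ``crosses $\Gamma$'' case, which you handle by explicitly performing the surgery defining $\phi^\leftarrow(\mu')$ rather than just invoking the $3$-Lipschitz bound of Lemma~\ref{lemma:lipschitz}), and then deduces maximality from the maximality of $\CC$ by pulling back (your ``stays in $\phi(S)$'' case). The only point you rightly flag as needing verification --- that a high-filling ray of $S'$ contained in $\phi(S)$ pulls back to a long (not short) ray of $S$ --- is glossed over in the paper as well and does hold, since a high-filling ray cannot merely spiral onto the geodesic boundary $\Gamma$.
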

\begin{proof}
Let us be given $\ell \in \CC$.  We first show that any ray or
loop which is disjoint from $\phi^\rightarrow(\ell)$ in $S'$
is contained in $\phi(S)$.  Towards a contradiction, suppose
the opposite: that we have a ray or loop $\gamma \in S'$,
such that $\gamma$ is disjoint from $\phi^\rightarrow(\ell)$, and
assume that $\gamma \notin \phi(S)$.  Then
by Lemma~\ref{lemma:lipschitz}, there is a path of length
at most $3$ in $\bar\L(S;p)$ between
$\phi^\leftarrow(\gamma)$ and $\phi^\leftarrow(\phi^\rightarrow(\ell)) = \ell$.
Because $\gamma \notin \phi(S)$, we know that $\gamma$
must intersect $\Gamma$, so $\phi^\leftarrow(\gamma)$ is a loop.
Thus we have a path of length $3$ between a loop and a high-filling
ray, which is a contradiction.

We know that $\phi^\rightarrow(\CC)$ is a clique of rays, and we have
just shown that any ray or loop disjoint from any of them is contained
in $\phi(S)$, but $\CC$ is maximal in $\bar\L(S;p)$, so $\phi^\rightarrow(\CC)$ is maximal.
\end{proof}

\begin{lemma} \label{lemma:qi_embed}
The map
$\phi^\rightarrow:\bar\L(S;p) \to \bar\L(S';p')$ is a quasi-isometric
embedding.
\end{lemma}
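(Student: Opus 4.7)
The plan is straightforward: combine the two preceding lemmas. I will show that $\phi^\rightarrow$ is $1$-Lipschitz on one side and use the $3$-Lipschitz projection $\phi^\leftarrow$ together with the identity $\phi^\leftarrow \circ \phi^\rightarrow = \mathrm{id}$ on the other side, which immediately yields the quasi-isometric embedding inequalities.

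First I would check that $\phi^\rightarrow$ is $1$-Lipschitz. Let $\alpha, \beta \in \bar\L(S;p)$ be disjoint (i.e. adjacent or equal in $\bar\L(S;p)$). Since $\phi$ is an embedding of $S$ into $S'$, the images $\phi(\alpha)$ and $\phi(\beta)$ are disjoint simple arcs or loops in $S'$; by the essential embedding hypothesis they remain essential and based at $p'$, so they represent adjacent (or equal) vertices of $\bar\L(S';p')$. Iterating along a geodesic between any two elements of $\bar\L(S;p)$ gives
\[
d_{\bar\L(S';p')}(\phi^\rightarrow(\alpha), \phi^\rightarrow(\beta)) \le d_{\bar\L(S;p)}(\alpha,\beta).
\]

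Next, for the lower bound, apply Lemma~\ref{lemma:lipschitz} and Lemma~\ref{lemma:proj_is_proj} together:
\[
d_{\bar\L(S;p)}(\alpha,\beta) = d_{\bar\L(S;p)}\bigl(\phi^\leftarrow(\phi^\rightarrow(\alpha)), \phi^\leftarrow(\phi^\rightarrow(\beta))\bigr) \le 3\, d_{\bar\L(S';p')}(\phi^\rightarrow(\alpha), \phi^\rightarrow(\beta)).
\]
Combining the two inequalities gives
\[
\tfrac{1}{3}\, d_{\bar\L(S;p)}(\alpha,\beta) \le d_{\bar\L(S';p')}(\phi^\rightarrow(\alpha), \phi^\rightarrow(\beta)) \le d_{\bar\L(S;p)}(\alpha,\beta),
\]
which is precisely the statement that $\phi^\rightarrow$ is a quasi-isometric embedding (with multiplicative constant $3$ and additive constant $0$).

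There is essentially no obstacle here since the two preceding lemmas have done the real work; the only small point to be careful about is the $1$-Lipschitz claim, where one must invoke the essentiality clause of the embedding to ensure that $\phi(\alpha)$ is a genuine vertex of $\bar\L(S';p')$ (rather than a null-homotopic curve) when $\alpha$ is a loop around a puncture that has been opened up.
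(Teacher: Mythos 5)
Your proof is correct and takes essentially the same route as the paper: the upper bound comes from $\phi^\rightarrow$ sending adjacent vertices to adjacent vertices, and the lower bound comes from combining the $3$-Lipschitz property of $\phi^\leftarrow$ (Lemma~\ref{lemma:lipschitz}) with the identity $\phi^\leftarrow\circ\phi^\rightarrow=\mathrm{id}$ (Lemma~\ref{lemma:proj_is_proj}); the paper merely unwinds the Lipschitz bound along a geodesic path instead of quoting it globally. Your closing remark about the essentiality clause guaranteeing that $\phi(\alpha)$ is a genuine vertex is a worthwhile observation that the paper leaves implicit.
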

\begin{proof} 
Clearly we have:
\[
d(\phi^\rightarrow(\alpha),\phi^\rightarrow(\beta)) \leq d(\alpha,\beta)
\]
because any path in $\bar\L(S;p)$ is mapped by $\phi^\rightarrow$
to a path in $\bar\L(S';p')$.
Conversely, let
\[ 
(\phi^\rightarrow(\alpha),
\gamma_1,...,\gamma_k,
\phi^\rightarrow(\beta))
\]
be a geodesic path in $\L(S';p')$ between two elements of
$\phi^\rightarrow(\bar\L(S;p))$.  Now apply the projection
$\phi^\leftarrow$ to the path.  By Lemmas~\ref{lemma:lipschitz} and~\ref{lemma:proj_is_proj}, the endpoints map back to $\alpha$
and $\beta$, and consecutive entries map under $\phi^\leftarrow$ to loops
and rays
which are at most distance $3$ in $\bar\L(S;p)$.
Thus we conclude
\[
d(\alpha,\beta) \le
3d(\phi^\rightarrow(\alpha), \phi^\rightarrow(\beta)).
\]
The combination of these inequalities shows that
$\phi^\rightarrow$ is a $(3,0)$-quasi-isometric embedding.
\end{proof}

\subsection{Boundaries: embedding}
\label{section:subsurfaces_2}

By Lemma~\ref{lemma:qi_embed}, it follows
from the definition of the Gromov boundary
that the map $\phi^\rightarrow$ induces an injective
continuous map on the boundaries
$\phi^\rightarrow : \partial L(S;p) \to \partial L(S';p')$.
However, in this section, we show the stronger
statement below.

\begin{lemma}\label{lemma:commutes}
The (injective, continuous) map $\phi^\rightarrow$ commutes with
the homeomorphisms
$F_S: \{\textnormal{cliques in $\bar\L(S;p)$}\} \to \partial L(S;p)$
and
$F_{S'}: \{\textnormal{cliques in $\bar\L(S';p')$}\} \to \partial L(S';p')$
given in Theorem~\ref{theorem:boundary_homeo}.
\end{lemma}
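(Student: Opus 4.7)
The plan is to chase the definitions, reducing the commutation to a statement about cover-convergence. Fix a clique $\mathcal{C}\subseteq \bar\L(S;p)$ of high-filling rays, a ray $\ell\in\mathcal{C}$, and a loop $a$ in $S$. By Theorem~\ref{theorem:boundary_bijection}, $F_S(\mathcal{C})=[P(a,\ell)]$. By Lemma~\ref{lemma:cliques_to_cliques}, $\phi^\rightarrow(\mathcal{C})$ is a maximal clique of high-filling rays in $\bar\L(S';p')$ containing $\phi^\rightarrow(\ell)$, while $\phi^\rightarrow(a)$ is a loop in $S'$, so the same theorem gives $F_{S'}(\phi^\rightarrow(\mathcal{C}))=[P(\phi^\rightarrow(a),\phi^\rightarrow(\ell))]$. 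On the other hand, the boundary map induced by the quasi-isometric embedding $\phi^\rightarrow$ of Lemma~\ref{lemma:qi_embed} sends the class of any sequence $(x_n)$ converging to a boundary point to the class of $(\phi^\rightarrow(x_n))$; applied to the sequence $(x_n)=P(a,\ell)$ this gives $\phi^\rightarrow(F_S(\mathcal{C}))=[\phi^\rightarrow(P(a,\ell))]$ in $\partial L(S';p')$.

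The task therefore reduces to showing that the two sequences $\phi^\rightarrow(P(a,\ell))$ and $P(\phi^\rightarrow(a),\phi^\rightarrow(\ell))$ determine the same boundary point of $L(S';p')$. For this I would appeal to cover-convergence in $S'$. The generalization to $(S';p')$ of Lemma~3.4.1 of \cite{boundary} (recalled in Section~\ref{section:unicorn_paths}) gives that $P(\phi^\rightarrow(a),\phi^\rightarrow(\ell))$ cover-converges to $\phi^\rightarrow(\ell)$ in $S'$, and the same lemma in $S$ gives that $P(a,\ell)$ cover-converges to $\ell$ in $S$. To transport this across $\phi$, observe that since $\phi$ is an essential embedding sending $p$ to $p'$, it sends the peripheral loop at $p$ to a curve homotopic to the peripheral loop at $p'$, and hence induces a continuous, equivariant lift $\tilde\phi\colon \tilde S\to\widetilde{S'}$ between conical covers taking $\tilde p$ to $\tilde{p'}$. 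This lifted map extends continuously to the boundary circles and sends the endpoint of the preferred lift in $\tilde S$ of any loop or ray based at $p$ to the endpoint of the preferred lift of its image in $\widetilde{S'}$. It follows that $\phi^\rightarrow(P(a,\ell))$ cover-converges to $\phi^\rightarrow(\ell)$ in $S'$.

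To finish, I would invoke the generalization to $(S';p')$ of Lemma~5.3.2 of~\cite{boundary}: any sequence of loops in $S'$ cover-converging to a high-filling ray $\lambda$ converges in $\partial L(S';p')$ to $F_{S'}([\lambda])$. Applied to our two sequences (both of which cover-converge to the high-filling ray $\phi^\rightarrow(\ell)$, whose clique is $\phi^\rightarrow(\mathcal{C})$), this yields
\[
[\phi^\rightarrow(P(a,\ell))] \;=\; F_{S'}(\phi^\rightarrow(\mathcal{C})) \;=\; [P(\phi^\rightarrow(a),\phi^\rightarrow(\ell))],
\]
and combining with the identities from the first paragraph produces $\phi^\rightarrow(F_S(\mathcal{C}))=F_{S'}(\phi^\rightarrow(\mathcal{C}))$. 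The main obstacle I expect is the construction and analysis of the lifted map $\tilde\phi$: one must verify carefully that an essential embedding (which is only required to send $p$ to $p'$ and to send other punctures to essential curves) genuinely descends to a continuous map of conical covers that extends to the boundary circle in such a way that preferred lifts go to preferred lifts, so that cover-convergence in $S$ forces cover-convergence in $S'$.
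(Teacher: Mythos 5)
Your first paragraph matches the paper's setup exactly: both reduce the lemma to the identity $[\phi^\rightarrow(P(a,\ell))]=[P(\phi^\rightarrow(a),\phi^\rightarrow(\ell))]$ in $\partial L(S';p')$. Where you diverge is in how you establish that identity. The paper's proof is a one-line observation: choosing $a$ to be a loop in $\phi(S)$, the entire unicorn path construction takes place inside $\phi(S)$ and is purely combinatorial (concatenations of subarcs of $a$ and $\ell$ determined by their intersection pattern), so it commutes with the embedding \emph{term by term} --- $\phi^\rightarrow(P(a,\ell))$ and $P(\phi^\rightarrow(a),\phi^\rightarrow(\ell))$ are the \emph{same sequence}, and there is nothing asymptotic to compare. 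You instead try to show the two (a priori different) sequences represent the same boundary point by proving both cover-converge to $\phi^\rightarrow(\ell)$ in $S'$ and then invoking a convergence criterion.

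The weak link in your route is exactly the one you flag: transporting cover-convergence from $S$ to $S'$ requires a map $\tilde\phi$ between conical covers that extends to the boundary circles and sends endpoints of preferred lifts to endpoints of preferred lifts. The covering-space part is fine (the component of the preimage of $\phi(S)$ in the conical cover of $S'$ containing the cusp at $\tilde p'$ is a copy of the conical cover of $S$, since $\phi_*$ carries the peripheral parabolic at $p$ to the one at $p'$), but the boundary extension is not automatic: cover-convergence is detected via $k$-beginning with respect to the \emph{equator}, and the equators of $S$ and $S'$ are built from different hyperbolic structures with no a priori relation under $\phi$. So as written, the crucial step of your argument is asserted rather than proved, and proving it would cost considerably more than the whole lemma is worth. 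This is not a wrong approach --- it would very likely go through --- but you should recognize that the naturality of the unicorn construction lets you bypass the conical cover entirely; that is the content of the paper's proof, and it is the efficient move here.
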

\begin{proof}
The map $F_S$ can be defined by, given a clique $\CC$
in $\bar\L(S;p)$, the image $F_S(\CC)$ (in the Gromov boundary
of $\L(S;p)$) is represented by the equivalence class of the 
unicorn path $P(a,\ell)$ for any loop $a$ and any ray $\ell \in \CC$.
If we take $a$ to be a loop in $\phi(S)$,
then evidently the entire unicorn path construction is mapped
via $\phi^\rightarrow$ to a unicorn path $P(\phi^\rightarrow(a),
\phi^\rightarrow(\ell))$, whose equivalence class represents
the image $F_{S'}(\phi^\rightarrow(\CC))$.  That is, the lemma follows
from the fact that the unicorn path construction commutes with 
$\phi^\rightarrow$.
\end{proof}

\subsection{Weights}
\label{section:subsurfaces_3}

\begin{theorem}\label{theo:weights}
Let $S$ be an infinite type surface with an isolated marked puncture $p$. For any $n\in \N$,
there exist elements of $\Mod(S;p)$ with a loxodromic action on the loop graph
$\L(S;p)$ whose weight is equal to $n$.
\end{theorem}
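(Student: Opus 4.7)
The plan is to realize elements of arbitrary weight in $\Mod(S;p)$ by transporting loxodromic elements from finite type subsurfaces into $S$, as suggested in the authors' remark immediately preceding the statement. Concretely, I would take a pseudo-Anosov of weight $n$ on a suitable finite type surface $(S_n;p_n)$, essentially embed $(S_n;p_n)$ into $(S;p)$, extend by the identity on the complement, and then invoke the quasi-isometric embedding results of Section~\ref{section:subsurfaces} together with Theorem~\ref{theorem:finite cliques} to verify that the extension is loxodromic of weight exactly $n$.

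For the finite type input, for each positive integer $n$ I would exhibit a finite type surface $S_n$ with an isolated marked puncture $p_n$ carrying a pseudo-Anosov $h_n \in \Mod(S_n;p_n)$ whose invariant foliations have an $n$-prong singularity at $p_n$. Such pseudo-Anosovs exist once the underlying stratum of quadratic differentials is nonempty; one can produce them concretely via Thurston's construction from two multicurves chosen so that the resulting foliation has the prescribed singularity type at $p_n$, or by taking an $n$-fold cyclic branched cover of a pseudo-Anosov on a simpler surface ramified over $p_n$. By the discussion preceding the statement, $\CC^{\pm}(h_n)$ is exactly the set of leaves of the attracting (resp.\ repelling) foliation of $h_n$ ending at $p_n$, and hence $w(h_n) = n$.

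Since $S$ has infinite type, it contains room for any compact subsurface, so we may choose an essential embedding $\phi:(S_n;p_n) \hookrightarrow (S;p)$ in the sense of Section~\ref{section:subsurfaces_1}. Let $h \in \Mod(S;p)$ be represented by the homeomorphism acting as $h_n$ on $\phi(S_n)$ and as the identity elsewhere. The relation $h \circ \phi = \phi \circ h_n$ on $S_n$ yields $h \circ \phi^{\rightarrow} = \phi^{\rightarrow} \circ h_n$ on loops and long rays. By Lemma~\ref{lemma:qi_embed}, $\phi^{\rightarrow}$ is a quasi-isometric embedding, and the intertwining shows the $h$-orbit of any $\phi^{\rightarrow}(\alpha)$ is the $\phi^{\rightarrow}$-image of an $h_n$-orbit in $\L(S_n;p_n)$, hence has positive linear growth in $\L(S;p)$; thus $h$ is loxodromic. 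By Lemma~\ref{lemma:cliques_to_cliques}, $\phi^{\rightarrow}(\CC^{\pm}(h_n))$ are maximal cliques of high-filling rays of cardinality $n$ in $\bar\L(S;p)$; they are distinct (images of distinct cliques under an injective map) and $h$-invariant (by the intertwining). By Theorem~\ref{theorem:finite cliques}, a loxodromic element fixes exactly two cliques on the Gromov boundary, so $\CC^{\pm}(h) = \phi^{\rightarrow}(\CC^{\pm}(h_n))$, giving $w(h) = n$.

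The main obstacle I anticipate is the finite type construction in the second paragraph: producing, for every $n \geq 1$, a pseudo-Anosov realizing exactly an $n$-prong at the chosen puncture. One must navigate the Euler--Poincar\'e constraint on the singularity profile of the invariant foliation and then appeal to a nonemptiness result for the relevant stratum, or give an explicit branched cover/Thurston construction. Once those examples are in hand, the remainder of the argument is essentially bookkeeping provided by the machinery of Section~\ref{section:subsurfaces}.
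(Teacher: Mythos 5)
Your proposal follows essentially the same route as the paper: embed a finite type subsurface carrying a pseudo-Anosov with an $n$-prong singularity at the marked puncture, extend (a suitable power of) it by the identity, and use the machinery of Section~\ref{section:subsurfaces} together with Theorem~\ref{theorem:finite cliques} to conclude that the extension is loxodromic of weight $n$. The paper likewise takes the existence of such $n$-pronged pseudo-Anosovs on finite type surfaces for granted, and it verifies loxodromicity via the projection $\phi^\leftarrow$ (showing any fixed loop or ray must lie in the subsurface) rather than via your translation-length argument through the quasi-isometric embedding $\phi^\rightarrow$, but these are minor variations of the same proof.
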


\begin{proof}
(The notations $S$ and $S'$ are reversed here; now $S'$ is the subsurface).
Let $S'$ be an essential subsurface of $S$ with enough complexity so that there
exists a pseudo-Anosov $h \in \mathrm{Homeo}(S')$ whose preserved foliations
have an $n$-prong singularity on $p$. Choose $k$ so that $h^k$ fixes the
boundary components and punctures of $S'$. Extend $h^k$ on $S$ by the identity outside $S'$.

We claim that if $h$ fixes a loop or ray, then that loop or ray is contained
in $\phi(S')$: because $h$ fixes the boundaries of $\phi(S)$,
 if $\gamma$ is a fixed ray or loop in $S$ not contained in $S'$,
then since $h$ fixes the boundaries of $\phi(S')$, by the definition of
$\phi^\leftarrow$, we have $\phi^\leftarrow(\gamma)$ is a loop in 
$\bar\L(S';p)$ which is fixed by $h$.  This is not possible, so we conclude
that any $\gamma$ fixed by $h$ has to be contained in $\phi(S')$
(and is thus a high-filling ray in $S'$).

We already know what the fixed points of $h$ are as high-filling rays:
they are the two $n$-prong foliations of the pseudo-Anosov $h$.
That the action of $h$ commutes with the map $\phi^\leftarrow$ similarly
implies that the limit under powers of $h$ of any ray or loop in
$\bar\L(S;p)$ must be one of these rays.  Thus $h$ is loxodromic and
has weight $n$.
\end{proof}

Theorem~\ref{theo:weights} is possible because we can embed subsurfaces
of arbitrary complexity into an infinite type surface.
The lemma below records the fact that homeomorphisms
on finite type surfaces have bounded weights, which is a corollary
of the Euler-Poincar\'{e}-Hopf formula.

\begin{lemma}\label{lemma:finite_type}
Let $S$ be a finite type surface with an isolated marked puncture $p$.
There is a uniform bound on $w(h)$ for $h \in \Mod(S;p)$.
\end{lemma}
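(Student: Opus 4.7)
The plan is to reduce to the pseudo-Anosov case, where the description of $\CC^\pm(h)$ from Section~\ref{section:action_lox} (``Links between cliques and laminations'') together with the Euler--Poincar\'e--Hopf formula for measured foliations gives the desired bound.

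First, let $h \in \Mod(S;p)$ act loxodromically on $\L(S;p)$. Since $S$ is of finite type, I would apply the Nielsen--Thurston classification: some power $h^k$ preserves a (possibly empty) essential multicurve $\Gamma$ and restricts on each complementary component to either the identity or a pseudo-Anosov. Let $S' \subseteq S$ be the component containing $p$. If $h^k|_{S'}$ were the identity, then every simple loop at $p$ supported in $S'$ would be fixed by $h^k$, contradicting that $h^k$ (like $h$) is loxodromic on $\L(S;p)$. Likewise, if $S'$ were an annular piece around $p$, there would be fixed loops. Hence $h^k|_{S'}$ is pseudo-Anosov on a subsurface carrying $p$ as a puncture.

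Next, the essential inclusion $\phi \colon S' \hookrightarrow S$ of Section~\ref{section:subsurfaces_1} induces a quasi-isometric embedding $\phi^\rightarrow \colon \bar\L(S';p) \hookrightarrow \bar\L(S;p)$ which takes maximal cliques of high-filling rays to maximal cliques (Lemma~\ref{lemma:cliques_to_cliques}) and commutes with the boundary homeomorphism $F$ (Lemma~\ref{lemma:commutes}). Because $h^k$ is supported on the image of $S'$ (up to isotopy), the two cliques $\CC^\pm(h) = \CC^\pm(h^k)$ are the $\phi^\rightarrow$-images of the corresponding cliques for the pseudo-Anosov $h^k|_{S'}$ acting on $\L(S';p)$. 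Thus $w(h) = w(h^k|_{S'})$, and by the ``Links between cliques and laminations'' discussion this is exactly the number of prongs at $p$ of the stable (equivalently, unstable) measured foliation of $h^k|_{S'}$.

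Finally, the Euler--Poincar\'e--Hopf formula for a measured foliation on a finite type surface $S'$ gives
\[
\sum_i (2 - k_i) \;=\; 2\chi(S'),
\]
where the sum runs over interior singularities and punctures with respective prong counts $k_i$. Each interior singularity has $k_i \geq 3$ and contributes a non-positive term; each puncture contributes at most $1$. Isolating the term at $p$ and using that both $|\chi(S')|$ and the total number of singularities are bounded by functions of $|\chi(S)|$, one obtains $k_p \leq C(\chi(S))$. Since $|S'|$ ranges over only finitely many topological types as a subsurface of the finite-type $S$, this bound is uniform, proving the lemma. The main obstacle is the reduction in the first paragraph, namely carefully ruling out reducible behavior in which $p$ sits on a trivially-acting piece; everything afterwards is a bookkeeping exercise combining the already-established embedding theory with the classical Euler--Poincar\'e--Hopf count.
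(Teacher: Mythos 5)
Your proof is correct and rests on the same key ingredient as the paper's: the Euler--Poincar\'e--Hopf formula bounds the number of prongs of the invariant foliation at $p$, and that number is the weight. Your write-up is in fact more complete than the paper's, which simply asserts that the cliques of a pseudo-Anosov are the foliation leaves at $p$ and applies the formula, leaving implicit the Nielsen--Thurston reduction to a pseudo-Anosov piece containing $p$ and the transfer of weights under the subsurface embedding (your first two paragraphs, via Lemmas~\ref{lemma:cliques_to_cliques} and~\ref{lemma:commutes}).
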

\begin{proof}
As discussed previously, the cliques of a pseudo-Anosov element
in a finite type surface are just the leaves of the singular foliation
emanating from the marked puncture.
The Euler-Poincar\'{e}-Hopf formula states that
$2-2g = \sum_i (1-d(s_i)/2)$,
where the $s_i$ are the singular points of the foliation,
and $d(s_i)$ is the number of prongs.  A single
prong (which is the only positive contribution to the
right hand side) can only occur at a puncture, so because the number
of punctures is bounded, there is a uniform bound on 
the maximum $d(s_i)$ which can occur.  That is, there
is a uniform bound on the weight of a loxodromic element in $\Mod(S;p)$.
See~\cite{band_boyland}, Section 2.3.
\end{proof}

\begin{remark}
The loxodromic element $h$ defined in \cite{Juliette}, Section 4.1,
gives an example of a loxodromic element of weight $1$ which does not
preserve any finite type subsurface. It would be interesting to construct
examples of loxodromic elements of weight greater than $1$ which do not
preserve any finite type subsurface (up to isotopy).  It is not
difficult to find candidate homeomorphisms, but proving exactly
what the weight is remains difficult.
\end{remark}

\section{Application: construction of quasimorphisms}
\label{section:quasimorphisms}

\subsection{Main theorem and discussion}

\begin{theorem}\label{theo:quasimorphisms_weight} Let $S$ be a surface with an isolated marked puncture $p$.
Let $G$ be a subgroup of $\Mod(S;p)$ such that there exist two
loxodromic elements $g,h \in G$ with weights $w(g) \neq w(h)$.
Then the space $\tilde Q(G)$ of non-trivial quasimorphisms
on $G$ is infinite dimensional. 
\end{theorem}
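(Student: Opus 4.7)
The plan is to invoke the Bestvina-Fujiwara anti-alignment criterion recalled in the introduction: since $G$ acts by isometries on the Gromov-hyperbolic graph $\L(S;p)$, it suffices to show that the loxodromic elements $g$ and $h$ (and likewise $g$ and $h^{-1}$) are \emph{anti-aligned}, i.e., no sequence $(f_n) \subset G$ translates arbitrarily long subsegments of a quasi-axis of $g$ into a bounded neighborhood of a quasi-axis of $h$ with matching orientation.

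Suppose for contradiction such $f_n$ exist. By replacing $h$ with $h^{-1}$ (for which $w(h^{-1}) = w(h) \neq w(g)$) or swapping the roles of $g$ and $h$ (replacing $f_n$ by $f_n^{-1}$), we may assume matching orientation and $w(g) > w(h)$. Alignment of the quasi-axes forces convergence of their endpoints on $\partial \L(S;p)$: $f_n \cdot \CC^+(g) \to \CC^+(h)$ and $f_n \cdot \CC^-(g) \to \CC^-(h)$. Via the homeomorphism $F$ of Theorem~\ref{theorem:boundary_homeo}, these become convergences in the space of cliques, which I view as finite subsets of the circle $\partial \tilde S$.

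The core step is a cyclic-order/pigeonhole argument. By Theorem~\ref{theorem:finite cliques}, a suitable power of $h$ acts on $\partial \tilde S$ with Morse-Smale dynamics, so $\CC^+(h) \cap \CC^-(h) = \emptyset$ and the $2w(g)$ points of $f_n \cdot (\CC^+(g) \cup \CC^-(g))$ are arranged in cyclic order as $a_1^n, b_1^n, \ldots, a_{w(g)}^n, b_{w(g)}^n$ with $a_i^n \in f_n \cdot \CC^+(g)$ and $b_i^n \in f_n \cdot \CC^-(g)$. Passing to a subsequence, write $a_i^n \to a_i^\infty$ on $\partial \tilde S$; Proposition~\ref{prop:neighborhoods_on_boundary}, applied with $k \to \infty$ and using that distinct simple rays from $\tilde p$ are distinguished by sufficiently long equator-crossing sequences, shows $a_i^\infty \in \CC^+(h)$, and analogously every subsequential limit of $(b_i^n)$ lies in $\CC^-(h)$. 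Since $w(g) > w(h) = |\CC^+(h)|$, pigeonhole yields a maximal block of consecutive indices $i_1 < \dots < i_m$ with $a_{i_k}^\infty = p^+ \in \CC^+(h)$. For $n$ large, all $a_{i_k}^n$ lie in a small neighborhood $U$ of $p^+$ disjoint from the other points of $\CC^+(h)$; by maximality of the block, the positive cyclic arc from $a_{i_1}^n$ to $a_{i_m}^n$ contains no $a_j^n$ outside the block and hence stays in $U$. This arc contains $b_{i_k}^n$ for $k < m$, forcing $b_{i_k}^n \to p^+$. But then $p^+ \in \CC^+(h) \cap \CC^-(h) = \emptyset$, a contradiction.

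The main obstacle is establishing the topological translation: Gromov-boundary convergence in $\partial \L(S;p)$, transported by $F$, must yield genuine convergence of rays on $\partial \tilde S$ strong enough to support the pigeonhole argument. This rests on Proposition~\ref{prop:neighborhoods_on_boundary} combined with the fact that each simple ray is determined by its infinite equator-crossing sequence from $\tilde p$. Once in place, the weight mismatch delivers the desired contradiction, so $g$ and $h^{\pm 1}$ are anti-aligned, and Bestvina-Fujiwara yields the infinite-dimensionality of $\tilde Q(G)$.
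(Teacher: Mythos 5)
Your overall strategy is the paper's: apply the Bestvina--Fujiwara anti-alignment criterion and derive the contradiction from the cyclic order on $\partial\tilde S$, using that a circle homeomorphism cannot send $2w(g)$ points alternating between $+$ and $-$ type into $2w(h)<2w(g)$ alternating target sets. The execution differs: you run a limiting argument on the images $f_n\cdot\CC^{\pm}(g)$ of the cliques themselves, transported through $F$ and Proposition~\ref{prop:neighborhoods_on_boundary}, whereas the paper (Lemma~\ref{lemma:anti-aligned}) never touches the endpoints at infinity of the translated axis: it builds a quasi-axis of $g$ through $w(g)$ loops placed in pairwise non-adjacent complementary intervals of $\CC^+(g)\cup\CC^-(g)$, and uses the quantitative statements (Lemmas~\ref{lemma:control of neighborhood-Af version}--\ref{lemma:quasi-axis in neighborhood of cliques}) to confine the ends of a long translated subsegment, and of its image, to disjoint neighborhoods $B(\lambda)$. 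Your version is arguably cleaner conceptually but leans on the full strength of Theorem~\ref{theorem:boundary_homeo}; the paper's is more elementary at this point, needing only the $k$-beginning estimates for loops.

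There is one step that is false as literally stated and must be repaired exactly as the paper does. The hypothesis only controls a \emph{finite} (long) subsegment of the quasi-axis of $g$; the rest of the translated axis, and in particular its endpoints $f_n\cdot\CC^{\pm}(g)\in\partial\L(S;p)$, is a priori unconstrained, and moreover the translated segment may sit anywhere along the axis of $h$. For instance, $f_n$ could carry $[x_{-L_n},x_{L_n}]$ near $[y_{m_n-L_n},y_{m_n+L_n}]$ with $m_n\to+\infty$, in which case \emph{both} $f_n\cdot\CC^+(g)$ and $f_n\cdot\CC^-(g)$ have unbounded Gromov product with $\CC^+(h)$ and your claimed convergence $f_n\cdot\CC^-(g)\to\CC^-(h)$ fails. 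The fix is to post-compose each $f_n$ with a power $h^{k_n}$ recentering the image of the segment's midpoint near a fixed basepoint $y_0$ on the axis of $h$ (this is the step ``there exists $k$ such that $h^k\phi(z_0)$ is a uniformly bounded distance from $z_0'$'' in the paper); since $h^{k_n}$ fixes $\CC^{\pm}(h)$ and acts on the circle, the rest of your argument goes through. Separately, your pigeonhole step ``the positive cyclic arc from $a_{i_1}^n$ to $a_{i_m}^n$ \dots stays in $U$'' is not justified as written (both endpoints lying in an interval $U$ does not force the chosen arc to lie in $U$); the clean formulation is that the $w(g)$ complementary arcs $[a_i^n,a_{i+1}^n]$ partition the circle and their limits have all division points in the $w(h)$-point set $\CC^+(h)$, so at least $w(g)-w(h)\ge 1$ of them degenerate to a point of $\CC^+(h)$, squeezing the $b_i^n$ they contain. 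With these two repairs the argument is correct.
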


We defer the proof of Theorem~\ref{theo:quasimorphisms_weight}
to the next section; we now discuss the motivation, as a series of corollaries.

\begin{corollary} Let $S$ be a surface with an isolated marked puncture $p$.
If $G$ is a subgroup of $\Mod(S;p)$ whose second
bounded cohomology group is trivial, then every loxodromic
element in $G$ has the same weight.
\end{corollary}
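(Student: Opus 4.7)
The plan is to deduce this corollary directly from Theorem~\ref{theo:quasimorphisms_weight} via the contrapositive, using the standard link between quasimorphisms and the second bounded cohomology of a group.

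First, I would recall the exact sequence relating homogeneous quasimorphisms and bounded cohomology (see e.g.~Calegari's \emph{scl} monograph): there is an exact sequence
\[
0 \longrightarrow H^1(G;\R) \longrightarrow Q(G) \longrightarrow H^2_b(G;\R) \longrightarrow H^2(G;\R),
\]
so that the space $\tilde{Q}(G) = Q(G)/H^1(G;\R)$ of non-trivial (homogeneous) quasimorphisms injects into $H^2_b(G;\R)$. In particular, if $H^2_b(G;\R)=0$, then $\tilde{Q}(G)=0$, so $\tilde{Q}(G)$ is certainly not infinite dimensional.

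Now I would argue by contrapositive. Suppose that $G$ contains two loxodromic elements $g,h\in G$ with $w(g)\neq w(h)$. Then Theorem~\ref{theo:quasimorphisms_weight} gives that $\tilde{Q}(G)$ is infinite dimensional, and in particular nonzero. By the above injection, $H^2_b(G;\R)\neq 0$, contradicting the hypothesis. Hence any two loxodromic elements of $G$ must have the same weight, which is precisely the statement of the corollary.

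There is essentially no obstacle here, since all the substance lies in Theorem~\ref{theo:quasimorphisms_weight}; the only minor care is to make sure one invokes the correct version of the quasimorphism-to-bounded-cohomology comparison (homogeneous quasimorphisms modulo homomorphisms embed into $H^2_b(G;\R)$), which is classical.
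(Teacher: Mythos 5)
Your proposal is correct and matches the paper's (implicit) argument: the corollary is stated there as an immediate consequence of Theorem~\ref{theo:quasimorphisms_weight}, obtained exactly by the contrapositive together with the classical injection of $\tilde{Q}(G)$ into $H^2_b(G;\R)$. Nothing further is needed.
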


The following corollary is just a restatement of the hypotheses of
Theorem~\ref{theo:quasimorphisms_weight} which may be convenient for
computation.

\begin{corollary} \label{cor:group_actions} Let $S$ be a surface with an isolated marked puncture $p$.
Let $G$ be a subgroup of $\Mod(S;p)$ which contains two elements $g_1,g_2$.
Suppose that there are two essential finite type subsurfaces $S_1, S_2 \subseteq S$
based at $p$ such that
\begin{enumerate}
	\item $g_i$ preserves $S_i$ up to isotopy.
	\item The restriction of $g_i$ to $S_i$ is a pseudo-Anosov
	whose attractive and repulsive foliation have an $n_i$-pronged
	singularity at $p$.
	\item $n_1 \ne n_2$.
\end{enumerate}
Then the space $\tilde Q(G)$ of non-trivial quasimorphisms on $G$ is infinite dimensional. 
\end{corollary}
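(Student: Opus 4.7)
The plan is to reduce the corollary directly to Theorem~\ref{theo:quasimorphisms_weight} by checking that $g_1$ and $g_2$ are loxodromic on $\L(S;p)$ with weights $w(g_i) = n_i$. Since $g_i$ preserves $S_i$ up to isotopy, I would first pick a representative of $g_i$ that literally preserves an embedded copy of $S_i$ (via the essential embedding $\phi_i : (S_i;p)\hookrightarrow(S;p)$) and whose restriction to $S_i$ is the given pseudo-Anosov. Viewing $g_i|_{S_i}$ as an element of $\Mod(S_i;p)$, it is loxodromic on $\L(S_i;p)$, and by the discussion in Section~\ref{section:action_lox} relating cliques to foliations in the finite type case, its attractive and repulsive cliques $\CC^{\pm}(g_i|_{S_i})$ are precisely the sets of leaves of the invariant foliations ending at the $n_i$-pronged singularity $p$. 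In particular $w(g_i|_{S_i}) = n_i$.

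Next I would transport this loxodromic behavior from $\bar\L(S_i;p)$ to $\bar\L(S;p)$ using the maps from Section~\ref{section:subsurfaces_1}. By Lemma~\ref{lemma:qi_embed}, $\phi_i^{\rightarrow}:\bar\L(S_i;p)\to\bar\L(S;p)$ is a quasi-isometric embedding, so for any loop $a \in \L(S_i;p)$ the orbit $(g_i^k \cdot \phi_i^{\rightarrow}(a))_{k\in\Z} = (\phi_i^{\rightarrow}(g_i^k a))_{k\in\Z}$ is a quasi-geodesic in $\L(S;p)$ on which $g_i$ acts by translation, so $g_i$ is loxodromic on $\L(S;p)$. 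By Lemma~\ref{lemma:cliques_to_cliques}, $\phi_i^{\rightarrow}$ sends the maximal high-filling cliques $\CC^{\pm}(g_i|_{S_i})$ to maximal high-filling cliques $\phi_i^{\rightarrow}(\CC^{\pm}(g_i|_{S_i}))$ in $\bar\L(S;p)$, and by Lemma~\ref{lemma:commutes} these correspond under $F_S$ to the two fixed points of $g_i$ on $\partial\L(S;p)$. Hence these are exactly the cliques $\CC^{\pm}(g_i)$, and since $\phi_i^{\rightarrow}$ is injective we get $w(g_i) = |\CC^{\pm}(g_i)| = |\CC^{\pm}(g_i|_{S_i})| = n_i$.

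Since $n_1 \neq n_2$, we have $w(g_1) \neq w(g_2)$ with both $g_1, g_2$ loxodromic in $G$, so Theorem~\ref{theo:quasimorphisms_weight} applies and $\tilde Q(G)$ is infinite dimensional.

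The one step that needs a little care is verifying that the two cliques $\phi_i^{\rightarrow}(\CC^{\pm}(g_i|_{S_i}))$ are genuinely the attractive/repulsive cliques of $g_i$ on $\L(S;p)$, i.e. that no new fixed cliques appear from rays that escape $\phi_i(S_i)$. This however is automatic from loxodromicity: a loxodromic element on a Gromov hyperbolic space has exactly two fixed points on the boundary, and we have already exhibited two distinct fixed points coming from the finite type picture, so these must be all of them.
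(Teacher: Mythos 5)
Your proposal is correct and follows exactly the route the paper intends: the paper treats this corollary as an immediate restatement of the hypotheses of Theorem~\ref{theo:quasimorphisms_weight}, with the verification that each $g_i$ is loxodromic on $\L(S;p)$ of weight $n_i$ supplied by the subsurface-embedding machinery already used in the proof of Theorem~\ref{theo:weights} (Lemmas~\ref{lemma:qi_embed}, \ref{lemma:cliques_to_cliques}, \ref{lemma:commutes}). You have simply written out that verification explicitly, including the correct observation that the two image cliques must exhaust the fixed points of the loxodromic $g_i$ on $\partial\L(S;p)$.
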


The combination of Theorems~\ref{theo:weights} and~\ref{theo:quasimorphisms_weight}
implies that this is not a vacuous subject:
\begin{corollary} \label{cor:quasim_on_MCG} Let $S$ be a surface with an isolated marked puncture $p$.
The space $\tilde Q(\Mod(S;p))$ of non-trivial quasimorphisms on
$\Mod(S;p)$ is infinite dimensional.  In particular, if
$G$ is any group which surjects onto $\Mod(S;p)$,
then $\tilde Q(G)$ is infinite dimensional.
\end{corollary}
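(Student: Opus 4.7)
The plan is to assemble this corollary as a direct consequence of the two preceding theorems, plus the standard pullback property of quasimorphisms. The argument splits naturally according to whether $S$ has infinite or finite type.

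First, assume $S$ is of infinite type. By Theorem~\ref{theo:weights}, for every $n \in \N$ there exists a loxodromic element of $\Mod(S;p)$ of weight $n$; in particular we can pick two loxodromic elements $g,h \in \Mod(S;p)$ with $w(g) = 1 \neq 2 = w(h)$. Taking $G = \Mod(S;p)$ in Theorem~\ref{theo:quasimorphisms_weight} immediately yields that $\tilde Q(\Mod(S;p))$ is infinite dimensional. If instead $S$ has finite type with negative Euler characteristic, then $\Mod(S;p)$ acts acylindrically (and with WPD loxodromic elements, namely pseudo-Anosovs fixing $p$) on the curve graph of $S$, and the classical theorem of Bestvina--Fujiwara from~\cite{Bestvina-Fujiwara} gives the same conclusion.

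For the second assertion, let $\pi \colon G \twoheadrightarrow \Mod(S;p)$ be a surjective homomorphism. Given any quasimorphism $q$ on $\Mod(S;p)$, the composition $q \circ \pi$ is a quasimorphism on $G$ with the same defect, and this map descends to an injective linear map $\tilde Q(\Mod(S;p)) \hookrightarrow \tilde Q(G)$ between the spaces of non-trivial quasimorphisms (a bounded function on $G$ of the form $q \circ \pi$ forces $q$ itself to be bounded, since $\pi$ is surjective, and any homomorphism factoring through $\pi$ is captured by an honest homomorphism on $\Mod(S;p)$). This was already recorded as a general principle in the introduction. Therefore $\tilde Q(G)$ contains a copy of $\tilde Q(\Mod(S;p))$ and is also infinite dimensional.

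There is essentially no obstacle here: the real content has been pushed into Theorems~\ref{theo:weights} and~\ref{theo:quasimorphisms_weight}. The only mild subtlety is to handle the finite type case, where Theorem~\ref{theo:weights} does not apply (indeed Lemma~\ref{lemma:finite_type} rules it out), and one must fall back on the classical Bestvina--Fujiwara mechanism rather than on the machinery developed in this paper.
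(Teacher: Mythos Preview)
Your argument is correct and, for the infinite type case, identical to the paper's: the paper simply states that the corollary follows from combining Theorems~\ref{theo:weights} and~\ref{theo:quasimorphisms_weight}, with no further proof given, and the pullback of quasimorphisms along surjections is the general principle already recorded in the introduction.

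You are in fact more careful than the paper on one point. The paper's standing convention allows $S$ to be of finite type (with $\chi(S)<-2$), and the statement of the corollary does not exclude this. But Theorem~\ref{theo:weights} only applies to infinite type surfaces, so the paper's one-line justification does not literally cover the finite type case. Your fallback to the classical Bestvina--Fujiwara WPD argument for finite type $\Mod(S;p)$ closes that gap; the paper presumably regards this case as well known and outside its focus (as it says, any such restriction is ``an unimportant boundary condition''), but it is good that you noticed the issue and dealt with it explicitly.
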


Corollary~\ref{cor:quasim_on_MCG} can be used to ``lift''
our results about marked surfaces to more general situations.
For example, if $S$ is any surface with a puncture $p$ not accumulated
by genus, and $S'$ is obtained by forgetting enough punctures so that $p$
is isolated, then $\mathrm{PMCG}(S)$, the pure mapping class group
of $S$, surjects onto $\mathrm{PMCG}(S';p)$.  Since
$\mathrm{PMCG}(S';p)$ has an infinite-dimensional space of
quasimorphisms, $\mathrm{PMCG}(S)$ does too. Hence we have the following result.

\begin{corollary} \label{cor:quasim_on_pure_MCG}
Let $S$ be surface with at least one end which is not accumulated by genus. The space $\tilde Q(\PMod(S))$ of non-trivial quasimorphisms on
the pure mapping class group of $S$ is infinite dimensional. 
\end{corollary}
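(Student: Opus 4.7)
The plan is the natural one sketched in the discussion preceding the corollary: produce a surjection $\PMod(S) \twoheadrightarrow \PMod(S';p)$ onto the pure mapping class group of some surface $S'$ with an isolated marked puncture $p$, prove that the target carries an infinite-dimensional space of quasimorphisms, and pull back. Concretely, first I would pick an end $e$ of $S$ which is not accumulated by genus, together with a planar neighborhood of $e$ in $S$, and construct $S'$ by filling in the other ends of $S$ lying in this neighborhood, so that $e$ becomes an isolated puncture $p$ of $S'$. Because every pure mapping class of $S$ fixes the end space of $S$ pointwise, this filling procedure descends to a well-defined forgetful homomorphism $\PMod(S) \to \PMod(S';p)$, which standard Birman-type arguments (iterated, or taken in an appropriate limit when infinitely many ends are filled in) show to be surjective.

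To show that $\tilde Q(\PMod(S';p))$ is infinite dimensional, I would apply Theorem~\ref{theo:quasimorphisms_weight} to $\PMod(S';p)$ viewed as a subgroup of $\Mod(S';p)$. The key observation is that the loxodromic elements of arbitrary weight produced in Theorem~\ref{theo:weights} can already be arranged to lie in $\PMod(S';p)$: by construction they are supported on a finite-type essential subsurface $\Sigma \subset S'$, and after passing to a suitable power they fix every puncture and boundary component of $\Sigma$. Extended by the identity outside $\Sigma$, such an element fixes every end of $S'$ and hence belongs to $\PMod(S';p)$. This supplies pairs of loxodromic elements of distinct weights in $\PMod(S';p)$, to which Theorem~\ref{theo:quasimorphisms_weight} applies. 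Pulling back quasimorphisms through the surjection from the first step then completes the argument.

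The hard part is justifying the surjection $\PMod(S) \twoheadrightarrow \PMod(S';p)$ in the case that $e$ is not already an isolated point of the end space of $S$. In that case ``forgetting'' amounts to filling in a closed set of ends rather than a single isolated puncture, and one must verify both that the resulting object is genuinely a surface in which $e$ has become an isolated puncture, and that the induced map on pure mapping class groups is surjective. The hypothesis that $e$ is not accumulated by genus is exactly what provides a planar neighborhood in which this filling operation is topologically innocuous and does not interfere with the rest of the surface. In the easier case where $e$ is already isolated in the end space of $S$, one simply takes $S' = S$ and $p = e$, and the argument reduces directly to Theorems~\ref{theo:weights} and~\ref{theo:quasimorphisms_weight}.
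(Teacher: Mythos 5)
Your proposal is correct and follows essentially the same route as the paper: the paper also obtains $S'$ by forgetting/filling the ends accumulating at the chosen planar end so that it becomes an isolated puncture $p$, uses the resulting surjection $\PMod(S)\twoheadrightarrow \PMod(S';p)$, notes that the loxodromic elements of distinct weights from Theorem~\ref{theo:weights} already lie in the pure mapping class group (they are powers fixing all boundary components and punctures of a finite-type subsurface, extended by the identity), and pulls back quasimorphisms through the surjection. Your extra care about the surjectivity of the forgetful map and about purity of the loxodromic elements addresses exactly the points the paper leaves implicit.
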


\subsection{Proof of Theorem~\ref{theo:quasimorphisms_weight}}

In this section, we prove Theorem~\ref{theo:quasimorphisms_weight} via a
series of lemmas. We will use Theorem 1.1 of \cite{Bestvina-Fujiwara}:
because $g$ and $h$ have disjoint cliques, they are independent.
It remains to prove that the quasi-axes of $g$ and $h$ are
\emph{anti-aligned}, i.e. $g \nsim h$ in the notation of \cite{Bestvina-Fujiwara}.
Let $\bar x=(x_n)_{n\in \Z}$ be a $(\kappa,\epsilon)$-quasi-axis of $g$ and
let $\bar y=(y_n)_{n\in \Z}$ be a $(\kappa,\epsilon)$-quasi-axis of $h$. We denote
by $B$ the Morse-constant associated to $(\kappa, \epsilon, \delta)$, where
$\delta$ is given by the hyperbolicity of the loop graph. Assume that $w(g) > w(h)$.
We will find a sub-segment $\alpha$ of $(x_n)$ such that for every $\phi \in \Mod(S;p)$,
we have that $\phi(\alpha)$ is not included in the $B$-neighborhood of
$\bar y$.  By definition, the existence of such a sub-segment $\alpha$ implies that $g$ and $h$ are anti-aligned; see~\cite{Bestvina-Fujiwara}.

We start by generalizing the map $A$ from \cite{Juliette}, with the
following definition:
\begin{definition} Let $f \in \Mod(S;p)$ be a loxodromic element,
and let $\CC^+(f)$ be its positive clique. We define
$A_f: \L(S;p) \to \N$ by $A_h(\ell) = k$ where $k$ is maximal such
that $\ell$ $k$-begins like an element of $\CC^+(h)$.
\end{definition}

We will need the following lemmas.

\begin{lemma}
\label{lemma:control of neighborhood-Af version}
Let $h \in \Mod(S;p)$ be loxodromic. For every $k,m \in \N$,
there exists $N \in \N$ such that for every $x,y \in \L(S;p)$ such
that $A_h(x) \geq N$ and $d(x,y)\leq m$ we have $A_h(y)\geq k$.
\end{lemma}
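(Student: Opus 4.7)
The plan is to reduce the statement directly to Lemma~\ref{lemma:control_of_neighborhoods_of_geodesics_general_m}, together with the finiteness of $\CC^+(h)$ supplied by Theorem~\ref{theorem:finite cliques}. The only subtlety is that the previous lemma is phrased starting from one fixed high-filling ray, whereas $A_h$ is sensitive to \emph{any} ray in the clique $\CC^+(h)$; finiteness of $\CC^+(h)$ is what lets us pass from one to the other by a finite maximum.

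First I would apply Theorem~\ref{theorem:finite cliques} to write $\CC^+(h)=\{\lambda_1,\dots,\lambda_w\}$, where $w=w(h)<\infty$. For each $i\in\{1,\dots,w\}$ and for the given values of $k$ and $m$, I would invoke Lemma~\ref{lemma:control_of_neighborhoods_of_geodesics_general_m} applied to the high-filling ray $\lambda_i$: it produces an integer $N_i$ such that every loop within distance $m$ of a loop that $N_i$-begins like $\lambda_i$ must $k$-begin like some element of the clique of $\lambda_i$. By Lemma~\ref{lemma:cliques}, the clique of $\lambda_i$ is the connected component of $\lambda_i$ in $\RGC$, which is exactly $\CC^+(h)$ since $\CC^+(h)$ is a (maximal) clique of high-filling rays.

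Then I would set $N:=\max_{1\le i\le w} N_i$, which is well-defined precisely because $\CC^+(h)$ is finite. If $x,y\in\L(S;p)$ satisfy $A_h(x)\ge N$ and $d(x,y)\le m$, then by the definition of $A_h$ there exists $\lambda_i\in\CC^+(h)$ such that $x$ $N$-begins like $\lambda_i$, and in particular $x$ $N_i$-begins like $\lambda_i$. The conclusion of Lemma~\ref{lemma:control_of_neighborhoods_of_geodesics_general_m} then gives that $y$ $k$-begins like some $\lambda_j\in\CC^+(h)$, so $A_h(y)\ge k$, as required.

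The hard part is not this lemma itself but the ingredients it relies on: the finiteness of $\CC^+(h)$ (Theorem~\ref{theorem:finite cliques}) and the neighborhood control for a single high-filling ray (Lemma~\ref{lemma:control_of_neighborhoods_of_geodesics_general_m}, whose proof comes from \cite{boundary}). Once both of those are available, the present statement is essentially a bookkeeping argument taking a maximum over the finitely many rays of the attractive clique.
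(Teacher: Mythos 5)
Your proposal is correct and is essentially identical to the paper's own proof: both apply Lemma~\ref{lemma:control_of_neighborhoods_of_geodesics_general_m} to each ray of the attractive clique, use the finiteness of $\CC^+(h)$ from Theorem~\ref{theorem:finite cliques} to take the maximum of the resulting constants $N_\lambda$, and conclude via the definition of $A_h$. Your added observation that the clique of each $\lambda_i$ is exactly $\CC^+(h)$ is a harmless clarification of a step the paper leaves implicit.
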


In other words, if two loops are close, and one of them $N$-begins
like a ray in $\CC^+(h)$, then the other is forced to $k$-begin
like a ray in $\CC^+(h)$.

\begin{proof}[Proof of Lemma~\ref{lemma:control of neighborhood-Af version}]
According to Lemma \ref{lemma:control_of_neighborhoods_of_geodesics_general_m},
for every $\lambda \in \CC^+(h)$, there exists $N_\lambda$ such that
if $x$ $N_\lambda$-begins like $\lambda$, then for every ray $y$ such
that $d(x,y)\leq m$, we have $A_h(y)\geq k$.
By Theorem~\ref{theorem:finite cliques}, $\CC^+(h)$ is finite, so we can
define $N:=\max \{N_\lambda | \lambda \in \CC^+(h)\}$. This $N$ satisfies
the lemma.
\end{proof}

\begin{lemma}
\label{lemma:Af is bounded on bounded sets}
Let $E$ be a bounded subset of $\L(S;p)$. For every
$h \in \Mod(S;p)$ with $h$ loxodromic, we have that 
$A_h(E)$ is bounded.
\end{lemma}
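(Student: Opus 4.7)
The plan is to argue by contradiction, using Lemma~\ref{lemma:control of neighborhood-Af version} to propagate a supposedly large value of $A_h$ from some point of $E$ to a fixed basepoint, where it would violate a combinatorial bound coming from equator crossings.

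First I would record the key finiteness observation: for any loop $\ell \in \L(S;p)$, the value $A_h(\ell)$ is finite. Indeed, any representative of a loop crosses the equator only finitely many times (the equator is locally finite and a loop is compact), so $\ell$ can $k$-begin like a given ray only for $k$ at most the number of its equator crossings. Since this bound is finite, $A_h(\ell) < \infty$ for every loop $\ell$. By contrast, by Lemma~\ref{lemma:high_filling_equator_sequence} each element of $\CC^+(h)$ has an infinite sequence of equator crossings, which is what makes $A_h$ potentially large on loops that track a ray of $\CC^+(h)$ for a long time.

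Now let $E$ be bounded; pick any $x_0 \in E$ and set $m = \mathrm{diam}(E) < \infty$, so that every $y \in E$ satisfies $d(x_0, y) \le m$. Suppose toward a contradiction that $A_h$ is unbounded on $E$. Choose any integer $k > A_h(x_0)$. By Lemma~\ref{lemma:control of neighborhood-Af version} applied to this $k$ and $m$, there exists $N \in \N$ such that whenever a loop $y$ satisfies $A_h(y) \ge N$ and $d(y,x_0) \le m$, one has $A_h(x_0) \ge k$. By the unboundedness assumption, we can pick $y \in E$ with $A_h(y) \ge N$; then $d(y, x_0) \le m$, so $A_h(x_0) \ge k > A_h(x_0)$, a contradiction. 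Hence $A_h(E)$ is bounded.

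The main (and essentially only) content is the finiteness of $A_h$ on individual loops; everything else is a formal application of Lemma~\ref{lemma:control of neighborhood-Af version}, which gives precisely the propagation statement we need in a bounded neighborhood.
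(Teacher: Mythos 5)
Your proof is correct and follows essentially the same route as the paper: both argue by contradiction, fixing a basepoint in $E$ and using Lemma~\ref{lemma:control of neighborhood-Af version} with $m=\mathrm{diam}(E)$ to propagate a large value of $A_h$ back to that basepoint, contradicting the finiteness of $A_h$ there. Your explicit remark that $A_h$ is finite on any individual loop (bounded by its number of equator crossings) is a point the paper leaves implicit, but it is the same argument.
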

\begin{proof}
Let $z \in E$ and let $k=A_h(z)+1$. Denote by $m$ the diameter
of $E$. According to Lemma~\ref{lemma:control of neighborhood-Af version},
there exists $N$ such that for every $x,y \in L(S;p)$
such that $A_f(x) \geq N$ and $d(x,y)\leq m$, we have $A_f(y)\geq k$.
By contradiction, assume that for every $M \in \N$, there
exists $x \in E$ with $A_h(x) \geq M$. Choose $x \in E$
such that $A_h(x) \geq N$. For every $y \in E$, we have $d(x,y) \leq m$, thus $A_h(y) \geq k$ for every $y \in E$. This is a contradiction, because $z \in E$ and $A_h(z)=k-1$.
\end{proof}

\begin{lemma}\label{lemma:quasi-axis, Af}
Let $h \in \Mod(S;p)$ be a loxodromic element.
Let $(z_n)_{n \in \Z}$ be a $(\kappa,\epsilon)$-quasi-axis of $h$.
For every $k \in \N$, there exists $N \in \N$ such that for every
$n \geq N$, we have $A_h(z_n)\geq k$.
\end{lemma}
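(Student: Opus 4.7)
The plan is to combine the Gromov-boundary description of loxodromic dynamics with Lemma~\ref{lemma:control of neighborhood-Af version}, using an infinite unicorn path as an intermediate object along which $A_h$ is known to grow, and then transferring this growth to the quasi-axis via stability of quasi-geodesics in the Gromov-hyperbolic graph~$\RG$.

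First I would fix $k\in\N$, pick any $\lambda\in\CC^+(h)$, and take a loop basepoint $a$ (for concreteness $a=z_0$, which is a loop). Form the infinite unicorn path $(x_m)_{m\geq 0}:=P(a,\lambda)$. By the generalization in Section~\ref{section:unicorn_paths} of Lemma~3.2.7 of \cite{boundary}, if $s$ is the number of intersections of $a$ with the equator, then $x_{sN'+1}$ $N'$-begins like $\lambda$ for every $N'\in\N$; since $\lambda\in\CC^+(h)$ this yields $A_h(x_m)\to+\infty$ as $m\to+\infty$. Moreover, by the generalization of Lemma~3.2.5 of \cite{boundary}, $(x_m)$ is a quasi-geodesic, and by Theorem~\ref{theorem:boundary_bijection} it converges in the Gromov boundary of $\RG$ to $F(\CC^+(h))$.

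Next I would invoke stability of quasi-geodesics (Morse lemma) in the $\delta$-hyperbolic graph $\RG$. By hypothesis $(z_n)_{n\geq 0}$ is a $(\kappa,\epsilon)$-quasi-geodesic ray from $z_0$ converging in the Gromov boundary to the attracting fixed point of $h$, which by definition of $\CC^+(h)$ coincides with $F(\CC^+(h))$. Thus $(z_n)_{n\geq 0}$ and $(x_m)_{m\geq 0}$ are two quasi-geodesic rays from the common basepoint $a=z_0$ with the same ideal endpoint in a $\delta$-hyperbolic space, so there exists a constant $D=D(\kappa,\epsilon,\delta)$ with Hausdorff distance at most $D$ between them. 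In particular, for every $n\geq 0$ there is an index $m(n)$ with $d(z_n,x_{m(n)})\leq D$, and since $d(a,z_n)$ grows linearly in $n$ and $d(a,x_m)$ grows linearly in $m$, we have $m(n)\to+\infty$ as $n\to+\infty$.

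Finally, I would conclude using Lemma~\ref{lemma:control of neighborhood-Af version} with constant $m=D$: it produces $N_0\in\N$ such that $A_h(x)\geq N_0$ and $d(x,y)\leq D$ imply $A_h(y)\geq k$. Choose $M$ with $A_h(x_m)\geq N_0$ for all $m\geq M$ (possible by the first step), and then choose $N$ so large that $m(n)\geq M$ for all $n\geq N$ (possible by the second step). For every $n\geq N$, we then have $d(z_n,x_{m(n)})\leq D$ and $A_h(x_{m(n)})\geq N_0$, hence $A_h(z_n)\geq k$, as required.

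The main potential obstacle is the fellow-traveling step: one must know that both the quasi-axis and the infinite unicorn path are genuine quasi-geodesic rays sharing the same boundary point. The quasi-axis part is by hypothesis; the unicorn part rests on the generalization of Lemma~3.2.5 of \cite{boundary} together with the identification $[P(a,\lambda)]=F(\CC^+(h))$ for every $\lambda\in\CC^+(h)$ supplied by Theorem~\ref{theorem:boundary_bijection}. Once these two facts are in hand, the Morse-lemma bound $D$ is a standard consequence of $\delta$-hyperbolicity, and everything else in the proof is bookkeeping.
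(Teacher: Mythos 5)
Your proposal is correct and follows essentially the same route as the paper: both arguments run $A_h\to\infty$ along the unicorn path $P(z_0,\lambda)$ for $\lambda\in\CC^+(h)$, fellow-travel it with the quasi-axis via hyperbolicity, and transfer the lower bound using Lemma~\ref{lemma:control of neighborhood-Af version}. One small overstatement: $d(z_0,x_m)$ need not grow \emph{linearly} in $m$ (unicorn paths are not parametrized quasi-geodesics in general), but the weaker fact you actually need — that $d(z_0,x_m)\to\infty$ because $\lambda$ is high-filling, forcing $m(n)\to\infty$ — is exactly what the paper invokes, so the argument stands.
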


\begin{proof}
We will state a series of facts and then chain them together
to prove the lemma.  There is unfortunately a plethora of dummy variables.

Choose $\lambda \in \CC^+(h)$. Denote by $(a_n)_{n\geq 0}$ the
unicorn path $P(z_0,\lambda)$. 

First, by the unicorn path
construction, we have $\lim_{n \to \infty} A_h(a_n)=\infty$, 
so for every $j \in \N$, there exists $M$ such that for every
$m\geq M$, $A_h(a_m)\geq j$. 

Next, because $\lambda$ is high-filling,
$\lim_{n \to \infty} d(z_0,a_n)=\infty$.  Thus, for all $N,k \in \N$,
there is $M\in \N$ such that if we have a point $x \in L(S;p)$
with $d(z_0,x) > N$ and $d(x,a_m) < k$, then $m > M$.  That is,
if $x$ is very far from $z_0$ but close to some $a_m$, then $m$
must be large.

Next, the unicorn path $(a_n)_n = P(z_0,\lambda)$ is uniformly close to
a geodesic sequence to $\lambda$, and the quasi-axis of $h$ is also
uniformly close to this geodesic.  Hence, the unicorn path
$P(z_0,\lambda)$ is uniformly close to the quasi-axis $(z_n)_n$.
That is, there is some $L$ such that for all $n$, there is
some $m$ so that $d(z_n,a_m) < L$.

Finally, let us be given $k \in \N$ as in the lemma.  By
Lemma~\ref{lemma:control of neighborhood-Af version}, there is
some $M$ such that if $d(x,y) < L$ and $A_h(x) > M$, then
$A_h(y) > k$.  By our first observation, there is some $M'$
such that for all $m > M'$, we have $A_h(a_m) > M$.
By our second observation, there is some $N'$ such that
if $d(z_0, x) > N'$ and $d(x, a_m) < L$, then $m > M'$.
Because $(z_n)_n$ is a quasi-axis, there is some $N$ such that
if $n > N$, then $d(z_0,z_n) > N'$.  Chaining
these together, we conclude that if $n > N$, we have $d(z_0, z_n) > N'$;
by our third observation there is some $a_m$ so that $d(z_n, a_m) < L$,
which implies that $m>M'$, which implies that $A_h(a_m) > M$,
and again applying that $d(z_n, a_m) < L$, we have $A_h(z_n) > k$.
\end{proof}

\begin{lemma} \label{lemma:quasi-axis nbhd}
Let $h \in \Mod(S;p)$ be a loxodromic, and let $(z_n)$ be a
quasi-axis of $h$. For every $k, C \in \N$, there exists
$M \in \N$ such that for every $m \geq M$, we have 
$A_h(N_C(z_m))\geq k$, where $N_C(z_m)$ denotes the
$C$-neighborhood of $z_m$.
\end{lemma}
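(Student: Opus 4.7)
The plan is to combine Lemma~\ref{lemma:quasi-axis, Af}, which forces $A_h$ to become arbitrarily large along the quasi-axis itself, with Lemma~\ref{lemma:control of neighborhood-Af version}, which propagates a large value of $A_h$ to all nearby loops. Taken together, these two results give exactly the conclusion we want, so the proof is essentially a careful chaining of quantifiers.

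More precisely, given $k, C \in \N$, the first step is to apply Lemma~\ref{lemma:control of neighborhood-Af version} with the parameters $k$ (the target value) and $m := C$ (the maximal distance). This produces a threshold $N \in \N$ with the property that whenever $x, y \in \L(S;p)$ satisfy $A_h(x) \geq N$ and $d(x,y) \leq C$, one has $A_h(y) \geq k$.

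Next, I would apply Lemma~\ref{lemma:quasi-axis, Af} with the input parameter $N$. This yields an index $M \in \N$ such that for every $m \geq M$, the point $z_m$ on the quasi-axis satisfies $A_h(z_m) \geq N$.

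Finally, combining the two: for any $m \geq M$ and any $y \in N_C(z_m)$, we have $A_h(z_m) \geq N$ and $d(z_m, y) \leq C$, so the first step forces $A_h(y) \geq k$. Thus $A_h(N_C(z_m)) \geq k$ for all $m \geq M$, as required. There is no real obstacle here; the statement is a direct consequence of the two preceding lemmas, and the only thing to watch is the order of quantifiers (choose $N$ first from the propagation lemma, then apply the quasi-axis lemma with that $N$ as the target).
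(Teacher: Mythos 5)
Your proof is correct and follows exactly the same route as the paper: first use Lemma~\ref{lemma:control of neighborhood-Af version} (with distance parameter $C$) to get a threshold (the paper calls it $k'$, you call it $N$), then feed that threshold into Lemma~\ref{lemma:quasi-axis, Af} to find $M$. The quantifier order is handled correctly and nothing is missing.
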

\begin{proof}
Let $k, C \in \N$. By Lemma~\ref{lemma:control of neighborhood-Af version},
there exists $k'\in \N$ such that for every $x \in L(S;p)$
with $A_h(x)\geq k'$, we have $A_h(N_C(x)) \geq k$.
By Lemma~\ref{lemma:quasi-axis, Af}, there exists $M \in \N$ such
that for every $m\geq M$, we have $A_h(z_m) \geq k'$, hence
$A_h(N_C(z_m))\geq k$.
\end{proof}

We now restate Lemma~\ref{lemma:quasi-axis nbhd} in the language
of the conical cover.  Here we think of a high-filling
ray $\lambda$ as a point on the $S^1$ boundary of the conical
cover, and we can consider neighborhoods of loops and rays
as open intervals in $S^1$.

\begin{lemma}\label{lemma:quasi-axis in neighborhood of cliques}
Let $h \in \Mod(S;p)$ be loxodromic. Let $(z_n)_n$ be a quasi-axis
of $h$.  For every $C>0$, there exists $N\in \N$ and a
neighborhood $B(\lambda) \subset \S^1$ for each element
$\lambda \in \CC^+(h) \cup \CC^-(h)$ seen in the boundary of
the conical cover, such that:
\begin{itemize}
\item For any two distinct $\lambda$ and $\lambda'$ in $\CC^+ \cup \CC^-$, the neighborhoods $B(\lambda)$ and $B(\lambda')$ are disjoint.
\item The $C$-neighborhood of the half-quasi-axis $(z_n)_{n>N}$ is included in $\cup_{\lambda \in \CC^+} B(\lambda)$.
\item The $C$-neighborhood of the half-quasi-axis $(z_n)_{n<N}$ is included in $\cup_{\lambda \in \CC^-} B(\lambda)$.
\end{itemize}
\end{lemma}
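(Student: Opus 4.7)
The plan is to combine three ingredients: the finiteness of the cliques $\CC^\pm(h)$ from Theorem~\ref{theorem:finite cliques}, Lemma~\ref{lemma:quasi-axis nbhd} (which forces $A_h$ to be large on $C$-neighborhoods deep along the positive half-quasi-axis), and the cover-convergence characterization in Lemma~\ref{lemma:cover_converge_iff_k_begin}, which translates the combinatorial condition of $k$-beginning into proximity of endpoints on $S^1 = \partial \tilde{S}$. The overall idea is that if a loop lies in the $C$-neighborhood of some $z_m$ with $m$ very large, then Lemma~\ref{lemma:quasi-axis nbhd} forces it to $k$-begin like some element of $\CC^+(h)$, and cover-convergence will then push its endpoint on $S^1$ into a small preassigned neighborhood of that element.

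First I would observe that $\CC^+(h)$ and $\CC^-(h)$ are disjoint finite subsets of $S^1$: finiteness is Theorem~\ref{theorem:finite cliques}, and disjointness holds because these are respectively the attracting and repelling fixed points of the loxodromic $h$ on $\partial \L(S;p)$. I can therefore choose pairwise disjoint open neighborhoods $B(\lambda) \subset S^1$, one for each $\lambda \in \CC^+(h) \cup \CC^-(h)$. Second, for each such $\lambda$ I claim there exists $k_\lambda \in \N$ such that any loop $k_\lambda$-beginning like $\lambda$ has its endpoint on $S^1$ inside $B(\lambda)$. If this failed I could extract a sequence of loops $x_i$, with $x_i$ $i$-beginning like $\lambda$ but with endpoint outside $B(\lambda)$; by Lemma~\ref{lemma:cover_converge_iff_k_begin} this sequence would cover-converge to $\lambda$, so its endpoints would enter every open neighborhood of $\lambda$, contradicting avoidance of $B(\lambda)$. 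Since $\CC^+(h) \cup \CC^-(h)$ is finite, $k := \max_\lambda k_\lambda$ is well-defined.

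Finally I would apply Lemma~\ref{lemma:quasi-axis nbhd} with constants $k$ and $C$ to obtain $M^+$ such that for all $m \geq M^+$, every loop in $N_C(z_m)$ satisfies $A_h \geq k$; by construction such a loop $k$-begins like some $\lambda \in \CC^+(h)$, hence its endpoint lies in $B(\lambda) \subset \bigcup_{\lambda \in \CC^+(h)} B(\lambda)$. Running the same argument for $h^{-1}$, whose attracting clique is $\CC^-(h)$ and whose quasi-axis is the reversed sequence, yields $M^-$ controlling the negative half; setting $N := \max(M^+, M^-)$ completes the proof. The main obstacle is really the second step: translating the combinatorial condition that a loop $k$-begins like $\lambda$ into the topological statement that its endpoint lies in the preassigned neighborhood $B(\lambda)$. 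This bridge is provided precisely by the cover-convergence machinery built in Section~\ref{section:cover_convergence}, which relies on the equator being made of arcs rather than closed curves.
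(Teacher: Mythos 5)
Your proposal is correct and follows essentially the same route as the paper: choose disjoint neighborhoods for the finitely many clique elements, use Lemma~\ref{lemma:quasi-axis nbhd} (for both $h$ and $h^{-1}$) to force large $A_h$-values deep along each half-axis, and translate ``$k$-begins like $\lambda$'' into ``endpoint lies in $B(\lambda)$.'' The only difference is cosmetic: the paper simply \emph{defines} $B(\lambda)$ as the set of rays and loops $k$-beginning like $\lambda$, whereas you start from arbitrary disjoint neighborhoods and justify the bridge via the cover-convergence argument of Lemma~\ref{lemma:cover_converge_iff_k_begin}, which makes that step slightly more explicit but is the same idea.
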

\begin{proof}
A neighborhood $B(\lambda)$ of a high-filling ray $\lambda$
corresponds to the set of all rays and loops which $k$-begin
like $\lambda$ for some $k$ (see Section~\ref{section:cover_convergence}).
There are finitely many elements of
$\CC^+(h) \cup \CC^-(h)$, so we can take a $k$ so that if a ray or
loop $k$-begins like an element of $\CC^+(h) \cup \CC^-(h)$,
then it lies in $B(\lambda)$ for some $\lambda$, 
and the $B(\lambda)$ are disjoint.
Now we apply
Lemma~\ref{lemma:quasi-axis nbhd} for both $h$ and $h^{-1}$:
there is some $M$ such that
for all $m\ge M$ we have $A_h(N_C(z_m))\ge k$ and
for all $m\ge M$ we have $A_{h^{-1}}(N_C(z_{-m}))\ge k$, meaning that 
anything in the $C$-neighborhood of the quasi-axis in a
sufficiently positive direction must $k$-begin like
a ray in $\CC^+(h)$, and anything in a sufficiently
negative direction must $k$-begin like a ray in $\CC^-(h)$.
(Note the negative sign on the quasi-axis, because it gets reversed
for $h^{-1}$).  The $N$ desired in the lemma is then the $M$ we have
just found.
As we observed above, this implies that as points on the boundary
of the conical cover, we have the last two bullet points
of the lemma.
\end{proof}

\begin{lemma} \label{lemma:anti-aligned}
Let $g,h \in \Mod(S;p)$ be two loxodromic elements with
different weights $w(g) \ne w(h)$.
Then in the language of~\cite{Bestvina-Fujiwara},
$g$ and $h$ are independent and anti-aligned ($g\nsim h$).
\end{lemma}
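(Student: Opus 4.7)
The plan is to exhibit a single subsegment $\alpha$ of $\bar{x}$ whose image under \emph{any} $\phi\in\Mod(S;p)$ escapes the $B$-neighborhood of $\bar{y}$. Independence of $g$ and $h$ is automatic, since their fixed cliques on $\partial \L(S;p)$ are distinct points of the Gromov boundary; all the content is in the anti-alignment claim $g\nsim h$.

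First I would apply Lemma~\ref{lemma:quasi-axis in neighborhood of cliques} with $C=B$ to both quasi-axes, producing disjoint $S^1$-neighborhoods $\{B_g(\lambda)\}_{\lambda\in\CC^\pm(g)}$ and $\{B_h(\mu)\}_{\mu\in\CC^\pm(h)}$ such that the $B$-neighborhood of the positive (respectively negative) half of $\bar{x}$ sits on $S^1$ inside $\bigcup_{\lambda\in\CC^+(g)}B_g(\lambda)$ (respectively $\CC^-(g)$), and similarly for $\bar{y}$; I would choose these neighborhoods as small as the lemma permits. Next, using Theorem~\ref{theorem:finite cliques}, $g$ acts on $S^1$ as a cyclic permutation of order $w(g)$ on $\CC^+(g)$, so combining this with the fact that $\bar{x}$ is a $g$-quasi-axis, the sequence $(x_n)_{n\geq N_g}$ viewed on $S^1$ has accumulation set exactly $\CC^+(g)$ and visits each $B_g(\lambda)$ infinitely often in a periodic cyclic pattern. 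I would then define $\alpha=(x_N,\ldots,x_{N+L-1})$ with $N$ and $L$ large enough that $\alpha$ contains points of $S^1$ arbitrarily close to each of the $w(g)$ elements of $\CC^+(g)$.

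Suppose for contradiction that $\phi(\alpha)\subset N_B(\bar{y})$ for some $\phi\in\Mod(S;p)$. By the Morse property for quasi-geodesics in the hyperbolic graph $\L(S;p)$, there is a monotone index function $f$ with each $\phi(x_i)$ within a uniform graph-distance of $y_{f(i)}$. For $L$ much larger than $N_h$, I would extract a sub-block $\alpha'\subset\alpha$ that still visits all $w(g)$ neighborhoods $B_g(\lambda)$ but whose image under $f$ lies entirely in one tail of $\bar{y}$, say the positive one. Applying Lemma~\ref{lemma:quasi-axis in neighborhood of cliques} to $\bar{y}$, every $\phi(x_i)\in\phi(\alpha')$ lies on $S^1$ in one of the $w(h)$ disjoint arcs $B_h(\mu)$, $\mu\in\CC^+(h)$. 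Since $\phi$ is an orientation-preserving homeomorphism of $S^1$, its images of points of $\alpha'$ approaching each distinct $\lambda\in\CC^+(g)$ cluster near $w(g)$ distinct cyclically-ordered points, namely $\phi(\CC^+(g))\subset S^1$.

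The main obstacle---and the step I would confront last---is turning the pigeonhole observation that $w(g)$ cyclically-ordered cluster points are being squeezed into $w(h)<w(g)$ disjoint arcs into a genuine contradiction, since a homeomorphism of $S^1$ can a priori distort a disjoint union of small arcs to lie inside a single arc. My plan is to exploit the dynamical rigidity encoded in the $A_g, A_h$ machinery of Lemmas~\ref{lemma:control of neighborhood-Af version}--\ref{lemma:quasi-axis in neighborhood of cliques}: by shrinking $B_g(\lambda)$ and tracking the combinatorial visit pattern of $\alpha'$, the assignment sending each $\lambda\in\CC^+(g)$ to the $B_h(\mu)$-class containing the $\phi$-image of a point approximating $\lambda$ must be a cyclic-order-respecting surjection $\CC^+(g)\to\CC^+(h)$ equivariant with respect to the cyclic $g$- and $h$-actions on the two cliques. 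No such equivariant surjection can exist between cyclic groups of different orders $w(g)$ and $w(h)$, yielding the contradiction and hence the anti-alignment $g\nsim h$ needed to invoke Bestvina--Fujiwara.
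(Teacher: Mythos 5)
You have the right ingredients (the circle action, the neighborhoods $B(\lambda)$ from Lemma~\ref{lemma:quasi-axis in neighborhood of cliques}, a counting contradiction), and you correctly flag the crux: squeezing $w(g)$ cyclically ordered points into $w(h)<w(g)$ disjoint arcs is \emph{not} by itself a contradiction. But your proposed resolution of that crux does not work. There is no reason the assignment $\CC^+(g)\to\CC^+(h)$ you describe should be surjective --- a homeomorphism of $S^1$ can perfectly well send all $w(g)$ points near $\CC^+(g)$ into a single arc $B_h(\mu)$ --- and there is no equivariance to exploit: $\phi$ is an arbitrary element of $\Mod(S;p)$, not something conjugating $g$ to $h$, so the cyclic $g$-action on $\CC^+(g)$ and the $h$-action on $\CC^+(h)$ are not intertwined by $\phi$. (Also, Theorem~\ref{theorem:finite cliques} does not give that $g$ acts as a single $w(g)$-cycle on $\CC^+(g)$; it could have several orbits.) As written, the "equivariant surjection" step is a hope rather than an argument, and your one-sided setup (only $\CC^+(g)$ versus $\CC^+(h)$) cannot be completed.

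The paper's proof closes this gap by using \emph{both} cliques at once. One builds the quasi-axis of $g$ from a block $Z$ of $w(g)$ loops placed in pairwise non-adjacent components of $S^1\setminus(\CC^+(g)\cup\CC^-(g))$ and takes its $g$-orbit; then a long subsegment $W$ of this quasi-axis has $w(g)$ initial points lying in $w(g)$ \emph{distinct} neighborhoods of elements of $\CC^-(g)$ and $w(g)$ terminal points in $w(g)$ distinct neighborhoods of elements of $\CC^+(g)$. Since the two cliques of $g$ alternate on $S^1$, these $2w(g)$ points alternate in type around the circle. If some $\phi$ sent $W$ into the $C$-neighborhood of the quasi-axis of $h$, then after translating by a power of $h$ the initial points must land in $\bigcup_{\mu\in\CC^-(h)}B(\mu)$ and the terminal points in $\bigcup_{\mu\in\CC^+(h)}B(\mu)$, i.e.\ into $2w(h)$ alternating intervals. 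A circle homeomorphism preserves the cyclic alternation pattern of the $2w(g)$ points, and each adjacent pair of differently-typed points forces a crossing between a $\CC^-(h)$-interval and a $\CC^+(h)$-interval; with only $2w(h)<2w(g)$ such intervals this is impossible. It is precisely this two-sided alternation --- absent from your argument --- that turns the cardinality inequality into a genuine cyclic-order obstruction.
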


\begin{proof}
Assume without loss of generality that $w(g)>w(h)$. Let
$C$ be the Morse constant of the loop graph. Let $(z_n)_n$
be a quasi-axis of $g$ constructed as follows.
By Theorem~\ref{theorem:finite cliques}, the elements
of the two cliques $\CC^+(g)$ and $\CC^-(g)$ alternate on the
boundary $S^1$ of the conical cover. They bound $2w(g)$
segments, on which the dynamics of $g$ is very simple. We
choose $w(g)$ loops $(z_1,\ldots,z_{w(g)})$ so that no two of
them are in adjacent segments of $S^1 - (\CC^+(g)\cup \CC^-(g))$.
Choose mutually disjoint segments in $\L(S;p)$ between $z_i$ and
$z_{i+1}$ for every $1\leq i \leq w(g)$, and a segment between
$z_{w(g)}$ and $g(z_{1)})$. Denote by $Z$ the union of these
$w(g)+1$ segments. We define the quasi-axis $(z_n)_n$ of $g$ as
the orbit of $Z$ by $g$.

Let $(z'_n)$ be a quasi-axis of $h$. Let $N$ and $N'$, be as in
Lemma~\ref{lemma:quasi-axis in neighborhood of cliques}
for $g$ and $h$, respectively.
Also define neighborhoods $B(\lambda)$ as in
Lemma~\ref{lemma:quasi-axis in neighborhood of cliques}
for both $g$ and $h$, for all $\lambda$ in $\CC^\pm(g) \cup \C^\pm(h)$.
Let $W=(w_1,\ldots,w_K)$ be a long sub-segment of $(z_n)_n$ which
contains covers the intervals from $z_{-N-N'-100C}$
through $z_{N+N'+100C}$.
By the construction of $(z_n)_n$, 
the elements at the beginning and end of this sub-segment
$W$ must lie in the neighborhoods $B(\lambda)$.  Specifically,
for $1\leq i,j \leq w(g)$ with $i\ne j$, we have $w_i \in B(\lambda_i)$
and $w_j \in B(\lambda_j)$, where
$B(\lambda_i) \cap B(\lambda_j) = \varnothing$ and
$\lambda_i, \lambda_j \in \CC^-(g)$.  We also have
$w_{K-i} \in B(\lambda_{K-i})$ and $w_{K-j} \in B(\lambda_{K-j})$,
where $B(\lambda_{K-i}) \cap B(\lambda_{K-j}) = \varnothing$
and $\lambda_{K-i}, \lambda_{K-j} \in \CC^-(g)$.

Now suppose towards a contradiction that we have
$\phi \in \Mod(S;p)$ which maps $W$ into the $C$-neighborhood
of $(z'_n)_n$.  The interval $\phi(W)$ might not be centered
about $z'_0$, but we can shift it back with some power of $h$.
That is, there exists $k$ such that $h^k\phi(z_0)$ is a uniformly
bounded distance from $z'_0$.  Then the beginning and
end of the image subsegment $h^k\phi(W)$ are far from $z'_0$,
and thus:
\begin{itemize}
\item $h^k\phi(w_i)\in \cup_{\lambda \in \CC^-(h)}B(\lambda)$ for every $1\leq i \leq w(g)$.
\item $h^k\phi(w_{K-i})\in \cup_{\lambda \in \CC^+(h)}B(\lambda)$ for every $1\leq i \leq w(g)$.
\end{itemize}
Moreover, $h^k \phi$ acts on the boundary $S^1$ of the conical cover by a homeomorphism.
So we now have $2w(g)$ points which must alternate around $S^1$, and
they are mapped via a circle homeomorphism into $2w(h) < 2w(g)$
alternating intervals, which is impossible.  We conclude
that there can be no such $\phi$, and thus $g$ and $h$ are
anti-aligned.
\end{proof}

\addtocontents{toc}{\protect\vskip15pt}
\appendix
\section{Geodesic representatives of proper arcs}

In Section~\ref{sec:surfaces}, we constructed a
decomposition of an arbitrary surface by cutting it along
a collection of disjoint proper simple arcs.  A key
fact in the proof of Theorem~\ref{thm:hyperbolic} is
that in an infinite type hyperbolic surface,
a simple proper arc is ambient isotopic to a geodesic.
This section is devoted to the proof of this fact.
Although experts agree that this should be straightforward,
we are not aware of a direct proof in the literature.

We call \emph{proper arc} the image of the real open segment
$(0,1)$ by any immersion $\phi$ such that for any compact
subsurface $S'$ of $S$, there exists some $\varepsilon>0$ so
that $\phi((0,\varepsilon)\cup (1-\varepsilon,1))$ is disjoint
from $S'$.

We will take a general view that an \emph{isotopy} of $S$
is any continuous family of homeomorphisms containing
the identity.  There are two particular kinds of isotopies
in which we will be interested.

First, we say that a proper arc $\alpha$ is \emph{ambient isotopic}
to another arc $\beta$ if there is an isotopy
$\Phi:[0,1]\times S \to S$ with $\Phi(0,\cdot)$ the identity
and $\Phi(1,\cdot)\circ \alpha = \beta$.  Note 
that for all $t$, $\Phi(t,\cdot) \circ \alpha$ is a proper arc.

Second, we say that a proper arc $\alpha$ is
\emph{non-essential} if there exists
an isotopy $\Phi:[0,1)\times S \to S$ such that
$\Phi(0,\cdot)$ is the identity and for every compact
subsurface $S'\subseteq S$ there is some $T$ such that the
image of the composition $\Phi(t,\cdot)\circ \alpha$,
thought of as a subset of $S$, is disjoint from $S'$ for all $t>T$.
In other words, $\alpha$ is non-essential if it can be pushed off
$S$ out to infinity.  Note that the maps $\Phi(t,\cdot)$ will 
necessarily be rather violent as $t$ tends to $1$.
The opposite of non-essential is essential.

We remark that we could instead focus on homotopies of
the proper arcs themselves rather than these ambient isotopies.
However, because our end goal is to say something
about the mapping class group of $S$, it is useful to
start with the language of isotopies.  We also
remark that difference in domain between $[0,1]$ and $[0,1)$
is quite important.  For example, given a proper arc
$\alpha$ in $S$, we can construct an isotopy
$\Phi:[0,1)\times S \to S$ such that the pointwise limit of
$\Phi(t,\cdot)\circ \alpha$ is contained in a compact ball (by
pulling the ends of $\alpha$ in from infinity).  However,
this does not imply that proper arcs are ambient isotopic
to finite compact arcs, because there is no way to continuously
extend $\Phi$ to $\{1\}\times S$.

The goal of this section is to prove the following Lemma:

\begin{lemma}\label{lem:isotopy_of_arcs}
Let $S$ be an infinite type orientable surface without boundary, equipped with a complete hyperbolic metric of the first kind. Let $\alpha$ be a proper simple arc in $S$ which is essential. Then $\alpha$ is isotopic (by an ambient isotopy) to a unique geodesic arc.
\end{lemma}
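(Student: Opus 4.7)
The plan is to lift $\alpha$ to the universal cover $\pi:\mathbb H^2 \to S$, extract a pair of distinct ideal endpoints on $\partial \mathbb H^2$ from a lift, use the unique hyperbolic geodesic between them as the candidate representative, and construct a $\pi_1(S)$-equivariant isotopy in $\mathbb H^2$ that descends to an ambient isotopy on $S$. Uniqueness will be immediate, because any ambient isotopy preserves the ideal endpoints of a lift.

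The first step is to extract well-defined ideal endpoints $\xi_-,\xi_+ \in \partial \mathbb H^2$ for a lift $\tilde\alpha$. A priori a proper embedded arc in $\mathbb H^2$ may oscillate and have no limit on $\partial\mathbb H^2$, but because $\alpha$ is proper in $S$ its two ends exit definite ends of $S$. Moreover, properness of $\alpha$ rules out its lift accumulating on a hyperbolic fixed point of $\Gamma := \pi_1(S)$: if it did, $\tilde\alpha$ would eventually lie in a neighborhood of a hyperbolic axis, forcing $\alpha$ to accumulate on a closed geodesic in $S$, which is incompatible with $\alpha$ exiting every compact set. After a preliminary ambient isotopy supported in neighborhoods of the ends of $S$ that $\alpha$ meets (automatic near a cusp, a simple local model near any non-cusp end) we may assume $\tilde\alpha$ converges to definite points $\xi_\pm \in \partial \mathbb H^2$, each with stabilizer either trivial or cyclic parabolic.

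The second and most delicate step is to show $\xi_- \ne \xi_+$ using essentiality. Suppose $\xi_-=\xi_+=\xi$; then $\tilde\alpha \cup \{\xi\}$ bounds a closed topological disk $D\subset \overline{\mathbb H^2}$ meeting $\partial\mathbb H^2$ only at $\xi$. Using simplicity of $\alpha$, I would argue that the only $\Gamma$-translates of $\tilde\alpha$ lying inside $D$ are $\Gamma_\xi$-translates of $\tilde\alpha$ itself, so $D/\Gamma_\xi$ embeds in $S$ as a one-ended neighborhood of some end $e$ of $S$ (a contractible wedge when $\Gamma_\xi$ is trivial, a cusp half-neighborhood when $\Gamma_\xi$ is parabolic). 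This embedded region then yields an explicit ambient isotopy pushing $\alpha$ out of every compact set along $e$, contradicting essentiality. This is the main obstacle; the subtle points are ruling out other orbits of $\tilde\alpha$ inside $D$ and writing down the pushoff in the non-cusp, trivial-stabilizer case.

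Granted $\xi_- \ne \xi_+$, let $\tilde g$ be the unique hyperbolic geodesic from $\xi_-$ to $\xi_+$ and set $g = \pi(\tilde g)$. Simplicity of $g$ follows by a linking argument on $\partial\mathbb H^2$: two distinct $\Gamma$-translates of $\tilde g$ intersect in $\mathbb H^2$ iff their endpoint pairs link at infinity, but these coincide with the endpoint pairs of the corresponding translates of $\tilde \alpha$, which cannot link because the $\Gamma$-translates of $\tilde\alpha$ are pairwise disjoint proper arcs with distinct endpoints at infinity. For the ambient isotopy, I would use nearest-point projection from $\tilde\alpha$ to $\tilde g$ along perpendicular geodesics to define a $\Gamma$-equivariant straight-line homotopy in $\mathbb H^2$, extend it to a $\Gamma$-equivariant ambient isotopy of $\mathbb H^2$ (using that the collection of $\Gamma$-translates of $\tilde\alpha \cup \tilde g$ is locally finite by properness), and descend to an ambient isotopy on $S$. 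Uniqueness of $g$ is then automatic: any geodesic ambient-isotopic to $\alpha$ lifts to a geodesic sharing the ideal endpoints $\xi_-, \xi_+$ of $\tilde\alpha$, hence equals $\tilde g$.
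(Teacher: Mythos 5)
Your strategy---lift to $\mathbb H^2$, read off ideal endpoints, straighten equivariantly---is genuinely different from the paper's, which works downstairs: it first arranges finitely many intersections and finitely many bigon-intervals with the cuffs of a pants decomposition, removes bigons one at a time while checking that the resulting \emph{infinite} composition of ambient isotopies converges on compact sets (Lemma~\ref{lem:minimal_isotopy}), and only then invokes Epstein's isotopy theorem inside each compact pair of pants. Your outline is attractive, but as written it has genuine gaps.

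The most serious one is the last step. Nearest-point projection onto $\tilde g$ along the perpendicular foliation gives a \emph{homotopy} of the map $\tilde\alpha\to\mathbb H^2$, not an isotopy of embeddings: unless $\tilde\alpha$ is already a graph over $\tilde g$ in Fermi coordinates, it meets some perpendicular fiber in several points, and pushing all of them toward $\tilde g$ simultaneously produces intermediate arcs that are not embedded (and distinct translates of $\tilde\alpha$ can likewise collide during the deformation). A non-embedded intermediate stage cannot be realized by any ambient isotopy, so ``extend it to a $\Gamma$-equivariant ambient isotopy'' is precisely the content of the lemma rather than a routine extension; this is what the bigon-removal lemmas and Epstein's theorem are doing in the paper, and it is also where the infinite-type hypothesis bites, since infinitely many local modifications must be controlled at once. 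The second gap is the ``preliminary ambient isotopy supported in neighborhoods of the ends'' used to make $\tilde\alpha$ converge to definite points of $\partial\mathbb H^2$. For an infinite-type surface of the first kind a non-cusp end has no ``simple local model''---it may be accumulated by genus, by cusps, or by other ends---so there is no standard neighborhood in which to normalize $\alpha$, and a proper arc exiting such an end can a priori have a lift whose accumulation set on $\partial\mathbb H^2$ is a nondegenerate interval; you need an actual argument here (for instance via the bi-infinite cuff-crossing sequence of a pants decomposition, which pins down the endpoints of the lift). Your linking argument for the simplicity of $g$, and the uniqueness statement, are fine; the step deducing $\xi_-\neq\xi_+$ from essentiality is plausibly completable but, as you acknowledge, not yet an argument.
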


The proof of this Lemma follows from the following Lemmas.

\begin{lemma}\label{lem:proper_alpha_transverse}
Let $S$ and $\alpha$ be as in Lemma~\ref{lem:isotopy_of_arcs}.
Let $\Gamma$ be a locally finite collection of simple closed geodesic
arcs in $S$.  Then $\alpha$ is ambient-isotopic to a proper arc
which is a smooth immersion transverse to $\Gamma$.
\end{lemma}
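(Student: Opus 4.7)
The plan is a standard exhaustion-plus-transversality argument adapted to infinite type surfaces. Choose an exhaustion of $S$ by compact subsurfaces $K_0 = \varnothing \subset K_1 \subset K_2 \subset \cdots$ with $K_n \subset \mathrm{int}(K_{n+1})$ and $\bigcup_n K_n = S$. Since $\Gamma$ is locally finite, each $K_n$ meets only finitely many arcs of $\Gamma$; write $\Gamma_n$ for this finite subcollection. Since $\alpha$ is proper, each $\alpha \cap K_n$ is compact and so consists of finitely many sub-arcs. The idea is to perform countably many compactly supported ambient isotopies, each fixing what has already been arranged on deeper compact sets, and to concatenate them.

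First I would set up the induction. Assume that after stage $n$ we have produced an ambient isotopy $\Phi_n$ supported in some compact set so that $\alpha_n := \Phi_n \circ \alpha$ is a smooth immersion transverse to $\Gamma_n$ at every point of $K_n$. For stage $n+1$, I focus on the compact piece $\alpha_n \cap K_{n+2}$: this is a finite disjoint union of smoothly embedded compact arcs away from $K_{n-1}$ (where the configuration must be preserved). Applying the classical transversality theorem for smooth maps from a compact $1$-manifold into the smooth surface $S$, I obtain a $C^0$-small smooth perturbation of these arcs which is transverse to the finite collection $\Gamma_{n+1}$ on $K_{n+1}$. By the isotopy extension theorem on the surface (applied in the open set $\mathrm{int}(K_{n+2}) \setminus K_{n-1}$), this perturbation is realized by an ambient isotopy $\phi_{n+1}$ with compact support disjoint from $K_{n-1}$. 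Setting $\Phi_{n+1} = \phi_{n+1} \circ \Phi_n$ completes the inductive step, and the transversality achieved on $K_n$ is preserved because $\phi_{n+1}$ is the identity there.

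Now I need to pass to the infinite composition. Because $\mathrm{supp}(\phi_n) \subset K_{n+1} \setminus K_{n-1}$, the supports form a locally finite family in $S$, so only finitely many $\phi_n$ are nontrivial on any compact neighborhood of a given point. Consequently the pointwise limit $\Phi_\infty := \lim_n \Phi_n$ is a well-defined homeomorphism of $S$, and reparameterizing the individual isotopies on time intervals $[1 - 2^{-(n-1)}, 1 - 2^{-n}]$ yields a continuous ambient isotopy from $\mathrm{Id}_S$ to $\Phi_\infty$. The image $\Phi_\infty \circ \alpha$ coincides on each $K_n$ with $\Phi_N \circ \alpha$ for any large enough $N$, so it is a smooth immersion transverse to $\Gamma_n$ on $K_n$; since the $K_n$ exhaust $S$ and $\Gamma_n$ exhausts $\Gamma$, this yields transversality to $\Gamma$ everywhere. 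Properness of $\Phi_\infty \circ \alpha$ follows from properness of $\alpha$ together with the fact that $\Phi_\infty$ is a homeomorphism equal to the identity outside any given $K_n$ from stage $n+1$ onward.

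The main obstacle is the inductive smoothing-and-transversality step, and in particular arranging it so that the support of $\phi_{n+1}$ is disjoint from $K_{n-1}$ while still achieving transversality throughout $K_{n+1}$. This requires a careful use of the collar between $K_{n-1}$ and $K_{n+1}$: on the part of $\alpha_n$ that already lies in $K_{n-1}$ nothing must change, while on the (compact) remainder of $\alpha_n \cap K_{n+1}$ we apply standard Thom transversality and realize the result ambiently via isotopy extension. All other ingredients — the compactness of $\alpha \cap K_n$, the finiteness of $\Gamma_n$, and the local finiteness of the supports — then make the limit argument routine.
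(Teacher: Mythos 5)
Your proposal is correct and is a full elaboration of the paper's one-line proof, which simply asserts that an arbitrarily small perturbation of $\alpha$ achieves transversality; your exhaustion-by-compacta argument with locally finite supports is the standard way to make that precise on an infinite type surface, and it matches the style of convergence argument the paper itself uses in Lemma~\ref{lem:minimal_isotopy}. One trivial bookkeeping slip: you take $\mathrm{supp}(\phi_{n+1})$ disjoint only from $K_{n-1}$ yet claim transversality on $K_n$ is preserved because $\phi_{n+1}$ is the identity \emph{there}; either arrange the support to miss $K_n$ or invoke the openness of transversality under $C^1$-small perturbations.
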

\begin{proof}
This requires an arbitrarily small perturbation of $\alpha$.
\end{proof}

\begin{lemma}\label{lem:proper_alpha_locally_finite}
Let $S$ and $\alpha$ be as in Lemma~\ref{lem:isotopy_of_arcs}.
Let $\Gamma$ be a locally finite collection of simple closed geodesic
curves in $S$.  Then $\alpha$ is ambient-isotopic to a proper arc
which intersects each element of $\Gamma$ finitely many times.
\end{lemma}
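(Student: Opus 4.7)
The plan is to reduce the lemma directly to Lemma~\ref{lem:proper_alpha_transverse}: once $\alpha$ is made transverse to the whole locally finite family $\Gamma$, the compactness of each individual closed geodesic $\gamma \in \Gamma$ combined with the properness of $\alpha$ should force $\alpha \cap \gamma$ to be finite for each $\gamma$ separately. Since the conclusion concerns each element of $\Gamma$ independently (not the total intersection, which can certainly be infinite), no further isotopy should be required after transversality is arranged.

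Concretely, I would first invoke Lemma~\ref{lem:proper_alpha_transverse} to replace $\alpha$, by an ambient isotopy, with a smooth proper immersion $\phi\colon (0,1)\to S$ whose image is transverse to every element of $\Gamma$. Fix any $\gamma \in \Gamma$. Because $\gamma$ is a simple closed geodesic, it is a compact subset of $S$, and properness of $\phi$ implies that $\phi^{-1}(\gamma)$ is a compact subset of $(0,1)$. Transversality of $\phi$ to $\gamma$ at every point of $\phi^{-1}(\gamma)$ means that $\phi^{-1}(\gamma)$ is a smooth $0$-dimensional submanifold of $(0,1)$, hence a discrete subset. A compact discrete subset of $(0,1)$ is finite, so $\phi^{-1}(\gamma)$ is finite, and consequently $\alpha \cap \gamma$ consists of at most finitely many points. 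Applying this argument to each $\gamma \in \Gamma$ in turn finishes the proof.

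I do not expect a genuine obstacle in writing this out: the only delicate point is guaranteeing that one can arrange transversality with the full locally finite family at once, and this is exactly what Lemma~\ref{lem:proper_alpha_transverse} provides. Given that lemma, the finiteness statement is a routine consequence of the definition of a proper map together with the fact that a transverse intersection of submanifolds of complementary dimension is a $0$-dimensional submanifold.
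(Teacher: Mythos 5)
Your proposal is correct and is essentially identical to the paper's own proof, which likewise deduces the lemma immediately from Lemma~\ref{lem:proper_alpha_transverse} using that a proper arc transverse to a compact curve meets it only finitely many times. Your write-up merely makes explicit the routine compactness-plus-discreteness argument that the paper leaves implicit.
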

\begin{proof}
This is immediate from Lemma~\ref{lem:proper_alpha_transverse} because
a proper arc which is transverse to a compact curve can only intersect it
finitely many times.
\end{proof}

Given a smooth arc $\alpha$ and a collection $\Gamma$ of simple closed curves in $S$,
a bigon between them is an immersion of a bigon into $S$ whose boundary maps
to two arcs, one in $\alpha$ and one in some element $\gamma$ of $\Gamma$.
We say that $\alpha$ is in minimal position with respect to $\Gamma$ if there
is no bigon between them.  Note that the image of the boundary of any
bigon produces two intervals, one in $\alpha$ and one in some $\gamma \in \Gamma$.
For any point $p \in \alpha$, we can ask how many bigon-intervals contain $p$.

\begin{lemma}\label{lem:proper_alpha_finite_bigons}
Let $S$ and $\alpha$ be as in Lemma~\ref{lem:isotopy_of_arcs}.
Let $\Gamma$ be a locally finite collection of simple closed geodesic
curves in $S$.  Then $\alpha$ is ambient isotopic to an arc $\alpha'$
such that for any point $p \in \alpha'$, there are only
finitely many bigon-intervals containing $p$.
\end{lemma}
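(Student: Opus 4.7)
By Lemma~\ref{lem:proper_alpha_locally_finite} we may assume $\alpha$ is smooth, transverse to every $\gamma \in \Gamma$, and meets each $\gamma$ in finitely many points. My plan is to perform a sequence of finite-type bigon removals along a compact exhaustion of $S$. Fix a compact exhaustion $S = \bigcup_{n \ge 1} K_n$ with each $K_n$ a compact subsurface, $K_n \subset \mathrm{int}(K_{n+1})$, and $\partial K_n$ transverse to both $\alpha$ and to every element of $\Gamma$. Local finiteness of $\Gamma$ implies that $\Gamma_n := \{\gamma \in \Gamma : \gamma \cap K_n \neq \varnothing\}$ is finite for each $n$.

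Inductively, I would construct ambient isotopies $\Phi_n$ of $S$, each supported in $K_n$, carrying $\alpha_{n-1}$ to $\alpha_n$ in minimal position with $\Gamma_n$ inside $K_n$, via the classical innermost-bigon removal procedure in the (finite-type) compact surface $K_n$. To prevent later steps from reintroducing bigons in previously cleaned regions, I would choose each $\Phi_n$ to be the identity in a small neighborhood of $\alpha_{n-1} \cap K_{n-2}$; this is possible because innermost bigon removals can be localized in arbitrarily small neighborhoods of their images. Crucially, since $\gamma \in \Gamma \setminus \Gamma_n$ misses $K_n$, no such intersection is affected by $\Phi_n$. With this support control, each $x \in S$ is fixed by all $\Phi_n$ for $n$ large, so reparametrizing the $\Phi_n$ into time intervals $[1 - 2^{1-n}, 1 - 2^{-n}]$ yields a well-defined continuous ambient isotopy $\Phi : [0,1] \times S \to S$, and I set $\alpha' := \Phi(1, \cdot) \circ \alpha$.

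To verify the conclusion, let $p \in \alpha'$ and pick $N$ with $p \in K_N$. Any bigon-interval through $p$ is part of a bigon $B$ between $\alpha'$ and some $\gamma \in \Gamma$; since $B$'s image is compact, it lies in some $K_M$ with $M \ge N$, and $\gamma \in \Gamma_M$. If $B \subset K_N$ then the inductive construction has removed $B$. Otherwise $B$ straddles $\partial K_N$, so $\gamma$ itself must cross $\partial K_N$; combining local finiteness of $\Gamma$ with the fact that $\alpha'$ has only finitely many intersections with $\partial K_N$ in a neighborhood of $p$ (by properness and transversality) bounds the count of such bigon-intervals through $p$ by a finite number.

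\textbf{The main obstacle} I anticipate is the final counting step: the inductive construction does not immediately rule out bigons that straddle arbitrarily far into the exhaustion while still containing $p$ on their boundary. The resolution must use that $\alpha'$ is simple (so it passes through $p$ only once), together with the local finiteness of $\Gamma$ at every end of $S$, to force the $\gamma$'s that can produce such long bigons through $p$ to form a locally finite family crossing a neighborhood of $p$. A secondary technical concern is verifying that the support-localization condition on each $\Phi_n$ (identity near $\alpha_{n-1} \cap K_{n-2}$) can always be arranged simultaneously with completing innermost-bigon removal inside $K_n$; this requires checking that the bigons one needs to remove at step $n$ are disjoint from the already-stable region, which follows inductively from minimal position of $\alpha_{n-1}$ with $\Gamma_{n-1}$ in $K_{n-1}$.
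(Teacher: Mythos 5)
Your approach is genuinely different from the paper's, and it has a gap that I do not think can be repaired along the lines you sketch. The paper does not attempt any bigon removal at this stage: it observes that after Lemma~\ref{lem:proper_alpha_locally_finite} a bigon-interval is determined by a pair of points of $\alpha \cap \gamma$ lying on a \emph{single} $\gamma \in \Gamma$, and each $\gamma$ meets $\alpha$ only finitely often, so each intersection point can bound only finitely many bigon-intervals. Hence if infinitely many bigon-intervals contained $p$, their endpoints would have to escape out both ends of $\alpha$, producing a nested sequence of bigons exhausting $\alpha$ and hence a null-homotopy, contradicting the hypothesis that $\alpha$ is essential. Your proposal never uses essentialness of $\alpha$, and it is indispensable: for an inessential arc running out an end and back, a locally finite family of geodesics going out that end can bound infinitely many nested bigons all of whose $\alpha$-sides contain a fixed $p$, and no ambient isotopy with domain $[0,1]$ removes them all.

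Two concrete failures in your construction. First, the inductive claim at the end --- that the bigons to be removed at step $n$ are disjoint from the frozen region $K_{n-2}$ because $\alpha_{n-1}$ is in minimal position with $\Gamma_{n-1}$ inside $K_{n-1}$ --- is false: a bigon contained in $K_n$ but not in $K_{n-1}$ can have its $\gamma$-side near $\partial K_n$ while its $\alpha$-side is a long interval running through $K_{n-2}$; the inductive hypothesis only excludes bigons \emph{contained in} $K_{n-1}$. Removing such a bigon forces you to move $\alpha$ inside the region you have declared frozen, so the support control that makes your infinite composition converge is incompatible with actually removing all bigons. Second, the final counting step fails for the same reason: a $\gamma$ forming a bigon whose $\alpha$-side contains $p$ need not come anywhere near $p$ or $K_N$, so local finiteness of $\Gamma$ near $p$ does not bound the number of such bigons; your proposed resolution asserts these $\gamma$ must cross a neighborhood of $p$, and they need not. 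This is exactly the configuration the paper's nested-bigon argument excludes, and the exclusion comes from essentialness, not local finiteness. Note also the logical order in the paper: the present lemma is precisely what guarantees that the infinite product of bigon-removing isotopies in Lemma~\ref{lem:minimal_isotopy} converges, so running that infinite product first, as you propose, is close to circular.
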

\begin{proof}
Consider the set of intersections between $\alpha$ and $\Gamma$.  Because $\Gamma$
is geodesic and $\alpha$ is proper, after acting by an ambient isotopy by
Lemma~\ref{lem:proper_alpha_locally_finite},
we may assume this set of intersections is closed, discrete, and discrete as a 
subset of $\alpha$.  A bigon interval is specified by the pair of intersection
points between $\alpha$ and $\Gamma$ which it runs between.  Note that
this pair of intersection points must lie on a single $\gamma \in \Gamma$.
Each $\gamma \in \Gamma$ can intersect $\alpha$ only finitely many times by
Lemma~\ref{lem:proper_alpha_locally_finite}.  Thus, no intersection between
$\alpha$ and $\gamma$ can serve as an endpoint of infinitely many
bigon-intervals.  This implies that if $p \in \alpha$ is contained in
infinitely many bigon-intervals, then there is a sequence of nested bigons
whose union contains all of $\alpha$.  This produces a null-homotopy, 
which contradicts the assumption that $\alpha$ is not null-homotopic.
\end{proof}

\begin{lemma}\label{lem:minimal_isotopy}
Let $S$ and $\alpha$ be as in Lemma~\ref{lem:isotopy_of_arcs}.
Let $\Gamma$ be a locally finite collection of simple closed geodesic
curves in $S$.  There is an ambient isotopy of $S$ which puts $\alpha$
into minimal position with respect to $\Gamma$.
\end{lemma}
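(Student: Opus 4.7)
The approach I would take is classical bigon removal, organized to handle the potentially infinite bigon family via a local-finiteness argument. First I would apply Lemma~\ref{lem:proper_alpha_finite_bigons} to reduce to an $\alpha$ that is transverse to $\Gamma$, meets each $\gamma \in \Gamma$ in finitely many points, and satisfies the condition that every point of $\alpha$ sits in only finitely many bigon-intervals.

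Next I would define a \emph{clean} bigon between $\alpha$ and some $\gamma \in \Gamma$ to be one whose open interior is disjoint from $\alpha$ and from every element of $\Gamma$, and argue by descent that every bigon contains a clean sub-bigon: an arc of $\alpha$ interior to a bigon yields a strictly smaller bigon (its endpoints are forced to lie on $\gamma$ because $\alpha$ is simple), while an arc of another $\gamma' \in \Gamma$ interior to the bigon must have at least one endpoint on $\alpha$, since $\gamma$ and $\gamma'$ are distinct geodesics and therefore bound no bigon between themselves. The descent terminates thanks to Lemma~\ref{lem:proper_alpha_finite_bigons}. I would then check that the family $\mathcal B$ of clean bigons is locally finite in $S$: any compact $K \subset S$ meets only finitely many curves of $\Gamma$ and only finitely many points of $\alpha \cap \Gamma$, which bounds the available corner pairs for any clean bigon meeting $K$.

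Each clean bigon admits a compactly supported ambient isotopy in a tight neighborhood of its closure, pushing the bigon arc of $\alpha$ across $\gamma$ to remove two intersection points and, by cleanness, creating none. Since distinct clean bigons have disjoint interiors, I would take these supports pairwise disjoint; the locally finite simultaneous composition then yields a well-defined ambient isotopy $\Phi_1$ sending $\alpha$ to an arc $\alpha_1$ with strictly fewer intersections with $\Gamma$ on every compact set meeting a clean bigon. Iterating $\alpha_n = \Phi_n(\alpha_{n-1})$, on any compact subsurface $K$ the finite count $|\alpha_n \cap \Gamma \cap K|$ is weakly decreasing and strictly drops whenever a clean bigon inside $K$ is removed, so it stabilizes after finitely many steps. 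The main obstacle will be splicing the infinite sequence of isotopies $\Phi_n$ into a single ambient isotopy of $S$: I expect this requires a standard diagonal construction, exhausting $S$ by compact subsurfaces $K_1 \subset K_2 \subset \cdots$ and arranging, for each fixed $m$, that only finitely many of the $\Phi_n$ have support meeting $K_m$ (by pushing the supports of later $\Phi_n$ toward the ends of $S$). The composition then stabilizes on every compact set and produces the desired ambient isotopy putting $\alpha$ in minimal position with respect to $\Gamma$.
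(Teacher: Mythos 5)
Your proposal is correct and rests on the same underlying mechanism as the paper's proof: an infinite product of compactly supported bigon-removal isotopies, justified by Lemmas~\ref{lem:proper_alpha_locally_finite} and~\ref{lem:proper_alpha_finite_bigons}, with convergence coming from eventual constancy on compact sets. The difference is purely one of scheduling. You remove all innermost (``clean'') bigons simultaneously in rounds and then iterate, which forces you to confront the splicing problem you flag at the end; your sketch of that step (``pushing the supports of later $\Phi_n$ toward the ends'') is the one vague point, since supports sit where the bigons sit and cannot simply be pushed around. The paper instead orders the removals by a basepoint $p \in \alpha$: it first clears every bigon-interval containing $p$ (undoing the finitely many interior intersections of each such bigon first, then the bigon itself), then moves to the nearest remaining bigon-interval and proceeds outward along $\alpha$. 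This ordering is exactly what makes the convergence nearly automatic: a point of $\alpha$ is moved only when it lies in a bigon-interval, each isotopy that moves it removes one of the finitely many bigon-intervals containing it (Lemma~\ref{lem:proper_alpha_finite_bigons}), so each point of $\alpha$ is eventually never touched again, and properness of $\alpha$ upgrades this to eventual constancy on every compact subset of $S$. If you adopt that ordering, your round-based stabilization argument and the diagonal construction become unnecessary; otherwise you would need to supply the argument that, for each compact $K$, only finitely many rounds contain a clean bigon whose removal disturbs $K$, which is the real content hiding in your last paragraph.
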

\begin{proof}
Choose an arbitrary point $p$ on $\alpha$.  It is contained in finitely
many bigon intervals by Lemma~\ref{lem:proper_alpha_finite_bigons}
(if this number is zero, the next steps are vacuous).  Choose any one of
these intervals.  It corresponds to some bigon $B$.  The $\alpha$ boundary
of $B$ may intersect $\Gamma$, and the $\Gamma$ boundary
may intersect $\alpha$.  However, note these intersections must be homotopically
trivial (because $B$ is a disk).  By Lemma~\ref{lem:proper_alpha_locally_finite},
the boundary of $B$ can contain only finitely many of these intersections.
The interior of $B$ may contain yet more intersections, but by the properness of $\alpha$
there are only finitely many.  All of these (necessarily trivial) intersections
are themselves associated with bigon intervals.  By doing finitely many ambient
isotopies in a tubular neighborhood of $B$, we can undo all these interior bigons and then finally undo $B$.
Note that a point $p' \in \alpha$ can be affected by these isotopies only
if it is contained in a bigon-interval, and each isotopy which affects it
removes a bigon-interval from $p'$.

All this removed (at least) a single bigon-interval from $p$.  We repeat this process
until $p$ is contained in no bigon-intervals.  Next, there is some
closest bigon-interval to $p$.  Choose a point in its interior, and 
repeat indefinitely, proceeding outward from $p$.
This produces an infinite product of ambient isotopies.
We claim this infinite product in fact produces an honest isotopy.  To see
this, consider Lemma~\ref{lem:proper_alpha_finite_bigons}.  Every single
ambient isotopy we apply removes a bigon-interval from $\alpha$, so for
every point in $\alpha$, there is some finite time after which it is not contained
in any bigon-intervals and is thus untouched by further isotopies.
Because $\alpha$ is proper, for any compact set $K \subseteq S$, we know that
$\alpha$ must exit $K$, so there is some finite time after which no ambient isotopy
affects $K$.  That is, this infinite sequence converges (is eventually constant)
on compact sets, and thus gives an isotopy of $S$.
\end{proof}

\begin{proof}[Proof of Lemma~\ref{lem:isotopy_of_arcs}]
Apply Lemma~\ref{lem:minimal_isotopy} to put $\alpha$ into minimal
position with respect to the cuffs of the pants decomposition.
Because $\alpha$ is proper and non null-homotopic, a lift of it has
two endpoints on the boundary of a universal cover, and there is a 
unique geodesic $\tilde\beta$ between those endpoints.  Then $\tilde\beta$
projects to a geodesic $\beta$ on $S$.  Because $\alpha$ is in minimal position
and has the same endpoints as $\beta$, they must cross the same cuffs
in the same order.  We can apply local ambient isotopies to simultaneously
make all intersections between $\alpha$ and the cuffs identical to the
intersections between $\beta$ and the cuffs.  Then, we apply
Theorem 3.1 of Epstein~\cite{Epstein} within each pair of pants
simultaneously to make $\alpha$ and $\beta$ coincide.
\end{proof}


\begin{thebibliography}{alpha}


\bibitem{Aramayona-F-P}
	Javier Aramayona, Ariadna Fossas, and Hugo Parlier
	\emph{Arc and curve graphs for infinite-type surfaces}, 
	Proc. Amer. Math. Soc. 145 (2017), no. 11, 4995--5006

\bibitem{Aramayona-V}
Javier Aramayona and Ferr\'an Valdez,
\emph{On the geometry of graphs associated to infinite-type surfaces}, arXiv:1605.05600

\bibitem{band_boyland}
Gavin Band and Philip Boyland
\emph{The Burau estimate for the entropy of a braid}
Algebr. Geom. Topol.
Volume 7, Number 3 (2007), 1345-1378.


\bibitem{Juliette}
	Juliette Bavard, 
	\emph{Hyperbolicit\'{e} du graphe des rayons et quasi-morphismes sur un gros groupe modulaire}, 
	Geom. Topol. {\bf 20}, no. 1, (2016) 491--535
	
\bibitem{Juliette-Anthony}
	Juliette Bavard and Anthony Genevois, 
	\emph{Big mapping class groups are not acylindrically hyperbolic}, 
	Math. Slovaca {68} (2018), no. 1, 71--76.
	
	
\bibitem{boundary}
	Juliette Bavard and Alden Walker,
	\emph{The Gromov boundary of the ray graph}, arXiv:1608.04475, to appear in Transactions of the AMS.
	
	
\bibitem{Bestvina-Fujiwara}	
	Mladen Bestvina and Koji Fujiwara
	\emph{Bounded cohomology of subgroups of mapping class groups}, Geom. Topol. {\bf 6} (2002), 69--89. 


\bibitem{Calegari-blog}
	Danny Calegari,
	\emph{Big mapping class groups and dynamics},\\
	\texttt{https://lamington.wordpress.com/2009/06/22/big-mapping-class-groups-and-dynamics/}

\bibitem{Calegari-circular}
	Danny Calegari, \emph{Circular groups, planar groups, and the Euler class} Geom. Topol. Mon. 7 (2004), 431-491
	
\bibitem{Durham-F-V}
	Matthew Gentry Durham, Federica Fanoni, and Nicholas G. Vlamis
	\emph{Graphs of curves on infinite-type surfaces with mapping class group actions}, arXiv:1611.00841
	
\bibitem{Epstein}
       D. B. A. Epstein, 
  \emph{Curves on $2$-manifolds and isotopies},
{Acta Math.}, {\bf 115}, (1966), {83--107}.


\bibitem{Hensel-Przytycki-Webb}
	Sebastian Hensel, Piotr Przytycki and Richard Webb,
	\emph{1-slim triangles and uniform hyperbolicity for arc graphs and curve graphs}, J. Eur. Math. Soc. (JEMS), {\bf{17}} (2015) 755--762

\bibitem{Masur-Minsky}
	Howard A. Masur and Yair N. Minsky,
	\emph{Geometry of the complex of curves. II. Hierarchical structure},
	Geom. Funct. Anal., 10(4):90--974  (2000).


\bibitem{Pho-On}
	Witsarut Pho-on, \emph{Infinite Unicorn Paths and Gromov Boundaries}, 
	Groups Geom. Dyn. 11 (2017), no. 1, 353--370.

\bibitem{Rasmussen}
Alexander J. Rasmussen,
\emph{Uniform hyperbolicity of the graphs of nonseparating curves via bicorn curves}, arXiv:1707.08283

\bibitem{Richards}
	I. Richards, 
	\emph{On the classification of noncompact surfaces}, 
	Trans. Amer. Math. Soc. {\bf 106} (1963) 259--269

\bibitem{Schleimer}
	Saul Schleimer,
	\emph{Notes on the curve complex}.\\
	\texttt{http://homepages.warwick.ac.uk/~masgar/Maths/notes.pdf}	
	

\end{thebibliography}
\end{document}